\documentclass{article}
\usepackage[british]{babel}
\usepackage{amsmath,amssymb,xcolor,latexsym,theorem,bbm}
\usepackage{xcolor}
\usepackage{epigraph}
\definecolor{blue}{rgb}{0,0,1}
\definecolor{red}{rgb}{1,0,0}
\def\red{\begin{color}{red}}
\def\ered{\end{color}}
\usepackage{times}
\usepackage[textsize=footnotesize,color=blue!40, bordercolor=white]{todonotes}

\usepackage{lineno}
\usepackage[pagebackref,bookmarks]{hyperref} 
\hypersetup{pagebackref=true,bookmarks=true,
       hyperfigures=true,
	unicode=true,
	colorlinks=true,
	citecolor=blue,
	linkcolor=blue,
	anchorcolor=red
}

\newcommand{\textdots}{...}
\newcommand{\tmcolor}[2]{{\color{#1}{#2}}}
\newcommand{\tmem}[1]{{\em #1\/}}

\newcommand{\tmop}[1]{\ensuremath{\operatorname{#1}}}
\newcommand{\tmrsub}[1]{\ensuremath{_{\textrm{#1}}}}
\newcommand{\tmrsup}[1]{\textsuperscript{#1}}

\newcommand{\tmtextbf}[1]{\text{{\bfseries{#1}}}}
\newcommand{\tmtextit}[1]{\text{{\itshape{#1}}}}
\providecommand{\qed}{{QED}}

\newtheorem{lemma}{\sc{Lemma}}
\numberwithin{lemma}{section}
\newtheorem{corollary}[lemma]{\sc{Corollary}}
\newtheorem{definition}[lemma]{\sc Definition}
{\theorembodyfont{\rmfamily}}
{\theorembodyfont{\rmfamily\small}}
\numberwithin{exercise}{section}

{\theorembodyfont{\rmfamily}}
{\theorembodyfont{\rmfamily}}
\newtheorem{theorem}[lemma]{\sc Theorem}

\def\ed{\end{document}}

\def\me{{\mathsf{E}}}

\def\mi{{\mathsf{I}}}
\def\mj{{\mathsf{J}}}

\def\mo{{\mathsf{o}}}
\def\ed{\end{document}}
\renewcommand{\qed}{\hfill QED}

\begin{document}


\newcommand{\hi}{version Sep 23 15a)}
\newcommand{\abo}[2]{\ensuremath{#1 <_{\mathcal{O}}   #2}}
\newcommand{\calo}{ {\mathcal{O}}}
\newcommand{\calm}{\ensuremath{\mathcal{M}} }
\newcommand{\cali}{\ensuremath{\mathcal{I}} }
\newcommand{\calj}{{\mathcal{J}}}
\newcommand{\calk}{ {\mathcal{K}}}
\newcommand{\call}{ {\mathcal{L}}}
\newcommand{\calc}{C}
\newcommand{\gott}{\ensuremath{\mathfrak{T}}}
\newcommand{\GRP}{\ensuremath{(\ensuremath{\operatorname{GRP}}_{0} )}}
\newcommand{\LST}{\ensuremath{\call_{\dot{\in}}}}
\newcommand{\ent}{{{\em Entscheidungsproblem\/}}}
\newcommand{\bara}{\bar{{\alpha}}}
\newcommand{\bp}{\bar{{\pi}}}
\newcommand{\ts}{\ensuremath{T^{\ast}}}
\newcommand{\tsptwo}{\left.{\emp}\right) \
\ensuremath{_{\textrm{p\tmrsub{2}}}}}
\newcommand{\tptwo}{\ensuremath{T^{\ast} ( \varnothing )_{p_{2}}}}
\newcommand{\pe}{\ensuremath{P^{\ensuremath{\operatorname{\mathsf{eJ}}}}_{e}}}
\newcommand{\tmt}[1]{\ensuremath{\mathbbm{T}^{#1}}}
\newcommand{\p}[1]{\ensuremath{P^{\ensuremath{\operatorname{\mathsf{eJ}}}}_{#1}}}
\newcommand{\zx}[1]{\ensuremath{{^{#1}\zeta}}}
\newcommand{\Mx}[1]{\ensuremath{{^{#1} \!M}}}
\newcommand{\sx}[1]{\ensuremath{{^{#1} \Sigma}}}
\newcommand{\tx}[1]{\ensuremath{^{#1} T}}
\newcommand{\cst}{\ensuremath{\macro{C^{\ast}}}}
\newcommand{\tmcr}{\ensuremath{\curvearrowright}}
\newcommand{\cl}{\ensuremath{\curvearrowleft}}
\newcommand{\ou}{{\"o}}
\newcommand{\au}{{\"a}}
\newcommand{\uu}{{\"u}}
\newcommand{\go}{G{\"o}del}
\newcommand{\phiav}[1]{\ensuremath{\Phi^{\alpha}_{#1}}}
\newcommand{\phiaf}{\ensuremath{\Phi_{\ensuremath{\operatorname{fp}}}^{\alpha}}}
\newcommand{\phiabp}[2]{\ensuremath{\Phi_{\beta +1}^{(  \alpha )}}}
\newcommand{\phiab}[2]{\ensuremath{\Phi_{#2}^{#1}}}
\newcommand{\phivf}[1]{\ensuremath{\Phi_{\ensuremath{\operatorname{fv}}}^{#1}}}
\newcommand{\rar}{{\rightarrow}}
\newcommand{\midd}{ {\mid} }
\newcommand{\con}{{\hspace{-0.03em}\smallfrown \hspace{-0.03em}}}
\newcommand{\lt}{{\leq}\ensuremath{_{\textrm{T}}}}
\newcommand{\qu}{Q}
\newcommand{\pb}{\ensuremath{\Psi_{\beta}}}
\newcommand{\oy}{<\ensuremath{_{\textrm{y\tmrsub{0}}}}}
\newcommand{\oyr}{\ensuremath{>_{y_{0}}}}
\newcommand{\oye}{{\leq}\ensuremath{_{\textrm{y\tmrsub{0}}}}}
\newcommand{\ptwo}{{\mbox{{\em II\/}}}}
\newcommand{\pone}{{\mbox{{\em I\/}}}}
\newcommand{\uhr}{{\upharpoonright}}
\newcommand{\2}{{{\em II\/}}}
\newcommand{\1}{{{\em I\/}}}
\newcommand{\wff}{\ensuremath{\ensuremath{\operatorname{wff}}(y_{0} )}}
\newcommand{\nod}{{\noindent}}
\newcommand{\diam}{\ensuremath{}{\diamond}}
\newcommand{\lalp}{ {\pa{L\ensuremath{_{\textrm{{\alpha}}}},{\in}}}}
\newcommand{\cit}{{\macro{}{{\cite{\arg{k}}}}}}
\newcommand{\ul}{\ensuremath{\ulcorner}}
\newcommand{\ur}{\ensuremath{\urcorner}}
\newcommand{\cs}{\ensuremath{\ul \sigma \ur}}
\newcommand{\ct}{\ensuremath{\ul \tau \ur}}
\newcommand{\cso}{\ensuremath{\ul \sigma_{0} \ur}}
\newcommand{\co}[1]{\ensuremath{\ulcorner #1 \urcorner}}
\newcommand{\gp}[1]{\ensuremath{\prec #1 \succ}}
\newcommand{\tmfi}{\ensuremath{\ensuremath{\operatorname{Field}}( \prec )}}
\newcommand{\al}{\ensuremath{\aleph}}
\newcommand{\alo}{\ensuremath{\aleph_{\omega}}}
\newcommand{\cub}{c.u.b.}
\newcommand{\dff}{\ensuremath{\ensuremath{\operatorname{Def}}}}
\newcommand{\bu}{\ensuremath{\bullet}}
\newcommand{\nat}{\ensuremath{\mathbbm{N}}}
\newcommand{\re}{\ensuremath{\mathbbm{R}}}
\newcommand{\baire}{\ensuremath{\mathbbm{N}}\textsuperscript{\ensuremath{\mathbbm{N}}}}
\newcommand{\bai}{\textsuperscript{\ensuremath{\omega}}\ensuremath{\omega}}
\newcommand{\linf}{{\leq}\ensuremath{_{\textrm{{\infty}}}}}
\newcommand{\linfs}{<\ensuremath{_{\textrm{\text{{\tmstrong{\ensuremath{\infty}}}}}}}}
\newcommand{\leinf}{{\leq}\ensuremath{_{\textrm{\text{{\tmstrong{\ensuremath{\infty}}}}}}}}
\newcommand{\cant}{2\textsuperscript{\ensuremath{\mathbbm{N}}}}
\newcommand{\can}{\textsuperscript{\ensuremath{\omega}}2}
\newcommand{\rest}{{\upharpoonright}}
\newcommand{\emp}{{\varnothing}}
\newcommand{\pa}[1]{\ensuremath{\langle #1 \rangle}}
\newcommand{\wst}{\ensuremath{S}}
\newcommand{\ws}[2]{\ensuremath{S}\textsuperscript{#1}\ensuremath{_{\textrm{#2}}}}
\newcommand{\wsone}[1]{\ensuremath{S}\textsuperscript{1}\ensuremath{_{\textrm{#1}}}}
\newcommand{\wstwo}[1]{\ensuremath{S}\textsuperscript{2}\ensuremath{_{\textrm{#1}}}}
\newcommand{\wthree}[1]{\ensuremath{S}\textsuperscript{3}\ensuremath{_{\textrm{#1}}}}
\newcommand{\es}[2]{\ensuremath{E}\textsuperscript{#1}\ensuremath{_{\textrm{#2}}}}
\newcommand{\esone}[1]{\ensuremath{E}\textsuperscript{1}\ensuremath{_{\textrm{#1}}}}
\newcommand{\estwo}[1]{\ensuremath{E}\textsuperscript{2}\ensuremath{_{\textrm{#1}}}}
\newcommand{\wet}{\ensuremath{E}}
\newcommand{\game}{\ensuremath{\Game}}
\newcommand{\games}{\ensuremath{\Game \Sigma^{0}_{3}}}
\newcommand{\szero}{\ensuremath{\Sigma^{0}_{3}}}
\newcommand{\hright}{{\hookrightarrow}}

\newcommand{\callt}{L\ensuremath{_{\textrm{T}}}}
\newcommand{\calt}{T}
\newcommand{\cald}{D}
\newcommand{\calu}{\ensuremath{\mathcal{U}}}
\newcommand{\lst}{\ensuremath{\mathcal{L}_{\dot{\in}}}}
\newcommand{\calw}{W}
\newcommand{\calv}{V}
\newcommand{\vp}{\ensuremath{\varphi}}
\newcommand{\om}[1]{\ensuremath{\omega_{1} ( #1 ) }}
\newcommand{\da}{{\downarrow}}
\newcommand{\ua}{{\uparrow}}
\newcommand{\la}{{\langle}}
\newcommand{\ra}{{\rangle}}
\newcommand{\ran}{\text{ran}}
\newcommand{\dom}{\text{dom}}
\newcommand{\back}{{\backslash}}
\newcommand{\power}{{\mathcal{P}}}
\newcommand{\mar}{{\marginpar{\ensuremath{\rightarrow}}}}
\newcommand{\bm}{{\boldmath}}
\newcommand{\opistol}{\text{\ensuremath{O^{\pistol}}}}
\newcommand{\ie}{{\itshape{i.e.}}{\hspace{0.25em}}}
\newcommand{\eg}{{\itshape{e.g.}}{\hspace{0.25em}}}
\newcommand{\etc}{{\itshape{etc.}}{\hspace{0.25em}}}
\newcommand{\via}{{\itshape{via }}{\hspace{0.25em}}}
\newcommand{\cf}{{\itshape{cf. }}{\hspace{0.1em}}}
\newcommand{\mm}{{\itshape{m.m.}}{\hspace{0.25em}}}
\newcommand{\an}{{\wedge}}
\newcommand{\bigconj}{\ensuremath{\bigwedge \hspace{-1.0em} \bigwedge}}
\newcommand{\tmk}{\text{``}}
\newcommand{\proves}{{\vdash}}
\newcommand{\all}{{\forall}}
\newcommand{\Equi}{{\,\Longleftrightarrow\,}}
\newcommand{\equi}{{\,\longleftrightarrow\,}}
\newcommand{\ex}{{\exists}}
\newcommand{\Imp}{{\,\Rightarrow\,}}
\newcommand{\sset}{ { \,\subseteq\, } }
\newcommand{\lr}{{\leftrightarrow}}
\newcommand{\imp}{{\,\longrightarrow\,}}
\newcommand{\ri}{{\,\rightarrow\,}}
\newcommand{\wt}{\tilde{\}}}
\newcommand{\finold}{\ensuremath{[ \alpha ]^{< \omega}}}
\newcommand{\rem}{{\noindent}{\bfseries{Remark: }}}
\newcommand{\fin}[1]{\ensuremath{[  ]^{< \omega}}}
\newcommand{\dlimtwo}{\ensuremath{\raisebox{-1.25\ensuremath{\operatorname{ex}}}{\overset{Lim}{\longrightarrow}
\hspace{0.25em}}}}
\newcommand{\dlim}{\text{{\raisebox{-1.25ex}{\ensuremath{\overset{Lim}{\longrightarrow}
\hspace{0.25em}}}}}}
\newcommand{\qedtwo}[1]{\hfill Q.E.D.(#1)}
\newcommand{\pf}{{\noindent}{\textbf{Proof: }}}
\newcommand{\dfs}{\ensuremath{=_{\ensuremath{\operatorname{df}}}}}
\newcommand{\edfs}{\ensuremath{  \Equi_{\ensuremath{\operatorname{df}}}  }}
\newcommand{\pr}{\ensuremath{\prec}}
\newcommand{\pre}{\ensuremath{\preceq}}
\newcommand{\js}{\ensuremath{J_{s}}}
\newcommand{\jsb}{\ensuremath{J_{\bar{s}}}}
\newcommand{\jst}[3]{ \ensuremath{\langle J^{#1}_{#2} , #3 \rangle}}
\newcommand{\jnb}{J\ensuremath{_{\textrm{\bar{{\nu}}}}}}
\newcommand{\jn}{J\ensuremath{_{\textrm{{\nu}}}}}
\newcommand{\lam}[2]{\ensuremath{\Lambda ( #1 , #2 )}}
\newcommand{\nb}{\ensuremath{\bar{\nu}}}
\newcommand{\nbn}{\ensuremath{\bar{\nu} \prec \nu}}
\newcommand{\ns}{{\nu}\ensuremath{_{\textrm{s}}}}
\newcommand{\tmsb}{\ensuremath{\bar{s}}}
\newcommand{\nsb}{\ensuremath{\nu_{\bar{s}}}}
\newcommand{\maps}{\ensuremath{f: \bar{s} \Longrightarrow s}}
\newcommand{\mapnr}{\ensuremath{f \Longrightarrow \nu}}
\newcommand{\mapn}{\ensuremath{f: \bar{\nu} \Longrightarrow \nu}}
\newcommand{\mapg}[3]{\ensuremath{#1 : \overline{#2} \Longrightarrow #3}}
\newcommand{\onektuple}[1]{\ensuremath{#1_{1} , \ldots ,  #1_{k}}}
\newcommand{\zeroktuple}[1]{\ensuremath{#1_{0} , \ldots  , #1_{k}}}
\newcommand{\onentuple}[1]{\ensuremath{#1_{1} , \ldots  , #1_{n}}}
\newcommand{\zerontuple}[1]{\ensuremath{#1_{0} , \ldots  , #1_{n}}}
\newcommand{\ntuple}[2]{\ensuremath{#1_{1} , \ldots  , #1_{#2}}}
\newcommand{\otuple}[2]{\ensuremath{#1_{0} , \ldots  , #1_{#2}}}
\newcommand{\signk}{\ensuremath{\Sigma_{k}^{(n)}}}
\newcommand{\signo}{\ensuremath{\Sigma_{1}^{(n)}}}
\newcommand{\bsigno}{\ensuremath{\ensuremath{\boldsymbol{\Sigma}}_{1}^{(n)}}}
\newcommand{\bsigmo}{\ensuremath{\ensuremath{\boldsymbol{\Sigma}}_{1}^{(m)}}}
\newcommand{\sigmk}[1]{\ensuremath{\Sigma_{#1}^{(n)}}}
\newcommand{\sigmnk}[2]{\ensuremath{\Sigma_{#2}^{( #1 )}}}
\newcommand{\sigms}{ \ensuremath{\Sigma^{\ast}}}
\newcommand{\bsigms}{{\textbf{ \ensuremath{\Sigma^{\ast}}}}}
\newcommand{\lnn}{ \ensuremath{\Lambda (q, \nu )}}
\newcommand{\lns}{ \ensuremath{\Lambda (q,s)}}
\newcommand{\lnsp}{ \ensuremath{\Lambda^{+} (q,s)}}
\newcommand{\lnsl}{ \ensuremath{\Lambda (q,s| \lambda )}}
\newcommand{\fb}[1]{\ensuremath{\lambda (f_{( #1 ,q, \nu )} )}}
\newcommand{\fbb}[2]{\ensuremath{\lambda (f_{( #1 ,q,  #2 )} )}}
\newcommand{\fbs}[1]{\ensuremath{\lambda (f_{( #1 ,q,s)} )}}
\newcommand{\lis}{\ensuremath{l^{i}_{s}}}
\newcommand{\lhs}{\ensuremath{l^{i}_{\eta s}}}
\newcommand{\lls}{\ensuremath{l^{i}_{\lambda s}}}
\newcommand{\llf}{\ensuremath{\lambda = \lambda (f)}}
\newcommand{\csp}{C\ensuremath{_{\textrm{s}}}\textsuperscript{+}}
\newcommand{\csl}{C\ensuremath{_{\textrm{s|{\lambda}}}}}
\newcommand{\rn}{ \ensuremath{\rho^{n+1}_{M} > \kappa}}
\newcommand{\rhn}[1]{{\rho}\textsuperscript{#1}\ensuremath{_{\textrm{M}}}}
\newcommand{\rhnm}[2]{{\rho}\textsuperscript{#1}\ensuremath{_{\textrm{#2}}}}
\newcommand{\eqn}{{\omega}{\rho}\textsuperscript{n+1}\ensuremath{_{\textrm{M}}}{\leq}{\kappa}<{\omega}{\rho}\textsuperscript{n}\ensuremath{_{\textrm{M}}}}
\newcommand{\hnm}{H\textsuperscript{n}\ensuremath{_{\textrm{M}}}}
\newcommand{\oddpagetext}[1]{\newcommand{\pageoddheader}{{\small }}}
\newcommand{\evenpagetext}[1]{\newcommand{\pageevenheader}{{\small }}}



\title{\large  A Turing machine model for Kleene Type 2 recursion}
\author{\normalsize P.~D. Welch\\ \small{School of Mathematics,}\
\small{University of Bristol}}
\date{\small\today}
\maketitle
\abstract{We give an account of Kleene's Type 2 recursion theory modelled on Turing machines.  We apply this account to observe that the feedback computation of \cite{AFL2020} is an example of Kleene Recursion in $^2\mathsf{E}$. An application of Gandy Selection in the feedback setting solves questions there raised on uniformly finding indices for recursive unions {\em etc.}  of feedback semi-computable sets; further it allows for effective choice and other principles. \footnote{{\em Key Words and Phrases}: Higher type recursion, Turing machine, Gandy Selection, feedback  computation.
{\em MSC2020 Mathematics Subject Classification:} 03D65, 03D10, 03D75, 68Q04, 68Q10.
}

\epigraph{Dedicated to the memory of \linebreak Robin Oliver Gandy (1919-1995)}

\section{\normalsize Introduction}

\subsection{\normalsize Type 2 Kleene Recursion}

The purpose of this paper is to give a brief overview of this kind of recursion in a {\tmem{type 2
functional}}, that is some {\em total } $\mathsf{I}:\,^{\omega} \omega \imp \omega$.  This will be done in a Turing machine theoretic setting, as opposed to the equational calculus setting that was favoured in the 1960s and 1970s.
Kleene introduced recursion for all type-n functionals in a series of papers, but here we shall stick to type 2.  

We then relate the notion of {\em feedback computation} of \cite{AFL2020}  to Kleene's type 2 recursion in $\me = \mathsf{^2 \mathsf{E}}$, (see Def. \ref{DefE} below) showing that the feedback computation is a special case of Kleene recursion in $\me$. But also, that Kleene recursion in $\me$ can, in return, be modelled as feedback computations. It should be remarked that Kleene Recursion was of course much wider, being intended to be applicable for any functional $\mathsf{I} :^{\omega}\!\! \omega \imp \omega$, rather than just $\me$.

The {\em loci classici} for Kleene's  higher type recursion were his papers  {\cite{Kl59}} and {\cite{Kl63}},  which chronologically bookended two further papers {\cite{Kl62b}}, {\cite{Kl62a}} . The papers {\cite{Kl59}} and
{\cite{Kl63}} exposited an equational based theory of generalised recursion in
all finite types, and it was these and their definitions that were mostly
taken up by researchers in the following years, notably Gandy, Grilliot, Harrington, Normann, Moschovakis,
{\tmem{et al.}} In the intermediate papers \ {\cite{Kl62b}}
, {\cite{Kl62a}} Kleene gave an alternative Turing machine based picture of
these finite type recursions and showed that the classes of
functions computed by the equational and machine conceptions to be the same. Kleene states that he was trying to show  the naturalness of his definitions by finding
equivalent models leading to the same computation classes, \via Turing machines
and \via equational calculi, just as Turing had done for his machines. However Kleene's
Turing machine picture did not gain much prominence.

Here in this note everything is restricted to Type 2 and their functionals
such as $\mathsf{I}$ above. A very readable exposition of the equational
calculus version can be found in Hinman, Part VI of {\cite{Hi78}}. Kleene
showed the many basic properties that Type-2 recursion in a simple functional
$\mathsf{E}$ enjoyed: the sets Kleene recursive in $\mathsf{E}$ coincided with
the hyperarithmetic sets, the Kleene semi-recursive in $\mathsf{E}$ sets were
precisely the $\Pi^1_1$ sets, and a complete Kleene semi-recursive set of integers
was therefore a complete $\Pi^1_1$ set. The picture was presumably most
satisfying. These developments, as well as later work, can all be found in
Hinman.

The equational approach can seem forbidding in its austerity. Gandy acknowledges this in his \cite{Ga67}. He suggests that much of the development can be regarded as plainly part of ordinary recursion theory, and indeed suggests a more perspicuous approach for the reader would be to use the Register Machines of Shepherdson and Sturgess   
\cite{SheStu63}. He proceeds there to outline how this would look - again performing this for all finite types.

Here we sketch a Turing machine equivalent for type 2 alone. This should really be read as a version of Kleene's model from \cite{Kl62b}, shorn of all the typing variables and of any generality.  The basic picture here is  that
of a Turing machine which can consult an oracle in the form of a type 2
functional such as $\mathsf{I}$ above. The obvious question is ``how?'' given
that by its nature a Turing machine is a finitary object and only deals in a
finite amount of information. (The same question arises in a finitary function
equational calculus. Kleene, as we do here, posited that the objects for the computation all ``lay outside" the machine: there are no registers containing infinitary objects.  These objects are consulted as oracles.) The main idea is that the $e$'th procedure, or
computation recursive in a type 2 functional $\mathsf{I}$, $P_e^{\mathsf{I}} ( \vec{m}, \vec X)$ (with inputs $\vec{m} \in \, ^{< \omega} \omega$ and $\vec X
\in \, ^{< \omega} (^{\omega} \omega)$) is  just as in ordinary
Turing recursion with real oracle(s) $\vec X$. (We are going to simply
ignore all niceties about the differences between elements of $^{\omega}
\omega$ and $^{\omega} 2$: we leave it to the reader to choose their favourite
way of computably coding integers, and sequences thereof, into elements of
\tmrsup{$\omega$}2 and shall just speak of machines acting on integers rather
than just elements of $2.$)  However the essential feature is that a {\em query} or {\em  subcomputation call}, of the
form $? Q ( e', \vec{m}', \vec X ) ?$ can be made at any stage of the computation (with
integers $e', \vec m'$ being passed to the subcomputation, the oracle $\vec X$ being
fixed throughout). This will have the effect of asking for an
$\omega$-sequence of computations $P_{e'}^{\mathsf{I}}$($\vec m',
\vec X$, $k$) $\downarrow y (k)$ to be made.

If all of these computations converge for $k = 0, 1, 2, \ldots$ we have a real
$y = (y (0), y (1) \ldots)$ which can be presented to the oracle $\mathsf{I}$.
It is the integer value $\mathsf{I} (y)$ that is finally returned to the
master computation which can then proceed with it. We regard this pause for
the Query $Q$ and to receive its response, as just one step, or stage in the calling
master computation. It is convenient to arrange a plan of the whole
computation as a downwards growing tree $\frak{T} =\mathfrak{T}^{\mathsf{I}} (e,
\vec m, \vec X)$ with the master computation at the topmost node,
which infinitely splits downwards into the $\omega$ nodes calculating
$P_{e'}^{\mathsf{I}}$($\vec m', \vec X$, $k$) below it, each tree $\mathfrak{T}^{\mathsf{I}} (e',
(\vec m',k), \vec X)$ being naturally a subtree of $\frak{T}$.

The master computation $P_e^{\mathsf{I}} (\vec m, \vec X)$ can fail
to converge with an output for several reasons: a subcomputation call may
simply not return a value $\mathsf{I} (y)$. Either because some
$P_{e'}^{\mathsf{I}}$($\vec m', \vec X$, $k$) has {\tmem{diverged}},
$P_{e'}^{\mathsf{I}}$($\vec m', \vec X$, $k$){$\uparrow$}, because the
Turing machine running at this node does not halt but runs through infinitely
many stages. Then there is no value $y (k)$ and hence no $y$ that can be
submitted to the oracle $\mathsf{I}$. Or, it could be that because the tree $\frak{T}^\mi(e,\vec m,\vec X)$ is
illfounded: a subcomputation call may have an infinite descending chain of
subcomputation calls, or queries, below it on the tree, each waiting for some information to be passed
up from a lower subcomputation so that it might continue. Finally it could be because the master computation itself may have its
node at the top of a wellfounded tree, but may simply run through $\omega$
stages without halting. Kleene showed that the convergent computations
$P_e^{\mathsf{E}} (\vec m, \vec X)$ all had their trees
$\mathfrak{T}^{\mathsf{E}} (e, \vec m, \vec X)$ as wellfounded
elements of $L_{\omega_1^{\tmop{ck}, \vec X}} [\vec X]$, that is
$\Delta^1_1 (\vec X)$, and thus of tree rank less than
$\omega_1^{\tmop{ck}, \vec X}$. 

Here $\mathsf{E}$ is the type 2 existential number quantification operator:
\begin{definition}
  \label{DefE} (The functional $ \mathsf{^2 \mathsf{E}}$) \
  (i)  We define $\mathsf{E} = \mathsf{^2 E}:  \bai \imp 2$
  by:
  \[ {\mathsf{E}} (X) = \left\{ \begin{array}{ll}
       0 & \text{if } X \in \: \bai \wedge \ex n X (n) = 0\\
       1 & \text{} \text{if } X \in \: \bai \wedge \all n X (n) \neq 0.
     \end{array} \right. \]
\end{definition}

Kleene called operators $\mj$ in which $\me$ was Kleene recursive (``$\me \leq \mj$") {\em normal}, as this ensured good behaviour of computations. We shall consider only normal operators here.

We can thus give a ranking of successful
computations by the ordinal rank of these trees. We prefer to give an equivalent
computation rank through the definition of a monotone operator that builds up
the class of successful computations in a functional $\mathsf{I}$. This will
be done uniformly in $\mathsf{I}$ in Section \ref{GSFC}.

The advantages of the Turing machine model are apparent when it comes to proving some of the basic theorems. Our proof of the Stage Comparison Theorem below is relatively brief compared to that dealing with the ``16 cases plus two `otherwise' cases" of \cite{Hi78} p.286. Further, substitution theorems come to be  straightforward with a Turing machine style argument. We have in any case avoided excessive formalism when laying down the basic definitions, and only given relatively free descriptions of some arguments.

\subsection{\normalsize Feedback Computation }

In {\cite{AFL2020}} (see also {\cite{AFL2019}} and {\cite{AFL2015}}) the
authors introduce the concept of {\tmem{Feedback (Turing) Computation}}. The
key point is that during a feedback Turing computation a special oracle can be
consulted which answers queries of the form: $? Q${\tmem{ Does the Turing
computation $\{ e \}^X (m) $ converge or diverge?}} and receive a $0 / 1$
answer in the form of $\uparrow$ or ${\downarrow}$. Here $X \in \:^{\omega} 2$
is the usual oracle for computability relative to the set/oracle $X$.
Consequently such a computation can formulate queries about computations it
wishes to simulate and know convergence/divergence facts about them. But the
real point is that the queries are really to sub-computational processes with the same
architecture: $? Q${\tmem{ Does the {\tmem{feedback}} Turing computation $\la
e \ra^X (m) $ converge or diverge?}} (They use the angle bracket notation
$\pa{e}^X$ for feedback computations in $X$.) Again, as for Kleene recursion, the machine itself contains no infinitary objects: but here, and this is an important point, the query can be thought of as asking whether a certain course-of-computation is finite or not, and receiving an answer.

We may envisage the query as calling the subcomputation $\la e \ra^X (m)$ and
running the subcomputation, which in turn may call further subcomputations of
itself{\textdots}  Consequently such a feedback computation results (when
successful) in a wellfounded tree of subcomputations. The authors show that
there is the very close connection with hyperarithmetic and $\Pi^1_1$ sets.
The former end up being the feedback computable and the latter the feedback
semi-computable sets (\cite{AFL2020}, Prop. 3.5). The wellfounded trees arising for successful
computations of this form, are all hyperarithmetic and so have some (ordinary
Turing) computable ordinal rank less than $\omega^{\tmop{ck}}_{1 }$ ({\em op.cit.}, Prop. 2.6).\quad A
computation $\la e \ra^X (m)$ resulting in an illfounded tree is naturally one
in which calls to increasing depth of subcomputations are made along the
rightmost path of that tree. This consists of subcomputations waiting for
$\downarrow / \uparrow$ information to be passed up, and are essentially
{\em frozen} as they term it. (Freezing, or being undefined, is denoted $\la e \ra^X (m) \Uparrow$). If the tree is
wellfounded ($\la e \ra^X (m) \Downarrow$) the computation may nevertheless
still diverge, as the top level master computation may never halt.\\

It is natural to ask about the relationships of this form of computation with
the classical Kleene recursion in $\mathsf{E}$ given the close agreement
between outputs: Kleene showed that the Kleene semi-recursive in the
functional $\mathsf{E}$ sets were precisely the $\Pi^1_1$ sets, with the
Kleene recursive sets being the hyperarithmetic sets, with the same results
for the trees of computations. In fact the agreement is tight. Feedback
recursion can be seen as an example of Kleene type 2 recursion in $\mathsf{E}
:$

\begin{theorem}
  \label{ftok}There is a primitive recursive function $k_1$ so that for any
  $e, m$ and $X$:
  
  {\hspace{10em}}$\pa{e}^X (m) \simeq \{ k_1 (e) \}^{\mathsf{E}} (m, X) .$
\end{theorem}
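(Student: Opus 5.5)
We plan to obtain $k_1$ from the Kleene Recursion Theorem for type-2 recursion in $\me$ (the Turing machine analogue, proved just as in ordinary recursion theory since indices are integers and the machines are finitary). We shall build a primitive recursive function $g$ such that, for any index $z$, $\{g(z,e)\}^{\me}(m,X)$ simulates the feedback machine $\pa{e}^X(m)$ step by step. Ordinary Turing steps and oracle reads of $X$ are copied verbatim. Whenever the feedback machine issues a query $?Q$ ``does $\pa{e'}^X(m')$ converge?'', the simulator resolves it with $\me$, calling $z$ for the recursive feedback queries. The fixed point $z_0$ with $\{z_0\}^{\me}\simeq\{g(z_0,\cdot)\}^{\me}$ then gives $k_1(e)$ as an index for $\{g(z_0,e)\}^{\me}$. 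Since the construction is uniform in $e$, and the recursion theorem's fixed point is obtained by a primitive recursive $s$-$m$-$n$ function, $k_1$ is primitive recursive.

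The key step is resolving a query with $\me$. To answer $?Q(e',m')$ we must decide whether the feedback computation $\pa{e'}^X(m')$ halts. We plan to use an auxiliary index $h(z,e',m')$ computing the function $k\mapsto y(k)$, where $y(k)=0$ if the simulated run of $\pa{e'}^X(m')$ (via $z$) has halted by step $k$, and $y(k)=1$ otherwise. Each $y(k)$ is obtained by running the simulation for $k$ steps, and any nested queries inside that run are themselves answered by recursive $\me$-calls. We then issue the subcomputation call $?Q(h(z,e',m'),\, m',X)?$ and receive $\me(y)$, which is $0$ exactly when the subcomputation converges. This value is passed back as the $\downarrow/\uparrow$ answer. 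Because feedback computations answer queries in $\{0,1\}$, the master simulation continues correctly.

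The correctness argument goes by induction on the rank of the well-founded feedback computation tree, in the $\Downarrow$ cases. If $\pa{e}^X(m)\Downarrow$, every query node has lower rank, so by induction every $y(k)$ is defined and $\me(y)$ gives the correct answer. Hence the Kleene computation tree for $\{k_1(e)\}^{\me}(m,X)$ is well-founded and mirrors the feedback tree, and the simulation halts or runs forever exactly as the feedback machine does, with the same output. Conversely, if $\pa{e}^X(m)\Uparrow$ (frozen), the infinite descending chain of queries yields an infinite descending chain of $\me$-subcomputation calls. The Kleene tree is then ill-founded and the Kleene computation is undefined, so both sides are undefined and $\simeq$ holds. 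For the reverse direction we would induct on the Kleene-side ordinal rank, or equivalently on the monotone operator stages promised for Section \ref{GSFC}.

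We expect the main obstacle to be the subtlety of divergence versus freezing. A feedback computation whose tree is well-founded but whose top level never halts must receive the answer ``$\uparrow$'' from $\me$. This works only because each $y(k)$ is a halting-by-stage-$k$ test: to be total, it must not itself wait on an unbounded search. So we must check that $y(k)$ is computed with only finitely many steps of the simulation, with every nested query inside those steps genuinely convergent (as $\me$-computations) whenever the whole query subtree is well-founded. Arranging the bookkeeping so that the recursion theorem's index $z$ is used consistently at all depths, and verifying the induction on the tree, is where the care goes. The rest is routine coding.
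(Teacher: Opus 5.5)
Your proposal is correct and follows essentially the paper's route. The paper likewise simulates $\pa{e}^X(m)$ step by step in a Kleene machine, and replaces each feedback query by an $\me$-query on the sequence of ``halted by step $k$'' tests for the recursively simulated subcomputation (its index $g_0$ from Lemma~\ref{Kconvergence}). It then closes the self-reference with the Kleene Recursion Theorem and verifies the simulation by induction on feedback rank. Your explicit handling of the frozen case, via ill-foundedness and an induction on Kleene rank, fills in a direction the paper leaves implicit.
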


In fact the converse also holds: Kleene recursions in $\mathsf{E}$ can be
simulated in a uniform way by feedback machines,

\begin{theorem}
  \label{ktof}There is a primitive recursive function $k_0$ so that for any
  $e, m, X$
  
  {\hspace{10em}}$\{ e \}^{\mathsf{E}} (m, X) \simeq \pa{k_0 (e)}^X (m)$.
\end{theorem}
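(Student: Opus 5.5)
We prove Theorem \ref{ktof} by the recursion theorem for feedback machines, that is, Kleene's second recursion theorem, which holds for feedback computation because feedback programs are ordinary Turing programs with an extra query instruction. We first build a primitive recursive map $e \mapsto h(e,c)$: given $e$ and a parameter $c$ (standing for the index of the simulator itself), the program $h(e,c)$ runs the Turing program $e$ step by step on $(m,X)$. It reads $X$ through the ordinary oracle tape, and it copies every ordinary instruction verbatim. The only instruction needing translation is a Kleene query $?Q(e',\vec m',X)?$, which asks for $\mathsf{E}(y)$ with $y(k)=\{e'\}^{\mathsf E}(\vec m',X,k)$. We then take $k_0(e)$ to be a fixed point with $\pa{k_0(e)}^X(m)\simeq\pa{h(e,k_0(e))}^X(m)$, obtained primitive recursively and uniformly in $e$ by the s-m-n construction.

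The query is handled by a single feedback subcomputation. We let $d=d(e',c)$ be the index of the feedback program that, on input $\vec m'$, searches $k=0,1,2,\dots$. For each $k$ it computes $y(k)$ by calling the simulator recursively, $\pa{c}^X(e',\vec m',k)$, and it halts as soon as it finds a $k$ with $y(k)=0$. To get the value of $y(k)$, as opposed to mere convergence, the search first asks the feedback oracle whether the simulation converges. It then asks queries of the form ``does $\pa{c'}^X(e',\vec m',k,j)$ converge'', where $c'$ converges only if the output equals $j$. It runs $j$ through $0,1$ (values of $y$ may be coded into $\{0,1\}$) or through all of $\omega$. If the first answer is $\downarrow$, exactly one such query receives $\downarrow$, so the value is found after finitely many queries.

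The master simulation replaces $?Q?$ by the feedback query ``does $\pa{d}^X(\vec m')$ converge?''. It returns $0$ on $\downarrow$ and $1$ on $\uparrow$, which is exactly $\mathsf{E}(y)$ provided every $y(k)$ is defined. This is a single query step, matching the convention that a Kleene query counts as one stage.

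The correctness claim $\{e\}^{\mathsf E}(m,X)\simeq\pa{k_0(e)}^X(m)$ is then proved in two directions.

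\textbf{Kleene to feedback.} We argue by induction on the rank of the computation tree $\mathfrak T^{\mathsf E}(e,m,X)$, or equivalently along the monotone inductive operator building convergent computations. If $\{e\}^{\mathsf E}(m,X)\downarrow$, then every query in it has a total $y$, and each $y(k)$ is a convergent computation of lower rank. By the inductive hypothesis, each simulated subcomputation converges with the right value, so the search in $\pa{d}^X$ is a convergent or divergent computation with a wellfounded tree, and the oracle answer is correct.

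\textbf{Feedback to Kleene.} We argue by induction on the rank of the feedback computation tree: if $\pa{k_0(e)}^X(m)$ converges, then each simulated query had a wellfounded tree below it. A subcomputation that is simulated and converges corresponds to a convergent Kleene subcomputation with the same value. The delicate point is a simulated $y(k)$ that diverges or freezes. In the Kleene setting that query has no value, so $\{e\}^{\mathsf E}$ is undefined. In the feedback setting, a divergent $\pa{c}^X(e',\vec m',k)$ means the search in $d$ runs forever without freezing, so $d$ could be reported $\uparrow$, wrongly yielding $\mathsf E(y)=1$.

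This asymmetry between divergence and freezing is the main obstacle. To repair it, we make $d$ ask the oracle, before computing $y(k)$, whether $\pa{c}^X(e',\vec m',k)$ converges. If the answer is $\uparrow$, $d$ deliberately freezes by calling a fixed self-querying program $\pa{z}$ with $\pa{z}\Uparrow$, for instance $z$ asking about its own convergence. Then a divergent $y(k)$ freezes the whole simulation, so it is undefined exactly when the Kleene computation is undefined, which gives the $\simeq$ in the statement. The same freezing device is used when some $y(k)$ freezes, since freezing propagates upward automatically. With this, both inductions go through.
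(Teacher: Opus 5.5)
There is a genuine gap. Your search program $d$ halts at the first $k$ with $y(k)=0$. In Kleene's semantics (see the definition of $\Gamma^{\mathsf I}$), however, the query $?Q(e',\vec m',X)?$ returns a value only if \emph{every} $y(k)$ converges, since $\mathsf{E}$ is only ever applied to a total $y$. You note yourself that the answer equals $\mathsf E(y)$ ``provided every $y(k)$ is defined'', but nothing in your construction ensures this. Your repair (ask whether $y(k)$ converges, and freeze on $\uparrow$) only reaches the values the search actually visits, never those after the first zero.

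Here is a concrete counterexample. Let $y(k)=\{e'\}^{\mathsf E}(\vec m',X,k)$ be $0$ for $k=0$ and a non-halting loop for $k\geq 1$. Let $e$ make the single query $?Q(e',\vec m',X)?$ and output the answer. Then $\{e\}^{\mathsf E}(m,X)\uparrow$. Your $d$, by contrast, confirms that $y(0)$ converges, finds $y(0)=0$ and halts. The master therefore reads $\downarrow$ as $\mathsf E(y)=0$, and $\langle k_0(e)\rangle^X(m)\downarrow 0$. So your ``Feedback to Kleene'' direction fails: a convergent simulation need not reflect a convergent Kleene computation, because the tail of $y$ is never simulated at all. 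The divergence-versus-freezing issue you identified is real, but it is not the whole obstacle. A single zero-search query simply cannot certify that $y$ is total.

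The missing idea is a separate totality check before the zero-search, and this is exactly how the paper proceeds.
\begin{itemize}
\item First, query a procedure $P$ that runs through $k=0,1,2,\dots$, asking whether the simulation of $y(k)$ converges. It moves on to $k+1$ on $\downarrow$ and deliberately freezes on $\uparrow$.
\item If $y$ is total, $P$ runs forever without freezing. The query ``does $P$ converge?'' is then answered $\uparrow$, which certifies totality.
\item If some $y(k)$ diverges or freezes, $P$ freezes, and so does the whole simulation, as required.
\item Only after receiving $\uparrow$ does one query your zero-search (the paper's $P_1$). Its $\downarrow/\uparrow$ answer is then exactly $\mathsf E(y)=0/1$.
\end{itemize}
The paper's parenthetical remark after $Q_2$ addresses precisely the failure above.

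Separately, fix your fixed-point setup. The recursive call $\langle c\rangle^X(e',\vec m',k)$ treats $c$ as a universal simulator that takes the Kleene index as an argument. So you want a single $\tilde e$ with $\langle\tilde e\rangle^X(e,m)\simeq\langle h(e,\tilde e)\rangle^X(m)$ for all $e$, and then $k_0(e)$ obtained from $\tilde e$ by s-m-n. A fixed point of $c\mapsto h(e,c)$ for each fixed $e$ only simulates that one $e$, so it cannot serve the recursive call.
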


Consequently results about Kleene recursion in $\mathsf{E}$ can be translated
across to feedback recursion. In particular the feedback semi-computable sets
of integers, will be precisely the Kleene semi-recursive in $\mathsf{E}$ sets,
in short they are the $\Pi^1_1$ sets, as was already shown in {\cite{AFL2020}}.

\

In Section \ref{GSFC} we address one such question that arose in {\cite{AFL2020}} at their Cor. 3.6.

\begin{definition}
  {\em{\cite{AFL2020}}} (Def 2.8) Let $X, Y \in \, 2^{\omega}$. Then $X$ is
  {\tmem{feedback reducible to $Y$}}, or \tmem{feedback} \tmem{computable}
  from $Y, X \leq_F Y$, if there is $e \in \omega$ such that:
  
  (i) $\all n \in \omega${\pa{e}}$^Y (n) \Downarrow$
  
  (ii) $\all n \in \omega${\pa{e}}$^Y (n) \downarrow$
  
  (iii) $\all n \in \omega${\pa{e}}$^Y (n) = X (n)$.
\end{definition}

In particular then, a set $B \sset \omega$ is then {\tmem{feedback
computable}} if the characteristic function $K_B \in 2^{\omega}$ is feedback
computable with an empty oracle $Y$: \ $\ex e \in \omega ( \all n
K_B (n) = \la e \ra (n)$). We call such an $e$ an {\tmem{index}} of such a
feedback computable set.

\begin{definition}
  {\em\cite{AFL2020}} (Def 3.5).\quad A set $B \sset \omega$ is {\tmem{feedback
  semi-computable (in $X$)}} when there is an $e \in \omega$ such that $n \in
  B \Equi \pa{e}^X (n) \Downarrow$.
\end{definition}

Similarly we call $e$ a (semi-computable) index for $B$. Note that the
requirement is simply that the computation $\pa{e}^X (n)$ is defined, that is
has a wellfounded tree of subcomputations. We can thus think of the
semi-computable in $X$ sets as the domain of the partial feedback computable
function $\pa{e}^X (n)$.
They then show that a $B \sset \omega$ is feedback semi-computable in $X$ if
and only if $B$ is $\Sigma_1 (L_{\omega^X_{1 \tmop{ck}}} [X])$ where as usual
$L_{\omega^X_{1 \tmop{ck}}} [X]$ is the least transitive admissible set
containing $X$, where in turn $\omega^X_{1 \tmop{ck}}$ is the least ordinal
not (Turing) recursive in $X$.\quad Hence, as is well known, this means $B$ is
feedback semi-computable in $X$ iff it is $\Pi^1_1 (X)$.

From that characterisation much can follow.

\begin{corollary}
  {\em\cite{AFL2020}}(Cor 3.6) A set $B \sset \omega$ is feedback computable if
  and only if both it and its complement $\neg B$ are feedback
  semi-computablee.
\end{corollary}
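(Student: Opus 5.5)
The plan is to prove the two directions separately. The forward direction is immediate and uses only the definitions. The converse can be had cheaply through the $\Pi^1_1$ characterisation, but I would also give a direct machine construction, since that is what later yields uniformity in the indices.

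\textbf{Forward direction.} Suppose $e$ is an index for $B$, so that for all $n$ we have $\pa{e}(n)\Downarrow$, $\pa{e}(n)\downarrow$, and $\pa{e}(n)=K_B(n)$.
\begin{itemize}
\item For $B$, let $e_1$ be the feedback machine that runs $\pa{e}(n)$ (as a subroutine, or equivalently by a single query followed by simulation) and then halts if the output is $1$, and freezes otherwise. Freezing can be forced by a query to an index whose computation calls itself, which yields an illfounded tree.
\item For $\neg B$, let $e_0$ do the same with the roles of $0$ and $1$ exchanged.
\end{itemize}
Then $n\in B \Equi \pa{e_1}(n)\Downarrow$ and $n\notin B\Equi \pa{e_0}(n)\Downarrow$. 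The point to check is that in the unwanted case the computation really becomes undefined ($\Uparrow$) and does not merely diverge. This is why we deliberately freeze rather than loop: semi-computability is defined in terms of $\Downarrow$.

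\textbf{Converse, abstract route.} Suppose $B$ and $\neg B$ are both feedback semi-computable. By the characterisation quoted from \cite{AFL2020}, both are $\Sigma_1(L_{\omega_1^{ck}})$, i.e.\ $\Pi^1_1$. Hence $B$ is $\Delta^1_1$, i.e.\ hyperarithmetic. By \cite{AFL2020} the hyperarithmetic sets are exactly the feedback computable ones, so $B$ is feedback computable.

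\textbf{Converse, direct construction.} Given semi-computable indices $a$ for $B$ and $b$ for $\neg B$, we build a total feedback machine. The difficulty is that we cannot simply query ``does $\pa{a}(n)$ halt?'', because if $\pa{a}(n)\Uparrow$ the querying computation itself freezes. So we cannot wait on one index alone; we must dovetail the two subcomputation trees. This is naturally a \emph{stage comparison} (Gandy selection style) argument. Using Theorem~\ref{ktof} together with Theorem~\ref{ftok}, transfer to Kleene recursion in $\me$. There, apply the Stage Comparison Theorem to obtain a machine which, on input $n$, decides whether $\pa{a}(n)$ is defined at an ordinal stage no later than $\pa{b}(n)$. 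Exactly one of the two is defined, so this comparison computation is always defined, and it outputs $K_B(n)$. Pulling back via $k_0$ gives a feedback index. I expect this to be the main obstacle: establishing stage comparison in the feedback/Kleene setting, which the paper promises to do later. For the corollary as stated, the abstract route via $\Delta^1_1=$ hyperarithmetic suffices, and I would rely on that.
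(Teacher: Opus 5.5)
Your proposal is correct and follows the paper. The forward direction forces freezing, which is the right choice since semi-computability is defined via $\Downarrow$. The converse uses the quoted characterisation of feedback semi-computable sets as $\Pi^1_1$ together with $\Delta^1_1=$ hyperarithmetic $=$ feedback computable, which is exactly how the paper derives the corollary; its uniform version is the later Gandy Selection lemma.

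One caveat concerns your optional direct route. The translation $\pa{e}^X(m)\simeq\{k_1(e)\}^{\me}(m,X)$ only preserves convergence $\downarrow$, so a divergent but non-freezing $\pa{a}(n)$ becomes Kleene-undefined and gets no rank. You must therefore first replace $a,b$ by indices that converge exactly on their non-freezing inputs, as the paper does before applying Selection. Alternatively, apply the feedback Stage Comparison Theorem directly, since its rank also counts divergent computations.
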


Further:
\begin{corollary}
  The feedback semi-computable sets are closed under finite unions and
  $\ex^{\leq \omega}$ existential quantification.
\end{corollary}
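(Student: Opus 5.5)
The plan is to argue through the characterisation quoted above from \cite{AFL2020}. There, a set $B \sset \omega$ is feedback semi-computable (in $X$) iff it is $\Sigma_1 (L_{\omega^X_{1 \tmop{ck}}} [X])$, equivalently iff it is $\Pi^1_1 (X)$. So it is enough to check that the $\Sigma_1$ sets over the admissible set $A = L_{\omega^X_{1 \tmop{ck}}} [X]$ are closed under the two operations, and then to translate back.

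\textbf{Finite unions.} Let $B_0, B_1$ be feedback semi-computable with indices $e_0, e_1$. By the characterisation each is $\Sigma_1$ over $A$, say defined by $\ex z\, \vp_0 (z,n)$ and $\ex z\, \vp_1 (z,n)$ with $\vp_i$ being $\Delta_0$ (parameters from $A$ allowed, or $X$ itself). Then $B_0 \cup B_1$ is defined by $\ex z\, (\vp_0 (z,n) \vee \vp_1 (z,n))$, which is again $\Sigma_1$. The converse direction of the characterisation then makes the union feedback semi-computable. Finite unions follow by induction.

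\textbf{Number quantification.} Suppose $C \sset \omega \times \omega$ is feedback semi-computable, using a coding of pairs, and let $B = \{ n : \ex k\, (n,k) \in C \}$. If $C$ is defined by $\ex z\, \vp (z,n,k)$, then $B$ is defined by $\ex k \in \omega\, \ex z\, \vp (z,n,k)$. Contracting the two existentials into the single quantifier $\ex w\, (w = (k,z) \wedge k \in \omega \wedge \vp)$ gives a $\Sigma_1$ definition over $A$, since $\omega \in A$ and $A$ is closed under pairing. So $B$ is feedback semi-computable. The same step could be done on the $\Pi^1_1 (X)$ side, where closure under number quantifiers is classical.

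\textbf{The main point of care.} The only step needing attention is that the characterisation of \cite{AFL2020} is genuinely an equivalence relativised to $X$, and that it is applied with the oracle $X$ held fixed; beyond that nothing is hard. This argument gives no uniformity, so it does not produce an index for the union from $e_0, e_1$ in a recursive way. That uniform version is what the paper promises later via Gandy Selection and Theorems \ref{ftok} and \ref{ktof}, so I would present this corollary only in its non-uniform form here.
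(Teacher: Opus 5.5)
Your argument is correct and is essentially the paper's own justification: the corollary is stated there as an immediate consequence of the characterisation from \cite{AFL2020} of the feedback semi-computable in $X$ sets as exactly the $\Sigma_1 (L_{\omega^X_{1 \tmop{ck}}} [X])$ (equivalently $\Pi^1_1(X)$) sets, and your checks of closure under $\vee$ and under $\ex k \in \omega$ are the routine verifications the paper leaves implicit. As you anticipate, the paper later reproves this closure uniformly in the indices via Gandy Selection for feedback computation (Lemma~\ref{closure}).
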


They then ask: \\

\nod
{\em Question: Given indices $e_0, e_1$ for sets $B,
\neg B$ which are both assumed semi-computable (that is $B = \left\{ n \mid
\pa{e_0} (n) \Downarrow \right\}$ and $\neg B = \left\{ n \mid \pa{e_1} (n)
\Downarrow \right\}$, is there a uniform method for finding an index for $e =
e (e_0, e_1)$ witnessing that $B$ is feedback computable?}\\

Note that the naive approach does not work for this question: one could devise
a putative procedure $\pa{e}$ that simulates simultaneously \ $\pa{e_0} (n)$
and $\pa{e_1} (n)$ until one or other converges. But it may be that $\pa{e_0}
(n) \downarrow$ whilst $\pa{e_1} (n) \Uparrow$ and the latter may occur before
the former convergence. Then the whole procedure $\pa{e}$ becomes undefined.

\

However, now with Theorem \ref{ftok} we can drag the question over to one about $k_0
(e_0)$ and $k_0 (e_1)$ and Kleene recursive sets. In this arena we do have
that there is a p.r. function $g$ so that for any pair of indices $(e_0',
e'_1)$ for complementing feedback semi-computable sets $B', \neg B',$ $g
(e'_0, e'_1)$ returns an index for what is now the Kleene recursive set $B'$.
By Theorem \ref{ktof} $k_1^{- 1} (g (k_0 (e_0), k_0 (e_1)))$ is then an index for $B$
as a feedback computable set.

\

A similar question (and answer) would then be, given indices $e_0, e_1$ as
above for semi-computable sets $B, C$, is there a uniform method for finding
an index $e$ from $e_0$ and $e_1$ for $B \cup C$?

\

This is perhaps a little unsatisfying, and in Section \ref{GSFC} we prove
 there are uniform methods for directly answering these questions, by proving a
version of the Gandy Selection Theorem {\cite{Ga67a}} tailored for feedback
computation. To do this we prove a Stage Comparison Theorem for feedback
computation (Section \ref{GSCT} Theorem \ref{OrdComp}) which is of independent (and fruitful) interest in its own right.
As the authors of {\cite{MY1996}} write, this approach of Gandy to first
prove a Stage Comparison Lemma, and from that a Selection Theorem and then
other consequent results, became the standard methodology in higher type and
generalised recursion theory, ``and of its offshoots in descriptive set theory". This approach first appeared (without proofs) in Gandy's \cite{Ga67a} paper.

\

Lastly we observe that the definition of feedback semi-computable sets, as
domains of partial feedback computable sets which have wellfounded computation
trees, {\ie} using the relation $\Downarrow$, coincides with the definition
made using ${\downarrow}$.

\begin{lemma}
  There is a primitive recursive function $g$ so that $$\all e, m \all X \pa{g (e)}^X (m)
  \downarrow \Equi \pa{e}^X (m) \Downarrow.$$
\end{lemma}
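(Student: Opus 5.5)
The natural approach is to let $\pa{g(e)}^X(m)$ make a single feedback query about $\pa{e}^X(m)$ and then halt at once, whatever the answer. Concretely, by the s-m-n theorem for feedback machines (which is primitive recursive, exactly as in ordinary recursion theory), let $g(e)$ be an index for the following program on input $m$ with oracle $X$. Issue the query $?Q$: \emph{does $\pa{e}^X(m)$ converge or diverge?} On receiving either answer $\downarrow$ or $\uparrow$, halt with output $0$. The index $e$ is hard-coded into the program, and the passage $e \mapsto g(e)$ only inserts this constant. Hence $g$ is primitive recursive.

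Next we verify the equivalence using the semantics of feedback queries from \cite{AFL2020}. A query about $\pa{e}^X(m)$ returns an answer exactly when that subcomputation is not frozen, that is, when $\pa{e}^X(m)\Downarrow$. Its tree of subcomputations is then wellfounded, and the oracle correctly reports $\downarrow$ or $\uparrow$ according to whether the top-level run halts. The query is a single step of the calling machine.

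\emph{Right-to-left.} Suppose $\pa{e}^X(m)\Downarrow$. Then the query is answered, the master computation proceeds one further step and halts. Its computation tree consists of a root with one child, and that child is the wellfounded tree of $\pa{e}^X(m)$. So the whole tree is wellfounded and $\pa{g(e)}^X(m)\downarrow$ (indeed $\Downarrow$ as well).

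\emph{Left-to-right.} Suppose $\pa{g(e)}^X(m)\downarrow$. Convergence presupposes that the computation is defined, so every query made along the way was answered. In particular the unique query about $\pa{e}^X(m)$ was answered, which is only possible when $\pa{e}^X(m)\Downarrow$. If instead $\pa{e}^X(m)\Uparrow$, the master computation freezes at its first step and is itself undefined, hence does not converge.

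The one point needing care is that last step: we must check against the precise definition in \cite{AFL2020} that ``$\downarrow$'' for a feedback computation entails ``$\Downarrow$''. Equivalently, a computation whose query is about a frozen subcomputation is itself frozen rather than merely divergent. This is how freezing is propagated up the tree in their definition: a node waiting on an illfounded subtree lies on an infinite descending path. So the argument goes through. Everything else is routine bookkeeping with the s-m-n theorem.
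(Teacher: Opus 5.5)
Your proposal is correct and is essentially the paper's own proof: $g(e)$ issues the single query about $\pa{e}^X(m)$ and halts whatever the answer, so it converges exactly when that query is answered, that is, when $\pa{e}^X(m)\Downarrow$. The point you flag, that $\downarrow$ entails $\Downarrow$, follows directly from the paper's inductive definition via the operator $I$. There a computation enters $I(C)$ only if every one of its queries is answered from $C$, so a query about a frozen subcomputation leaves the caller frozen as well.
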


{\pf}Given $e$ the feedback computation $g (e)$ does the following: on input
$m$ it asks for the subcomputation that answers the query ``?{\tmem{Does the
computation}} $\pa{e}^X (m)$ $\downarrow$ or $\uparrow ?$''. Then $\la g (e)
\ra^X (m)$ converges with an answer, whatever it is, precisely when $\pa{e}^X
(m) \Downarrow$. {\qed}

\

\

Consequently the feedback semi-computable (in $X$) sets (using $\Downarrow$)
can be defined as the class of sets which are domains of partial
semi-computable (in $X$) functions with convergence $\downarrow$. And these
are precisely the Kleene semi-recursive (in X) and $\mathsf{E}$ sets.

\section{\normalsize Definitions and Stage Comparison for Kleene Recursion}

We give here the definition of Kleene recursion in an arbitrary type-2 functional $\mi$, and prove Gandy's Stage Comparison Theorem for it, in Section 2.1. In 2.2 we prove Gandy's Selection Theorem.

\subsection{\normalsize Kleene recursive functions}
\begin{definition}
  \label{3.8} For $\mi$ any type-2 functional, we set $\Gamma^{\mathsf{I}} (C) =$:
  \[ \begin{array}{l}
     \hspace{-1em}  \{ \langle \pa{e, \vec m, \vec X}, n \rangle |
       P_e^{\mathsf{I}} (\vec m, \vec X) \text{ is a Kleene
       computation in $\mathsf{I}$ making only oracle calls }\\
       \quad \quad \quad\hspace{5.17em} Q^{\mathsf{I}} (e', \vec m', \vec X)
       \hspace{0.17em} \hspace{0.17em} \text{and receiving back }  n'
       = \mathsf{I} (y)   \text{ where }  \\  \hspace{9em}  y = \lambda k.y(k) \mbox{ and } 
       \all k \in \omega   \la \pa{e',
       (\vec m', k), \vec X}, y (k) \ra \in C \left. \, \right\}.
     \end{array} \]
  {\nod} As $\Gamma^{\mathsf{I}}$ is monotone, we may let
  
  $$\Gamma^{\mathsf{I}}_0 = \emp; \Gamma_{< \alpha}^{\mathsf{I}} =
  \bigcup_{\beta < \alpha} \Gamma_{\beta}^{\mathsf{I}} \,\wedge \,
  \Gamma_{\alpha}^{\mathsf{I}} = \Gamma (\Gamma^{\mathsf{I}}_{< \alpha})$$ in
  the usual way, and reach a least fixed point $\Gamma^{\mathsf{I}}_{\infty}.$
\end{definition}

\begin{definition}
  The {\tmem{rank}} of a defined computation, $\rho^{\mathsf{I}} (
  \langle \pa{e, \vec m, \vec X}, n \rangle )$ is the least
  $\beta$, if it exists, such that $\langle \pa{e, \vec m, \vec X},
  n \rangle \in \Gamma_{\beta}^{\mathsf{I}}$ for some (unique) $n$. We often
  abbreviate this as $\rho^{\mathsf{I}} (e, \vec m, \vec X)$ with
  $n$ understood but unspecified.
\end{definition}

Thus we rank convergent computations by that $\alpha$ where they appear in the
inductive definition of convergent computations.
If $\rho^{\mi}(e, \vec m, \vec X)$ is undefined (because $(e, \vec m, \vec X)\notin dom(\Gamma^\mi_\infty)$) then we instead set it to be $\omega_1$. 
Then:

\begin{definition}[\bf The \boldmath{$\{ e \}^{\mathsf{I}}$}'th function partial
Kleene recursive in {\boldmath{$\mathsf{I}$}}]\mbox{}\\
  Using $\Gamma_{\infty}^{\mathsf{I}}$:
  
   $\{ e \}^{\mathsf{I}} (\vec m, \vec X)$\\
  (i) is {\tmem{defined}},
  or {\tmem{convergent, with output}} $n$ iff $\Gamma_{\infty}^{\mathsf{I}}
  (\pa{e, \vec m, \vec X}) = n$ with $n \in \omega$.
  In which case we set $ {\{ e \}^{\mathsf{I}}}  (\vec m, \vec X) =
  n$ or write $\{ e \}^{\mathsf{I}} (\vec m, \vec X) \downarrow n$, or simply 
  $\{ e \}^{\mathsf{I}} (\vec m, \vec X)\da$;\\
  \nod (ii) is
  {\tmem{undefined}}, or {\tmem{divergent}} written $\{ e \}^{\mathsf{I}}
  (\vec m, \vec X) \uparrow$, when $\pa{e, \vec m, \vec X}
  \notin \dom (\Gamma^{\mathsf{I}}_{\infty})$.\\
  (iii) $\{ e \}^{\mathsf{I}} $ is {\tmem{Kleene recursive in $\mathsf{I}$}} \
  if it is partial Kleene recursive in $\mathsf{I}$ and total.
\end{definition}

Having given the formal definitions, we are going to relax our notation, and
in the sequel blithely write ``$m, X$'' for ``$\vec m, \vec X$''
pretending we have single integer and real inputs, trusting that the reader
can make the appropriate amendments if they feel it needed, as in the next definition.

\begin{definition}
 (i) a) The function $f \sset \omega \times \omega$,  or the functional $F:  (\omega \times \bai) \imp \omega$, is {\tmem{ Kleene partial
  recursive in}} $\mathsf{I}$  if it is $\{ e \}^{\mathsf{I}}$ for some
  $e \in \omega$.\\
  b) A function $f \in\,^{\omega}\! \omega$ or functional $F:  (\omega \times \bai) \imp \omega$ is {\tmem{Kleene
  recursive in $\mathsf{I}$}} \ if it is Kleene partial recursive in
  $\mathsf{I}$ and total, and so with domain $\omega$ ($\omega \times \bai$ resp.) \\ c) A relation $R$ is
  {\tmem{Kleene recursive in}} $\mathsf{I}$ if its characteristic function
  $K_R$ is Kleene recursive in $\mathsf{I}$.
  
  (ii) a)  A relation $R$ is {\tmem{Kleene semi-recursive in}} $\mathsf{I}$ if it
  is the domain of a function Kleene partial recursive in $\mathsf{I}$. \\ b) It is
  {\tmem{Kleene co-semi-recursive in $\mathsf{I}$}}, if its complement is
  {\tmem{{\tmem{Kleene semi-recursive in $\mathsf{I}$}}.}}
\end{definition}

There are many basic lemmata to be proven about this definition. We list a few.

\begin{lemma} For any $\mj:$ the class of relations semi-recursive in $\mj$ is closed under universal number quantification, $\all^0$.
\end{lemma}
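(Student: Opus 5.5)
Suppose $R(m,k,X)$ is Kleene semi-recursive in $\mj$, say $R = \dom(\{e\}^{\mj})$, and let $S(m,X) \Equi \all k\, R(m,k,X)$. The plan is to use a single query to make the machine run all instances $\{e\}^{\mj}(m,k,X)$, $k\in\omega$, in parallel. The point is that a query $?Q^{\mj}(e',m,X)?$ returns a value to the master computation exactly when every subcomputation $P^{\mj}_{e'}(m,k,X)$ converges. So the query mechanism itself implements universal number quantification over convergence.

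Concretely, we first fix, by the usual $s$-$m$-$n$ and padding manipulations on Turing machine indices, an index $e'$ (primitive recursive in $e$) such that $P^{\mj}_{e'}(m,k,X)$ simulates $P^{\mj}_e(m,k,X)$. Whenever the latter halts, $P^{\mj}_{e'}$ discards the output and halts with output $0$. Any query made by $P_e$ is passed through verbatim, so the subcomputation trees are literally the same. Hence $\{e'\}^{\mj}(m,k,X)\da$ iff $\{e\}^{\mj}(m,k,X)\da$, and in that case the value is $0$. Next let $e^*$ be the index of the machine which, on input $(m,X)$, immediately makes the query $?Q^{\mj}(e',m,X)?$. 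On receiving any answer $n'$ it halts with output $0$.

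We then verify, using the inductive definition of $\Gamma^{\mj}$, that $\{e^*\}^{\mj}(m,X)\da$ iff $\all k\,\{e'\}^{\mj}(m,k,X)\da$. For the forward direction, if $\pa{\pa{e^*,m,X},0}\in\Gamma^{\mj}_\beta$, then by Definition \ref{3.8} the single oracle call received a value $\mj(y)$ with $\pa{\pa{e',(m,k),X},y(k)}\in\Gamma^{\mj}_{<\beta}$ for all $k$. So each instance converges, and hence by the first step $R(m,k,X)$ holds for all $k$. Conversely, suppose every instance converges. Let $y(k)$ be its (unique) output and $\beta_k=\rho^{\mj}(e',(m,k),X)$. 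Put $\beta=\sup_k(\beta_k+1)$, a countable ordinal, so the whole of $y$ lies in $\Gamma^{\mj}_{<\beta}$. Because $\mj$ is total, $\mj(y)$ is defined. Thus at stage $\beta$ the computation $P^{\mj}_{e^*}(m,X)$ makes its one query, receives $\mj(y)$ and halts, so it lies in $\Gamma^{\mj}_\beta$. Therefore $S=\dom(\{e^*\}^{\mj})$ is semi-recursive in $\mj$, uniformly in $e$.

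The main obstacle is the first step: making sure that the normalising index $e'$ has exactly the same domain as $e$, so that convergence is neither gained nor lost. This must be argued at the level of the machine model. The simulation must treat each query of $P_e$ as a single step exactly as $P_e$ does, which follows by building $e'$ as the machine for $e$ with its halting states redirected to a routine writing $0$. The rest is routine bookkeeping with the monotone operator. It relies only on the fact that the ranks of the $\omega$ many subcomputations have a supremum below $\omega_1$, and on the totality of $\mj$.
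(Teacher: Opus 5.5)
Your argument is correct. The paper lists this lemma among the basic facts and gives no proof of it. Your single-query construction is exactly the mechanism the paper intends: $?Q^{\mj}(e',m,X)?$ returns a value precisely when all $\omega$ subcomputations converge, because $\mj$ is total. Your bookkeeping with $\Gamma^{\mj}_{<\beta}$ is also sound.

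The normalisation to output $0$ is harmless but unnecessary. Since $\mj$ is total on ${}^{\omega}\omega$, you could query $e$ itself, so the step you call the main obstacle is not really one.
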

\begin{lemma}\label{Karithmetical}
 Any arithmetical relation $R$ 
 is recursive in $\me$.
 \end{lemma}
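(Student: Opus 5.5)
The proof goes by induction on the arithmetical complexity of $R$, i.e.\ on the quantifier prefix of a prenex definition. I will show that for every arithmetical $R(m,X)$ there is an index $e_R$ with $\{e_R\}^{\me}(m,X)\downarrow K_R(m,X)$ for all $m,X$, and I will keep the construction uniform in the formula, so that $e_R$ is obtained effectively from the syntax.

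\textbf{Base case.} Write $R$ in prenex form $Q_1 k_1 \cdots Q_j k_j\, S(m,\vec k,X)$ with $S$ recursive in $X$. The characteristic function of $S$ is ordinary Turing computable from the oracle $X$. Such a machine never issues a query $Q$, so its tree of computations is just the master node. Its computation therefore lies in $\Gamma^{\me}_1$ and halts with the correct $0/1$ output, and $K_S$ is Kleene recursive in $\me$.

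\textbf{Inductive step.} Suppose $R(m,X)\Equi \ex k\, S(m,k,X)$, where $K_S=\{e_S\}^{\me}$ is total. The new machine on input $(m,X)$ makes one query $?Q(e',m,X)?$, where $e'$ is an index with $\{e'\}^{\me}(m,k,X)\simeq 1-\{e_S\}^{\me}(m,k,X)$. Such an $e'$ is found by composing with an ordinary Turing machine that post-processes the output, and this composition is allowed because the subcomputation is itself a Kleene machine. Since $\{e_S\}^{\me}$ is total, every $y(k)$ converges, so $y=\lambda k.\,y(k)$ is a total function. Then $\me(y)=0$ iff $\ex k\, y(k)=0$ iff $\ex k\, S(m,k,X)$, so the master machine outputs $1-\me(y)$ and halts. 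If $R$ begins with a universal quantifier, I first negate, apply the existential case, and then negate again using the ordinary post-processing step; note that recursive relations are trivially closed under complement.

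\textbf{Rank bookkeeping.} To be formal, I must check that each $\langle\pa{e,m,X},n\rangle$ enters $\Gamma^{\me}_\infty$. If all the subcomputations $\pa{e',(m,k),X}$ appear in $\Gamma^{\me}_{<\beta}$, then the master computation appears in $\Gamma^{\me}_\beta$. By induction on the number of quantifiers, the ranks are therefore finite, bounded by (number of quantifiers)$+1$.

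The only point needing care, and the main obstacle, is making precise that from an index $e_S$ one can produce the index $e'$ for the negated subcomputation, and also the index of the new master machine. This is an $s$-$m$-$n$ / composition fact for the Turing machine model, and I would argue it directly from the machine description rather than appeal to a recursion theorem. Totality of the inner functions is exactly what guarantees that the query returns, which is why the induction hypothesis must assert \emph{recursive} and not merely semi-recursive.
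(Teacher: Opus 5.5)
Your proof is correct and is essentially the argument the paper intends: the paper lists this lemma without proof, but it proves the feedback analogue (Lemma \ref{arithmetic}) the same way, with relations Turing recursive in $X$ as the base case, closure under $\exists^\omega$ via a single query (here $\me$ applied to the total sequence $y$), and complementation for free. You also correctly identify the one point of substance: the induction must carry \emph{recursiveness} (totality), so that every $y(k)$ converges and the query actually returns a value.
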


\begin{lemma} 
For any $\mj$, the class of functionals partial recursive in $\mj$ is closed under functional substitution, \ie, if $G$ and $H$ are partial recursive in $\mj$, and
$$ F(\vec m,\vec X)\simeq G(\vec m,\vec X, \lambda p.H(p,\vec m,\vec X)),$$
then $F$ is p.r. in $\mj$.
\end{lemma}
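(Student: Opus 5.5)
The plan is to build, from indices $g$ for $G$ and $h$ for $H$, an index $f$ for a Kleene machine that uses the query mechanism directly. The real argument $\lambda p.H(p,\vec m,\vec X)$ will never be held in the machine. Instead the machine runs $P_g$ and hands its oracle questions down as subcomputations. By the s-m-n theorem for ordinary Turing machines, which carries over verbatim because the query mechanism is part of the machine's instruction set, we may assume $f$ is obtained primitive recursively from $g,h$.

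\textbf{Step 1 (the simulation).} On input $(\vec m,\vec X)$, the machine $P_f$ simulates $P_g$ on input $(\vec m,\vec X,Z)$ with $Z=\lambda p.H(p,\vec m,\vec X)$. Each step of $P_g$ falls into one of three kinds, handled as follows.
\begin{itemize}
\item An ordinary step that does not consult $Z$ is copied as is.
\item When $P_g$ reads $Z(p)$, the machine $P_f$ must obtain $H(p,\vec m,\vec X)$ as a single step. A query $Q(e',\vec m',\vec X)$ returns only $\mj(y)$, so the value cannot be read off directly. Since $\mj$ is normal, we have $\me\leq\mj$, and hence by Lemma \ref{Karithmetical} all arithmetical operations are available. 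The value is then recovered by querying $\me$ (simulated in $\mj$) on the sequences $y_j(k)=$ ``$0$ if $H(p,\vec m,\vec X)=j$, else $1$'' for $j=0,1,2,\ldots$ in turn. The sequence $y_j$ is constant in $k$ and is computed by a subcomputation that runs $P_h(p,\vec m,\vec X)$ and compares the result with $j$. The first $j$ with answer $0$ is the value.

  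A simpler route avoids this search: let $P_f$ call $P_h(p,\vec m,\vec X)$ as an inline sub-routine, since partial recursive procedures compose. A convergent $P_h$ computation is a finite sequence of steps, each query of which is itself a single step. So the machine can simply run $h$'s program on its own tape at the point where it is needed. This is the approach I intend to use.
\item When $P_g$ makes a query $Q(e',\vec m',\vec X,Z)$ in which $Z$ is passed down, $P_f$ instead issues the query $Q(e'',\vec m',\vec X)$. Here $e''=f'(e')$ applies the same transformation recursively, so the subcomputations again simulate with $Z$ replaced by calls to $h$. By the recursion theorem we choose, uniformly in $g$ and $h$, an index $f^*$ and a p.r.\ map $e'\mapsto e''$ so that this recursion is self-referential.
\end{itemize}

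\textbf{Step 2 (correctness).} We prove, by induction on the rank $\rho^{\mj}$, two implications in parallel. First, if $\langle\pa{e,\vec m',\vec X,Z},n\rangle\in\Gamma^{\mj}_\beta$ and all the needed $H$-values converge, then the transformed computation converges to $n$. Second, conversely, if the transformed computation lies in $\Gamma^{\mj}_\beta$, then so does the original computation. At successor stages this is exactly the monotone clause of Definition \ref{3.8}. The sequences $y$ presented to $\mj$ are identical in both computations, since the subcomputations produce the same values by the induction hypothesis.

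\textbf{Main obstacle.} The hard part is the second, divergence direction, which the statement $\simeq$ requires. We must show that if $F$ diverges then $G$ diverges or some needed value $H(p,\ldots)$ diverges, and that an ill-founded simulated tree arises only from an ill-founded original tree or from a divergent $H$-call. I would handle this by inducting on the rank of the transformed computation rather than the original one. This gives the converse inclusion, and together with the forward induction it yields $\simeq$.
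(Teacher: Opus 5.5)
Your construction is the paper's: $P_f$ runs $P_g$ and replaces each read of $Y(p)$ by an inline run of $P_h(p,\vec m,\vec X)$. Your third bullet is a correct refinement that the paper's sketch skips. In this model the real oracles are handed unchanged to every subcomputation, so $Y$ must be eliminated throughout the tree, and the recursion theorem does this. One detail: the translated index must also carry the original $\vec m$, because $Z=\lambda p.H(p,\vec m,\vec X)$ depends on $\vec m$ and not on the subcomputation's inputs $\vec m'$. So take $e''=f'(e',\vec m)$ via s-m-n, or pass $\vec m$ down among the integer arguments.

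There is, however, a genuine gap in the direction $F(\vec m,\vec X)\downarrow\Rightarrow G(\vec m,\vec X,\lambda p.H(p,\vec m,\vec X))\downarrow$.

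\begin{itemize}
\item The right-hand side is defined only when $\lambda p.H(p,\vec m,\vec X)$ is total; the paper says explicitly that $F$ must then be undefined otherwise. But your $P_f$ evaluates $H$ only at the arguments that $P_g$ and its subcomputations actually consult.
\item For example, suppose $G$ looks only at $Y(0)$, with $H(0,\vec m,\vec X)\downarrow$ and $H(1,\vec m,\vec X)\uparrow$. Then $P_f$ converges while the right-hand side does not.
\item Your Step 2 hides this by speaking of ``needed'' $H$-values. Your converse induction on the transformed rank silently presupposes that $Z$ is a real, so that the ``original computation'' exists at all.
\item Incidentally, the implication you single out as the main obstacle ($F\uparrow$ implies $G\uparrow$ or some needed $H$-value $\uparrow$) is just the contrapositive of your forward induction. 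The case needing care is $F\downarrow$, which your converse induction handles, and that is exactly where totality enters.
\end{itemize}

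The repair is easy and works for arbitrary $\mj$, not only normal ones. Let $P_f$ begin with a single query $?Q(h',\vec m,\vec X)?$, where $\{h'\}^{\mj}((\vec m,k),\vec X)\simeq H(k,\vec m,\vec X)$, and discard the returned value $\mj(y)$. By the definition of $\Gamma^{\mj}$, this query is answered iff $H(k,\vec m,\vec X)\downarrow$ for every $k$. From then on $Z$ is a total real, and your two inductions go through. The paper's own sketch has the same omission: its parenthetical remark asserts divergence in the non-total case, but its construction does not enforce it.
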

\pf For $Y$ a real variable, suppose and $H(p,\vec m,\vec X))= \{h\}^\me(p,\vec m,\vec X)$ and $G(\vec m,\vec X,y) = \{g\}^\me(\vec m,\vec X,Y) =P_g^\me(\vec m,\vec X,Y)$. We modify $P_g$ to obtain a suitable $P_f$. Whereas $P_g$ has inputs $\vec m$ on tape and use of oracle calls to $\vec X$ and $Y$, instead of an oracle call $?Y(p)=?$ we have in the modified program $P_f$ a subroutine that instead computes the value of $\{h\}^\me(p,\vec m,\vec X)$ and places that at the relevant point of the tape. (Note that if $\{h\}^\me(p,\vec m,\vec X)$ is not total in $p$ then $F(\vec m,\vec X)\simeq P_f(\vec m,\vec X)$ is undefined.)\qed\\

One would then proceed in a familiar manner and prove an $S^n_m$-Theorem, and thence:

\begin{theorem}[Recursion Theorem for Kleene Recursion]\label{KRT}

For any functional $F(e,\vec m, \vec X)$ Kleene partial  recursive in $\mi$, there exists $\bar{e}\in \omega$ such that $\all \vec m, \vec X\, F(\bar e,\vec m, \vec X)=\{\bar e\}^\mi(\vec m, \vec X)$. \end{theorem}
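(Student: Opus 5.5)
The proof follows Kleene's classical diagonal argument, transplanted to the Turing machine setting. The only ingredients are the $S^n_m$-Theorem just mentioned and closure under substitution, both of which are uniform in $\mi$. Note that the statement must be read as Kleene equality $\simeq$: both sides are defined and equal, or both are undefined.

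\textbf{Step 1.} First I would fix the $S^n_m$-Theorem in the form needed: there is a primitive recursive $s$ such that for all $a,b,\vec m,\vec X$,
$$\{s(a,b)\}^\mi(\vec m,\vec X)\simeq\{a\}^\mi(b,\vec m,\vec X).$$
The machine $P_{s(a,b)}$ writes $b$ onto the tape in front of $\vec m$ and then runs the instructions of $P_a$. The only point to check beyond the ordinary case concerns oracle queries $?Q(e',\vec m',\vec X)?$ made by $P_a$. These are copied verbatim, and the subcomputation trees below them are literally the same. So by induction on the stages $\Gamma^\mi_\alpha$ of Definition \ref{3.8}, $\la\pa{s(a,b),\vec m,\vec X},n\ra\in\Gamma^\mi_{\alpha+c}$ iff $\la\pa{a,(b,\vec m),\vec X},n\ra\in\Gamma^\mi_\alpha$, for a small fixed shift $c$ if any. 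Since $s$ depends only on program texts, it is independent of $\mi$.

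\textbf{Step 2.} Next I would define $G(b,\vec m,\vec X)\simeq F(s(b,b),\vec m,\vec X)$. This is Kleene partial recursive in $\mi$: a machine computes the integer $s(b,b)$ by an ordinary primitive recursive subroutine, with no oracle calls, and then runs a program for $F$ on $(s(b,b),\vec m,\vec X)$. This is the integer-argument case of the substitution lemma, and is really just composition of programs. Let $g$ be an index with $\{g\}^\mi\simeq G$.

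\textbf{Step 3.} Finally I would set $\bar e=s(g,g)$. Then
$$\{\bar e\}^\mi(\vec m,\vec X)\simeq\{g\}^\mi(g,\vec m,\vec X)\simeq F(s(g,g),\vec m,\vec X)=F(\bar e,\vec m,\vec X).$$
Moreover $\bar e$ is obtained primitive recursively from an index of $F$, so the theorem holds uniformly.

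\textbf{Main obstacle.} The only real work is Step 1. One must confirm that prefixing a program by code that writes $b$ on the tape does not disturb the inductive generation of $\Gamma^\mi_\infty$. In particular, convergence of the modified computation must be exactly convergence of the original, including the cases of ill-founded subcomputation trees and of divergent subcalls. I would handle this by a straightforward induction on $\alpha$, showing that membership at stage $\alpha$ for one computation corresponds to membership at the corresponding stage for the other. Since queries pass integers $e',\vec m'$ and the same fixed $\vec X$, the query nodes are identical in the two trees, and the induction is routine.
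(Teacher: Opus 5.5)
Your proposal is correct and follows the paper's route: the paper only says that one proves an $S^n_m$-Theorem ``in a familiar manner'' and derives the Recursion Theorem from it, which is exactly your Steps 1--3 with the classical diagonal choice $\bar e = s(g,g)$. Your check that prefixing $P_a$ with code writing $b$ leaves the query calls unchanged supplies the detail the paper leaves implicit. In fact it leaves membership in every stage $\Gamma^\mi_\alpha$ unchanged, so the shift is $c=0$.
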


\begin{lemma}\label{Kconvergence}
There is an index $g$ for a partial Kleene recursive  function $\{ g \}=\{ g \}^{\me} $ that
effects the following:\\ (1)
 $\{ g \}  ( (\bar{e}, \vec{m}), \vec X)\ua$
 iff $\frak{T}^\me(\bar e,\vec m,\vec X)$ is illfounded. Otherwise:\\
(2) $\{ g \}  ( (\bar{e}, \vec{m}), \vec X)\da
0/1$ if and only if $\{\bar e\}^\me(\vec m, \vec X)\da /\ua$.

\end{lemma}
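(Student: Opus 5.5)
We will build $\{g\}$ by the Recursion Theorem for Kleene Recursion (Theorem \ref{KRT}). The idea is that $g$ simulates $P^\me_{\bar e}(\vec m,\vec X)$ step by step, as an ordinary Turing machine with oracle $\vec X$, and it uses $\me$ in two ways. First, at each query node it replaces the query by recursive calls to itself. Second, it decides whether the top-level run halts by asking a single arithmetical question.

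\textbf{Step 1: the recursive calls.} Informally, on input $((\bar e,\vec m),\vec X)$ the program $g$ runs the master computation of $P_{\bar e}$. Whenever the simulation reaches a query $?Q(e',\vec m',\vec X)?$, $g$ does not call $\{e'\}^\me(\vec m',k,\vec X)$ directly, since such a call might diverge because the machine fails to halt, and that is not illfoundedness. Instead it makes the subcomputation call $?Q(g,(e',(\vec m',k)),\vec X)?$, asking for the sequence $k\mapsto \{g\}((e',(\vec m',k)),\vec X)$. By induction on the rank of the subtree, this sequence is total exactly when every subtree $\frak{T}^\me(e',(\vec m',k),\vec X)$ is wellfounded. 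Its values are then $0/1$ flags recording convergence of each $\{e'\}^\me(\vec m',k,\vec X)$. If every flag is $0$, that is, every subcomputation converges, then $g$ issues the genuine query $?Q(e',\vec m',\vec X)?$. Those subcomputations now converge, so $\me(y)$ returns, and $g$ passes the answer to its simulation. If some flag is $1$, the original query never returns. In that case $g$ halts with output $1$, because $P_{\bar e}$ diverges while its tree below that node is still wellfounded. The flag $\me$-test is itself a single query on the sequence of flags, which is legitimate.

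\textbf{Step 2: detecting non-halting of the master run.} Between queries the simulation is an ordinary Turing computation in $\vec X$ together with finitely many already-obtained integer answers. We cannot simply run it, because an infinite run must make $g$ output $1$, not diverge. So before each segment, $g$ asks $\me$, using Lemma \ref{Karithmetical} with the finite answers coded in as parameters, whether the segment reaches a halt or a new query. This is a $\Sigma^0_1(\vec X)$ question. If the answer is ``never'', $g$ outputs $1$. Otherwise $g$ runs the segment to completion. If the segment halts, $g$ outputs $0$.

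\textbf{Step 3: verification.} We prove (1) and (2) simultaneously by induction. For wellfounded trees we induct on rank, which is essentially $\rho^\me$ extended to divergent-but-wellfounded computations, and we show that $\{g\}$ converges with the correct flag. For illfounded trees we show divergence by following the infinite descending path: along that path each node's $g$-call waits on a non-total sequence of recursive $g$-calls, so no $g$-computation on the path converges. The fixed point from Theorem \ref{KRT} makes this self-reference legitimate, since $g$ is obtained from a primitive recursive transformation $F(g,\dots)$ of the description above.

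The main obstacle is the bookkeeping in Step 1, namely arranging that the $\me$-calls made by $g$ occur only after their inputs are known to be total. We must also check that $\frak{T}^\me$ for $g$ on an illfounded instance really is illfounded, rather than merely divergent. Some care is also needed to define ``wellfounded but divergent'' precisely, since $\Gamma^\me$ only ranks convergent computations. We would define an auxiliary monotone operator that also accepts nodes whose first failing query has all children in the set.
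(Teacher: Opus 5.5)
There is a genuine gap in Step 2. Your construction fails when the master run of $P_{\bar e}$, or of any node of the tree, makes infinitely many query calls that all return, without ever halting, i.e.\ $k(\gamma)=\omega$. For example, let $P_{\bar e}$ loop forever, issuing at each pass a query $?Q(z,\vec m,\vec X)?$ with $z$ an index for the constant function $0$. Then $\{\bar e\}^\me(\vec m,\vec X)\uparrow$ while $\mathfrak{T}^\me(\bar e,\vec m,\vec X)$ is wellfounded, so the lemma requires $\{g\}((\bar e,\vec m),\vec X)\downarrow 1$. Your $g$ only ever asks the local $\Sigma^0_1$ question whether the \emph{next} segment reaches a halt or a query. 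Here the answer is always ``a query'', each query is processed successfully, and $g$ simulates forever, so $\{g\}\uparrow$. No finite sequence of such local questions settles whether the whole run halts, since that depends on infinitely many oracle answers. The failure also propagates upward: such a node's flag is undefined, so the flag query at its parent never returns. Your Step 3 induction therefore breaks exactly at these nodes. Relatedly, your auxiliary operator for ``wellfounded but divergent'' is too narrow. It should accept any node all of whose actually issued queries have all their children already accepted, whether the master then halts, gets stuck, loops without queries, or issues $\omega$ many queries.

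The paper avoids this with a single global test. $\{g_0\}(((\bar e,\vec m),k),\vec X)$ simulates the run for $k$ steps and outputs $0$ if it has halted and $1$ otherwise. Then $g$ makes one query returning $\me(\lambda k.\{g_0\}(((\bar e,\vec m),k),\vec X))$. A run with infinitely many returning queries still yields a total sequence, constantly $1$. You can repair your proof by replacing Step 2 with this device while keeping your Step 1 inside the $k$-step simulation. At each simulated query step, run your recursive flag test on the children; if some flag is $1$, treat the run as stuck and output $1$ for all larger $k$. This combination is worth making, because your Step 1 covers a case the paper's brief sketch passes over. If $g_0$ simulates a query step simply by issuing the query, then a child whose machine fails to halt (a wellfounded situation) makes $g_0$, and hence $g$, diverge. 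That situation does not arise in the paper's application in Lemma \ref{FtoK}, but it does for the lemma as stated.
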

\pf
$\{ g_0 \}^\me  ( ((\bar{e}, \vec{m}),k), \vec X)$ simulates a run of computation of $\{
\bar e \} ^\me ( \vec{m}, \vec X) $ for $k$ steps.
If after the $k$ steps are completed this simulation of $\{
\bar e \}  ( \vec{m}, \vec X) $ has halted, then we specify that $\{ g_0 \}^\me  (
((\bar{e}, \vec{m}),k), \vec X)\da 0$. Otherwise we specify that \linebreak  $\{ g_0 \}  ^\me( ((\bar{e},
\vec{m}),k), \vec X)\da 1$. Now let $\{g\}^\me$ be the procedure that launches the single query $?Q(g_0, (\bar e ,   \vec{m}), \vec X)$ and halts with output  $\me(\lambda k.\{ g_0 \} ^\me (((\bar{e}, \vec{m}),k), \vec X))$. Then $\{g\}^\me$ satisfies the requirements of the lemma.

Note firstly that the definition of $\{g\}^\me$ is independent of, or is uniform in, $\bar e$, and secondly that $\{g\}$ will be divergent with illfounded computation tree, exactly when  $\{\bar e\}^\me(\vec m, \vec X)$ has such.
\qed\\

In the sequel we shall let greek letters such as $\gamma,\delta$ stand in for 
computations and simply write $\gamma$ for $\{e\}^{\mj}(\vec m, \vec X)$
We let $\gamma_p  (p < k (\gamma))$ (where $k (\gamma) \leq \omega$)
enumerate the query calls $? Q^{\mj} (e_p, \vec m_p, \vec X) ?$ occurring in turn in the run of
a computation $\gamma$ as $\gamma_p = (e_p, \vec m_p, \vec X)$. Of course if $k (\gamma) =
\omega$ then $\gamma\uparrow\!.$

\begin{lemma}
  \label{subcalls} (i) There is an index function $f_0$ so that $$\{f_0 \} ^{\me}(k,e,\vec m,\vec X)\downarrow 
    \pa{e_k, \vec m_k, \vec X} \mbox{ for } k < k(e,\vec m,\vec X)$$ where the $k$'th query call
 is  $? Q (e_k,\vec m_k, \vec X) ?$
   during a  run (possibly divergent) of $\{e\}^\me(\vec m,\vec X)$. In the notation above if $\gamma = (e, \vec m,\vec X)$ then  for $k < k(\gamma)\leq\omega$, 
   $$\{f_0\}^\me(k,\gamma) =  
    \gamma_k .$$
  (ii) There is a variant index $f_1$ so that  $f_1(k,\gamma) = \{f_0\}^\me(k,\gamma)= \gamma_k$ for $k<k(\gamma)$ whilst $f_1(k,\gamma)= \gamma_{k(\gamma)-1}$ if $\omega> k\geq k(\gamma)$.
\end{lemma}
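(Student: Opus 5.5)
We construct $f_0$ by a simulation procedure, using the Recursion Theorem (Theorem \ref{KRT}) only implicitly through a universal simulator for $\me$-computations. On input $(k,e,\vec m,\vec X)$, the procedure $\{f_0\}^\me$ runs the Turing machine $P_e$ on $\vec m$ with oracle $\vec X$ step by step, keeping a counter $j$ of the query calls met so far.

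The simulation proceeds as follows.
\begin{itemize}
\item Ordinary steps, and oracle consultations of $\vec X$, are copied directly.
\item When the simulated machine reaches its $j$'th query instruction $?Q(e_j,\vec m_j,\vec X)?$, the indices $e_j,\vec m_j$ can be read off the simulated tape by finitary means, since they are integers.
\item If $j=k$, we halt with output $\pa{e_k,\vec m_k,\vec X}$; strictly, we output the integer code $\pa{e_k,\vec m_k}$, the real $\vec X$ being a fixed parameter.
\item If $j<k$, the simulation must continue past this query, which needs the value $n_j=\me(\lambda p.\{e_j\}^\me(\vec m_j,p,\vec X))$. So $\{f_0\}^\me$ itself launches the genuine query $?Q(e_j,\vec m_j,\vec X)?$, receives $n_j$, writes it where $P_e$ would, increments $j$, and carries on.
\end{itemize}

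To verify correctness, argue by induction on $j\le k$. If the run of $\{e\}^\me(\vec m,\vec X)$ has at least $k+1$ query calls, so that $k<k(\gamma)$, then queries $0,\dots,k-1$ all returned values in the real run; otherwise the $k$'th call would never be reached. Hence the same calls return the same values in the simulation, the simulated configuration agrees with the real one up to the $k$'th call, and $f_0$ halts with $\gamma_k$. The subtree of $\frak{T}^\me(f_0,(k,e,\vec m),\vec X)$ is built from the trees for $\gamma_0,\dots,\gamma_{k-1}$, which are wellfounded, so the computation converges in the sense of $\Gamma^\me_\infty$. Formally, the convergent query answers are used in $\Gamma^\me$ at the appropriate stage, and one checks the rank bound by noting that every subcall of $f_0$ is a subcall of $\gamma$. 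Uniformity in $k,e$ holds because the program text of $f_0$ does not depend on them.

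For (ii), define $f_1$ by the same simulation, but instead of aborting it tracks when the simulated $P_e$ halts. If the simulation halts before the $k$'th query, having seen exactly $k(\gamma)$ queries, output the last recorded query $\gamma_{k(\gamma)-1}$. If the simulation reaches query $k$, output it as before. The case $k(\gamma)=0$ needs a convention, for instance outputting a fixed default; presumably the statement assumes $k(\gamma)\ge1$.

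The main obstacle is $f_1$ when $k\ge k(\gamma)$ but $\gamma$ neither halts nor makes another query: the simulation then runs forever, so $f_1$ is not total. We can only guarantee that $f_1(k,\gamma)$ converges to $\gamma_{k(\gamma)-1}$ when $\gamma$ halts, or else read the statement as the partial function defined whenever the relevant run terminates.

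To handle this case genuinely, I would first try using $\me$ to decide, before each further step, whether the run from the current configuration ever reaches another query or halts. This is an arithmetical (indeed $\Sigma^0_1$) question about an oracle Turing run with no further queries, and by Lemma \ref{Karithmetical} it is recursive in $\me$. Precisely, one asks via a single $Q$-call whether there is a step $s$ at which the pure Turing continuation hits a query or halt instruction. If the answer is no, output the last recorded $\gamma_{k(\gamma)-1}$ immediately. This makes $f_1$ total whenever the first $k(\gamma)$ queries converge, which is the intended reading.
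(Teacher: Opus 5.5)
The paper states this lemma without proof. It treats it, like its feedback analogue Lemma~\ref{L5}, as a routine simulation exercise. Your argument for (i) is exactly that simulation, and it is correct, including your point that the output is the integer code $\pa{e_k,\vec m_k}$. No Recursion Theorem is needed, even implicitly, because a query may carry indices computed at run time. Your $\Sigma^0_1$ test via $\me$ in (ii) is also the right device.

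What needs correcting is your diagnosis of the obstacle in (ii). There are two different ways in which $\gamma$ can ``neither halt nor make another query'' after its last call $\gamma_{k(\gamma)-1}$:
\begin{itemize}
\item[(a)] that call returns and the pure Turing continuation then runs forever; your test handles this case;
\item[(b)] that call never returns, because some $\{e_p\}^\me((\vec m_p,i),\vec X)$ with $p=k(\gamma)-1$ diverges.
\end{itemize}
In case (b) your $f_1$ must launch this query to get past it, so it is stuck, and no $\Sigma^0_1$ question helps.

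Nor can any other $f_1$ succeed in case (b). Let $e^*$ be a program which, on input $(d,n)$, does three things in turn:
\begin{itemize}
\item it launches $?Q(u,(d,n))?$, where $\{u\}^\me((d,n),i)\simeq\{d\}^\me(n)$ for every $i$;
\item it then launches $?Q(c,0)?$, for a fixed index $c\neq u$ of a constant function;
\item it then halts.
\end{itemize}
Put $\gamma_{d,n}=(e^*,(d,n))$. Then $k(\gamma_{d,n})$ is $2$ or $1$ according as $\{d\}^\me(n)\da$ or $\{d\}^\me(n)\ua$. Hence (ii), read literally, forces $f_1(1,\gamma_{d,n})=\pa{c,0}$ if and only if $\{d\}^\me(n)\da$. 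That would decide convergence of $\me$-computations by a function Kleene recursive in $\me$, which the usual diagonal argument rules out; equivalently, a complete $\Pi^1_1$ set would be $\Delta^1_1$.

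So the hypothesis that all query calls of $\gamma$ return, together with $k(\gamma)\geq 1$ as you note, is not merely the ``intended reading''. It is necessary, and it is precisely the Kleene counterpart of the hypothesis $\pa{e}^X(m)\Downarrow$ in Lemma~\ref{L5}. You should state (ii) with that hypothesis and justify the restriction by this argument rather than by appeal to intention. Under that hypothesis your construction does give a total $f_1$ with the required values.
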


Given a subcomputation call $\gamma_p=   \pa{e_p, \vec m_p, \vec X} \mbox{ for } p < \omega$ of some $\gamma$ we additionally write $\gamma_{p,i} $ for the $i$'th calculation
$\{e_p\}^\me((\vec m_p,i),\vec X))$. Then note that $\rho(\gamma)=\sup^+\{\rho(\gamma_{p,i})\mid i,p<\omega\}$. We shall use this  in the next theorem.

\begin{theorem}[(Gandy) Stage Comparison for Kleene Recursion]  \label{OrdComp}\mbox{} \\ Suppose $\me$ is Kleene recursive in $\mj$. There is a  Kleene recursive function $G=G^\mj$, such
  that for any  $\gamma = \pa{e^0, m^0, X^0}$
  and $\delta = \pa{e^1, m^1, X^1} $ with subcomputation calls $\pa{\gamma_p\mid p<k(\gamma)}$ and $\pa{\delta_p\mid p<k(\delta)}$:\\
  
  (i) $\rho (\gamma) < \omega_1 \wedge \rho (\gamma) \leq \rho  (\delta) \imp
  G (\gamma, \delta) \simeq 0 ;$
  
  (ii) $\rho (\delta) < \omega_1 \wedge \rho (\delta) < \rho (\gamma) \imp G
  (\gamma, \delta) \simeq 1.$
\end{theorem}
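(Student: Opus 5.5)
We plan to define $G$ by the Recursion Theorem for Kleene Recursion (Theorem \ref{KRT}). That is, we write down a partial recursive in $\mj$ functional $F(\hat e,\gamma,\delta)$ which may call $\{\hat e\}^\mj$ on pairs of subcomputations, and take $G=\{\bar e\}^\mj$ for the fixed point $\bar e$. The guiding identity is $\rho(\gamma)=\sup^+\{\rho(\gamma_{p,i})\mid p<k(\gamma),i<\omega\}$. It gives
\[
\rho(\gamma)\leq\rho(\delta)\iff \all p\,\all i\,\ex q\,\ex j\ \rho(\gamma_{p,i})<\rho(\delta_{q,j})\ \ (\text{roughly}),
\]
with the usual care for successor versus limit. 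So the comparison reduces to comparisons of strictly lower-ranked pairs. We will use the symmetric pair of relations ``$\rho(\gamma)\le\rho(\delta)$'' and ``$\rho(\delta)<\rho(\gamma)$''. In the recursive call these are evaluated as $\{\hat e\}^\mj(\delta_{q,j},\gamma_{p,i})$ with arguments swapped, so that both strict and non-strict comparisons are available.

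\textbf{Definition of $F$.} Using Lemma \ref{subcalls}(ii) (the index $f_1$) we can enumerate, uniformly in $\gamma$, the query calls $\gamma_p$ and hence the calculations $\gamma_{p,i}$. Since $\me$ is recursive in $\mj$, any arithmetical combination of values already obtained can be evaluated by Lemma \ref{Karithmetical}. We also use closure of partial recursive functionals under substitution to feed the real $\lambda\langle p,i,q,j\rangle.\{\hat e\}^\mj(\gamma_{p,i},\delta_{q,j})$ (and its swapped version) into $\me$. $F(\hat e,\gamma,\delta)$ then proceeds as follows.

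First, it simulates $\gamma$ and $\delta$ in parallel step by step, up to but not including their next query. If $\gamma$ halts before its next query while $\delta$ has not yet halted, or they halt together, it outputs $0$; symmetric rules give $1$ when $\delta$ halts first. These are the base cases, where no sub-comparison is needed.

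Otherwise, at a query stage, it asks via $\me$ (applied to the real of recursive comparisons of the $i$-th calculations of the current queries $\gamma_p,\delta_q$) whether every $\gamma_{p,i}$ is dominated by some $\delta_{q,j}$, and conversely. It then either outputs, or advances the simulation past the query using the now-known answer.

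\textbf{Verification.} We prove (i) and (ii) simultaneously by induction on $\min(\rho(\gamma),\rho(\delta))$. When this minimum is attained at a convergent computation, every recursive call $G(\gamma_{p,i},\delta_{q,j})$ made has $\min$ strictly smaller. So by the induction hypothesis each such call converges with the correct answer, even when the other side is divergent (rank $\omega_1$). The $\me$-query therefore receives a total real, and the returned $\mj$-values are correct. The correctness of the output then follows from the rank identity above.

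\textbf{Main obstacle.} The hard part is making $G$ converge when only one side converges. The other side may have an illfounded tree or an infinite run, and a naive comparison would freeze on it. The construction must never wait on a subcomputation of the larger side except through calls $G(\cdot,\cdot)$ whose own minimum rank is smaller. Equivalently, it must compare \emph{pairs} $(\gamma_{p,i},\delta_{q,j})$ rather than evaluate $\delta_{q,j}$ directly. It must also interleave step counting so that a $\delta$ running $\omega$ steps without halting is detected only through comparison, never by waiting. I would first handle this by restricting each recursive call to the queries made up to the current synchronized stage, using $f_1$ to freeze the index at the last query. I would then check carefully the successor/limit bookkeeping in the ``$\le$'' versus ``$<$'' clauses, which is where I expect errors.
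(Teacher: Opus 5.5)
Your skeleton matches the paper's. You take a fixed point $G=\{\bar e\}^{\mj}$ from Theorem \ref{KRT}, make recursive calls only on pairs $(\gamma_{p,i},\delta_{q,j})$ so that the minimum rank drops, use $\me$ to quantify over them, and induct on $\min\{\rho(\gamma),\rho(\delta)\}$. The paper, however, does not simulate the runs in lockstep. It quantifies once over \emph{all} subcalls of both runs, playing clause (B) $\all\gamma_{k,i}\,\ex\delta_{l,j}\,G(\gamma_{k,i},\delta_{l,j})\simeq 0$ against clause (A), packaged as $\me^{\mo}(\lambda p.\me(\lambda q.\cdots))$. For convergent $\gamma,\delta$ it relies on $\rho(\gamma)\le\rho(\delta)\iff\all\gamma_{k,i}\,\ex\delta_{l,j}\,\rho(\gamma_{k,i})\le\rho(\delta_{l,j})$. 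Note the $\le$: your ``$<$'' version fails already when both make one query whose calculations halt without queries. Your lockstep control, as described, breaks at two points.

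First, the base rules race running times instead of comparing ranks. If $\gamma,\delta$ both halt without queries (rank $0$) and $\delta$ halts at an earlier step, (i) demands $0$, but your symmetric rule outputs $1$. When $\delta$ halts you may output $1$ only if $\gamma$'s current segment does not end in a halt. Also, simulating ``up to the next query'' hangs if that segment never ends, although (i) or (ii) then demands a value. You note that infinite runs must not be waited for, but no comparison of subcalls can detect a query-free infinite segment. Its fate (halt, query, or run forever) is arithmetical in $X$ and the current configuration, so it must be decided via $\me$ (Lemma \ref{Karithmetical}).

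Second, comparing only the current queries $\gamma_p,\delta_q$ need not terminate. Let $\gamma$ converge after a single query, one of whose calculations has rank $1$. Let $\delta$ make infinitely many queries, all of whose calculations halt without queries. Then $\rho(\delta)=\omega_1$, and (i) requires $G(\gamma,\delta)=0$. Each $\delta_q$ is dominated by $\gamma_0$ and not conversely. Outputting $1$ there is wrong, and advancing $\delta$ repeats forever, so $G(\gamma,\delta)\ua$. Every individual recursive call converges here, so restricting calls to the ``current synchronized stage'' does not help. Nor does $f_1$ of Lemma \ref{subcalls}(ii): past a query on which $\delta$ is stuck, it would have to know that some $\delta_{q,j}$ diverges. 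A merge never needs $f_1$ anyway, since it passes $\delta_q$ only after calls returning $1$ certify that all $\delta_{q,j}$ converge.

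The missing idea is Lemma \ref{Kconvergence}. By the induction hypothesis every call made by the merge converges, so via $\me$ you can decide whether the merge halts. If it does not, you can decide which side it advanced only finitely often. Output $0$ if that side is $\gamma$ and $1$ if it is $\delta$, since the other side made infinitely many queries and so diverges. This is the analogue of the final query ``Does $P$ halt?'' in the feedback proof of Section \ref{GSCT}.

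Your worry is well founded. The paper's inference from $\rho(\gamma)\le\rho(\delta)$ to $\all k\,\all i\,\ex l\,\ex j\,G(\gamma_{k,i},\delta_{l,j})\simeq 0$ uses $\rho(\delta)=\sup^+\rho(\delta_{l,j})$. That fails for a $\delta$ that diverges by running forever: for $\gamma$ as above and $\delta$ looping without queries, (B) fails and (A) holds vacuously.
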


{\pf} 
Define a partial function $F$ as follows:\\

\nod$
\begin{array}{rcl}

 F(e,\gamma, \delta) = &0&\mbox{ if }\rho ( \gamma ) =0\\

&1&\mbox{ if }\rho ( \gamma ) \neq 0
\wedge   \rho ( \delta ) =0 \\

&1&\mbox{ if } \ex \gamma_{k,i} \all \delta_{l,j}
\{e\}^\mj(\gamma_{k,i}, \delta_{l,j}) \downarrow 1;\, (A)\\

 & 0 &\mbox{ if } 
 { \all \gamma_{k,i} \ex  \delta_{l,j}  
  \{e\}^\me (\gamma_{k,i}, \delta_{l,j}) \downarrow 
 0}. \,(B)\\

\end{array}
$\\

\nod{\tmem{Claim} $F$ is partial recursive in $\mj  $}.

{\pf}of {\em Claim}. 
The first two lines in the definition of $F$ by cases, are predicates
arithmetic in $X$ and so by Lemma \ref{Karithmetical}, they are Kleene recursive in $\mj$. Suppose  then both $\rho (
\gamma ) , \rho ( \delta ) >0$. 
We set $\me^\mo(X) =1\dot - \me(\lambda p.[1\dot - X(p)])$ to be the dual of $\me$. Then we have:\\

$
\begin{array}{rcl}
\me^\mo(X) & = &0, \mbox{ if } \all j X(j)=0;\\
&=&1, \mbox{ if } \ex j X(j)\neq 0.
\end{array}
$\\

\nod Let $\pi((u)_0,(u)_1)=u$ be some ordinary recursive bijective pairing function $\pi:\omega\times \omega \equi \omega$ with unpairing functions $(u)_0,(u)_1$.
Lines (A) and $(B)$ above then can be expressed together as 
$$
F(e,\gamma,\delta)  = \me^\mo(\lambda p. \me(\lambda q. \{e\}^\mj(\gamma_{(p)_0,(p)_1}, \delta_{(q)_0,(q)_1})).
$$
As $\me$ is recursive in $\mj$, so is $\me^\mo$, and hence so is $F$.
{\qed} Claim\\

By the Recursion Theorem \ref{KRT} there is $\bar{e}$ with $\{\bar{e}\}^{\mj}
( u,v ) =F ( \bar{e} ,u,v )$, and we set
$$G ( u,v ) = \{\bar e\}^\mj ( u,v ).$$

\nod We have that:\\

\nod{\tmem{Claim 1}}

$
\begin{array}{lcl}

G ( \gamma , \delta ) = &0&\mbox{ if }\rho ( \gamma ) =0\\

&1&\mbox{ if }\rho ( \gamma ) \neq 0
\wedge   \rho ( \delta ) =0; \mbox{ otherwise: }\\

&1&\mbox{ if } \ex \gamma_{k,i} \all \delta_{l,j} G (
\gamma_{k,i} , \delta_{l,j} ) \simeq 1; (A)\\

 & 0 &\mbox{ if } 
{ \all  
\gamma_{k,i}  \ex \delta_{l,j} G ( \gamma_{k,i} , \delta_{l,j} ) \simeq 0}; (B)\\

\end{array}
$\\

We now prove that $G$ satisfies the statements  {\tmem{(i)}} and {\tmem{(ii)}} of the Theorem, by induction on
$\sigma \dfs \min \{ \rho ( \gamma ) , \rho ( \delta ) \}$ $ < \omega_{1}$. (If
both $\rho ( \gamma ) = \rho ( \delta ) = \omega_{1}$ there is nothing to do.)
Thus at least one of these ranks is countable.

$\sigma =0:$ then either $\rho ( \gamma ) =0$ and $G ( \gamma , \delta ) =0$
or $0= \rho ( \delta ) < \rho ( \gamma )$ and so $G  ( \gamma , \delta ) =1$.

$0< \sigma$: as inductive hypothesis we assume{\tmem{ (i)}} and {\tmem{(ii)}}
hold for all computation pairs $\bar{\gamma} , \bar{\delta}$ with $\min \{ \bar{\gamma} ,
\bar{\delta} \} < \sigma$.

Suppose first that $\rho ( \gamma ) < \omega_{1} \wedge \rho ( \gamma ) \leq
\rho ( \delta )$. Then $$\all i,j,k,l \: \min \{ \rho^{} (
\gamma_{k,i} ) , \rho ( \delta_{l,j} ) \} < \sigma,$$ and the inductive hypothesis for {\tmem{(i)}}
and {\tmem{(ii)}} applies and we therefore have:\\

(1) $\rho ( \gamma_{k,i} ) \leq \rho ( \delta_{l,j} ) \imp G ( \gamma_{k,i} ,
\delta_{l,j} ) \simeq 0;$

(2) $\rho^{} ( \delta_{l,j} ) < \rho ( \gamma_{k,i} ) \imp G ( \gamma_{k,i} ,
\delta_{l,j} ) \simeq 1.$\\

As these antecedents are mutually exclusive, we have that $\all i,j,k,l \:
 $ $G ( \gamma_{k,i} , \delta_{l,j} ) \da$. As $\rho^{} ( \gamma )
\leq \rho ( \delta )$ by assumption, we have $\all k\all i \ex l \ex jG (
\gamma_{k,i} , \delta_{l,j} ) \simeq 0$. 
Then by (1) and clause $(B)$,  we also have $G ( \gamma , \delta ) =0.$

Suppose secondly, that $\rho ( \delta ) < \rho ( \gamma ) \leq \omega_{1}$.
Then $\all l,j \rho ( \delta_{l,j} ) < \rho ( \delta ) = \sigma$, and by the
inductive hypothesis again on {\tmem{(i)}} and {\tmem{(ii)}} we have that for any $i,j,k,l$, (1) and (2) again hold.

As $\rho ( \delta ) < \rho ( \gamma )$ there are $k_{0}, i_{0} $ least with $\rho (
\delta ) \leq \rho^{} ( \gamma_{k_{0},{i_0}} )$. This implies $\all l,j \;\rho (
\delta_{l,j} ) < \rho ( \gamma_{k_{0},i_0} ) .$ By using (2) again, and the defining
properties of $F$ and $G$ at $(A)$, we have $G ( \gamma , \delta ) =1.$
{\qed}

\subsection{\normalsize Gandy Selection}

This argument is derived from the Kleeneian equational calculus version proof in
\cite{Hi78}
 Part VI.
\begin{theorem}[Gandy Selection for Kleene Recursion]\label{KSelection} For any type 2 functional $\mj$, there exists$\textsf{}$ a
  function $\tmop{Sel}^\mj$ ${\in}^{< \omega} 2$, partial
  recursive  in $\mj$, such that for all $e, \vec m, \vec X$ the following are
  equivalent:
  
  (i) $\ex p \in \omega . \{e\}^\mj ( p, \vec{m}, \vec X ) \downarrow\,$ ;
  
  (ii) $ \{e\}^\mj ( \tmop{Sel}^\mj ( e, \vec{m} ,\vec X), \vec{m}, \vec X
  ) \downarrow\,$.
\end{theorem}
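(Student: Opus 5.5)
We follow Gandy's original strategy and build $\tmop{Sel}^\mj$ by the Recursion Theorem \ref{KRT}, using the Stage Comparison Theorem \ref{OrdComp} to choose, among the convergent computations $\{e\}^\mj(p,\vec m,\vec X)$, one of least rank. We take the theorem in the normal case, $\me$ Kleene recursive in $\mj$, since Theorem \ref{OrdComp} assumes it. The direction (ii)$\Imp$(i) is trivial, with witness $p=\tmop{Sel}^\mj(e,\vec m,\vec X)$. All the work is in (i)$\Imp$(ii), where we must also make sure $\tmop{Sel}^\mj$ itself converges.

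\textbf{Construction.} Write $\gamma_p=\pa{e,(p,\vec m),\vec X}$. By the Recursion Theorem we define a partial function $H^\mj(z,p,e,\vec m,\vec X)$ that searches upward from $p$ and uses its own index $z$. On input $p$ it asks the single query
\[
?\,Q\bigl(\lambda q.\,G(\gamma_p,\gamma_{p+1+q})\bigr)?,
\]
feeding the resulting real to $\me$. This is legitimate because $\me\le\mj$ and because substitution (the Lemma on functional substitution) lets us present $\lambda q.\,G(\cdot,\cdot)$ as a real to the oracle. If every $G(\gamma_p,\gamma_{p'})$ with $p'>p$ converges with value $0$, then $H$ outputs $p$. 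If some value is $1$, then $H$ outputs $\{z\}^\mj(p+1,e,\vec m,\vec X)$, that is, it recurses to $p+1$. We then set $\tmop{Sel}^\mj(e,\vec m,\vec X)=H(0,e,\vec m,\vec X)$.

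This version has a problem. For $p'$ with $\rho(\gamma_{p'})=\rho(\gamma_p)=\omega_1$, Theorem \ref{OrdComp} gives no information, so $G$ may diverge and the $\me$-query hangs. To repair it, at stage $p$ we evaluate, for each $p'$, both $G(\gamma_p,\gamma_{p'})$ and $G(\gamma_{p'},\gamma_p)$. The intended test is then:
\[
\mbox{``}\all p'\,[\,G(\gamma_p,\gamma_{p'})\simeq 0\,]\mbox{''}.
\]
It is safer still to run Gandy's original scheme and select the pair $(p,\text{rank})$ minimal in a lexicographic sense. In that scheme one fixes $p_0$ with $\rho(\gamma_{p_0})$ least among convergent $\gamma_p$, and notes that for this $p_0$ every $G(\gamma_{p_0},\gamma_{p'})\simeq 0$ converges, by clause (i) of Theorem \ref{OrdComp}, because $\rho(\gamma_{p_0})<\omega_1$ and $\rho(\gamma_{p_0})\le\rho(\gamma_{p'})$. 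The selector must therefore avoid getting stuck at an earlier $p<p_0$ with $\rho(\gamma_p)=\omega_1$.

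\textbf{Main obstacle and remedy.} To avoid evaluating anything at a divergent $\gamma_p$, we use the Kleene-style trick of letting $\mj$ choose. The selector does not test candidates in order. It makes one query asking, for each $p$, whether $\gamma_p$ is rank-minimal:
\[
\mbox{``}\all p'\,G(\gamma_p,\gamma_{p'})\simeq 0\mbox{''}.
\]
This is organised so that the subcomputation for index $p$ returns a value whenever some minimal-rank $\gamma_{p_0}$ exists. For $p\neq p_0$ the comparison $G(\gamma_{p_0},\gamma_p)\simeq 0$ converges, and from it the subcomputation can decide ``not minimal'' if $\rho(\gamma_{p_0})<\rho(\gamma_p)$. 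It does this by running, in parallel through an $\me$-query over pairs, the comparisons with all $p'$, and accepting the first witness $p'$ with $G(\gamma_{p'},\gamma_p)\simeq 1$. Checking that this parallel evaluation is expressible by a single $\me^\mo/\me$ nesting without ever demanding a divergent value is the step I expect to be delicate. If it does not go through, we fall back on the proof in \cite{Hi78} Part VI, transcribed into the present machine setting, where the search is run over pairs $(p,p')$ using the rank information $\rho(\gamma)=\sup^+\rho(\gamma_{k,i})$.

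Finally, we verify by induction on $\rho(\gamma_{p_0})$ that the whole computation converges. Its output is the least $p$ of minimal rank, which yields (ii).
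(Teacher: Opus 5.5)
Your proposal has a genuine gap, and it sits exactly at the step you call ``delicate''. As you describe it, that step cannot be carried out. Kleene recursion in $\mj$ has no parallel or ``first witness'' search. A query returns $\mj(y)$ only if \emph{every} component $y(k)$ converges, and a sequential search hangs at the first divergent component. Suppose $\rho(\gamma_p)=\omega_1$ and some other $\gamma_{p'}$ also has rank $\omega_1$. Then the Stage Comparison Theorem says nothing about $G(\gamma_p,\gamma_{p'})$ or $G(\gamma_{p'},\gamma_p)$. So the subcomputation deciding ``is $\gamma_p$ rank-minimal?'' may hang, and your single outer query over all $p$ hangs with it. Your remark that $G(\gamma_{p_0},\gamma_p)$ converges does not help: the subcomputation for $p$ does not know $p_0$, and finding such an index is the selection problem itself. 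The witness test is also reversed, since $G(\gamma_{p'},\gamma_p)\simeq 1$ says $\rho(\gamma_p)<\rho(\gamma_{p'})$. Falling back on Hinman's proof is not an argument. Your closing claim that the output is the least $p$ of minimal rank is neither justified nor needed.

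The missing idea is Gandy's self-referential comparison, which the paper uses. Compare $\gamma_0$ not with the other $\gamma_{p'}$ but with the recursive call of the selector itself. Your first construction, which already carries its own index $z$, was one step away from this. The construction runs as follows.
\begin{itemize}
\item Take $e^+$ with $\{e^+\}^{\mj}(p,\vec m,\vec X)\simeq\{e\}^{\mj}(p+1,\vec m,\vec X)$.
\item Via the Recursion Theorem, let $\mathrm{Sel}(e,\vec m,\vec X)$ compute $G(\gamma_0,\delta)$, where $\delta$ is the computation $\mathrm{Sel}(e^+,\vec m,\vec X)$.
\item On value $0$, output $0$.
\item On value $1$, we have $\delta\downarrow q$. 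Compute $G(\gamma_0,\gamma_{q+1})$ and output $0$ or $q+1$ according as it is $0$ or $1$.
\end{itemize}
Now argue by induction on the least $p$ with $\gamma_p\downarrow$, not on a minimal rank.
\begin{itemize}
\item If that $p$ is $0$, then $\rho(\gamma_0)<\omega_1$. Both comparisons converge, and each outcome yields a convergent index.
\item If that $p$ is positive, the induction hypothesis for $e^+$ gives $\delta\downarrow q$ with $\gamma_{q+1}\downarrow$, while $\rho(\gamma_0)=\omega_1$. Hence both comparisons return $1$ and the output is $q+1$.
\end{itemize}
Every $G$ ever invoked thus has an argument of countable rank, which is exactly what your scheme cannot guarantee. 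Note also that the selected index need not be least, nor of least rank.
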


{\pf} Without loss of generality, we take single variables $m,X$. There is a p.r. function $k$ so that for any index $e$, $k(e)  = e^+$ is
an index so that
$$\{e^+ \}^\mj  (p, m, X) \simeq \{e\} ^\mj (p + 1, m,X)\mbox{ for }p \in \omega.$$ 
We
take $G(u, v)$ from the Stage Comparison Theorem \ref{OrdComp}, and define
$F (f, p, m, X)$ to be the functional which accords with the following
procedure.

Step 1 It computes $G( \la e, (0, m), X \ra, \pa{f, (e^+, m), X})$.

Step 2 If this value is $0$, then that is the final value for $F (f, p, m,
X)$. If this value is $1$, it then computes $G( \la e,  ( 0, m), X \ra, \la e,
\{ f \}^\mj((e^+, m),X ) + 1, m ,X\ra) $.

Step 3 If this latter value is 0, then that is the final value. If this value
is $1$, it then computes $\{ f \}^\mj ((e^+, m), X) + 1$ as its final value.\\

\noindent By the Kleene Recursion Theorem there is $\bar{f}$ so that $\{ \overline{f} \} ^{\mj} (p,
m,X) \simeq F (\bar{f}, p, m,X)$. We take $\tmop{Sel}^{\mj}  = \{
\overline{f} \}^{\mj} $. We now show that this works.

Let $p (e, m,X)$ be the least $p$ such that $\{e \}^{\mj}(p, m,X)\da$ if there is
such a $p$.  We show the equivalence of {\em(i)} with {\em{(ii)}} by induction on
$  p (e  , m, X)$. {\em(ii)} $ \Imp$ {\em(i)} is immediate.

\nod {\em Case 1} \ $p (e, m, X) = 0$.

Then $\{e\}^\mj  (0, m,X)\da$.
Hence by properties of $G$, for some $i <
2$
$$G( \la e, (0, m), X \ra, \pa{\bar{f}, (e^+, m), X} ) =
i.$$

\nod If $i = 0$: then $\tmop{Sel}^{\mj}  (e, m,X) = 0$ as required.

\nod If $i = 1$: this implies $\rho (  \bar{f}, (e^+, m), X)   < \rho
(e, ( 0, m), X)$ which in turn means that there is a $q$ with $\{f\}^{\mj}
(e^+, m,X) = q$. Again as $\{e\}^\mj  (0, m,X)\da$
we have that for some $j < 2$,
$$G\left( \la e, ( 0, m), X \ra, \pa{e ,( q + 1, m), X} \right) = j.$$

\nod Again if $j = 0$ then $\tmop{Sel}^{\mj}(e, m,X) = 0$ as required.

If $j = 1$ then we have $\rho( e , (q + 1, m), X ) < \rho( e,( 0, m), X )   $ and
thence $\{ e\} ^{\mj} (q + 1, m,X)$. However then $\tmop{Sel}^{\mj}  (e, m,X) \simeq \{
\bar{f}\}^{\mj}  (e^+, m, X) + 1 = q + 1$, as we wanted.

\

\nod {\em Case 2} Now suppose that $p (e, m, X) > 0$. By our definitions $p (e, m, X) = p (e^+,
m, X) + 1$. So we can apply the induction hypothesis to $p (e^+, m, X)$ and
then there will be $q$ such that
$$\{ \bar{f} \} ^\mj (e^+, m,X) \simeq {  \tmop{Sel}^\mj}  (e^+, m,X) = q 
\mbox{ \& }\{ e \}^\mj  (q + 1, m,X) \simeq \{ e^+ \}^\mj  (q, m,X)$$
\noindent and with both the latter defined. In particular $\{ \bar{f} \} ^\mj (e^+, m,X)\da \,
$. By our assumption on $p$ we have 
$\{e \}^{\mj}(0, m,X)\!\not\!\da$,
and thus
 $$\rho (\bar{f}, (e^+, m), X) < \rho (e, (0, m), X) =\omega_1  $$ and thus
$$G\left( \la e, (0, m), X \ra, \pa{e , (q + 1, m), X} \right) = 1.$$
Hence ${ \tmop{Sel} ^\mj}  (e, m,X) = \{ \bar{f} \}^\mj (e^+, m,X) + 1 = q + 1$
again as required for ${ \tmop{Sel} }^\mj $. \\ \mbox{ }{\qed}\\

\subsection{\normalsize Definitions and Stage Comparison for Feedback Computation \label{GSCT}}
We define a monotone inductive procedure for building up the class of
feedback computations computable in some $X : \omega \imp 2$. This will be
done uniformly in $X$, and in fact for all  $X$ at once. The operator $I$ has domain contained
in $\left( \omega^2 \times^{\omega}\!\! \omega \, \right) \times (\{ \downarrow,
\uparrow \} \times \omega + 1)$.

\begin{definition}
  \label{3.8} We set $I (C) =$:
  \[ \begin{array}{l}
    \hspace{-0.7em}   \left\{ \langle \pa{e, m, X}, \pa{j, k} \rangle \right| P_e^X (m)
       \text{ is a feedback-computation making only halting oracle}\\  \hspace{-0.6em}\text{calls }
        Q^X (e', m') \hspace{0.17em} \hspace{0.17em}
       \text{and receiving back }  j' \in \{ \downarrow, \uparrow \} 
       \text{ where } C (\pa{e', m', X}  ) = \pa{j', k'}
       ;\\
       \quad \quad\mbox{ further if }   j' = \downarrow \mbox{ then } P_{e'}^X
       (m') \downarrow k' \in \omega,  \mbox{ if } j' = \uparrow
       \mbox{then } k' = \omega \left. \, \right\}.
     \end{array} \]
  {\nod}As this is monotone, we may let
  
  $I_0 = \emp$; $I_{< \alpha} = \bigcup_{\beta < \alpha} I_{\beta}$ \&
  $I_{\alpha} = I (I_{< \alpha})$ in the usual way. Fixing $X$ and defining
  $I_{\alpha}^X = I_{\alpha} \rest \omega^2 \times \{ X \}$, $I^X$ reaches a
  least fixed point $I^X_{\infty} = I_{\alpha}^X = I_{\alpha + 1}^X$ for some
  least $\alpha < \omega_1$.
\end{definition}

\begin{definition}
  The {\tmem{ (feedback) rank}} of a defined computation:
  $$\rho \left( \langle \pa{e, m,
  X}, \pa{j, k} \rangle \right),$$ is the least $\beta$, if it exists,  such
  that $\langle \pa{e, m, X}, \pa{j, k} \rangle \in I_{\beta}$ for some
  (unique) $\pa{j, k}$. We often abbreviate this as $\rho (e, m, X)$ with
  $\pa{j, k}$ understood but unspecified.
\end{definition}

Then:

\begin{definition}$\mbox{ }$\linebreak{\sc[The \tmtextbf{$\la e \ra^X$}'th function partial feedback
computable in \tmtextbf{$X$}]}\mbox{ }

 \nod Using $I_{\infty}^X$:
  
  (i) $\la e \ra^X (m)$ is {\tmem{defined}}, or {\tmem{convergent}}, with
  output $k$ iff $I_{\infty}^X (\pa{e, m}) = \pa{\downarrow, k}$ with $k \in
  \omega$.
  
  In which case we set $ \la e \ra^X (m) = k$ or write $\la e \ra^X (m)
  \downarrow k$. If $I_{\infty}^X (\pa{e, m}) = \pa{\uparrow, \omega}$ it is
  {\tmem{divergent}}, written $\la e \ra^X (m) \uparrow$.
  
  (ii) If convergent or divergent, we write additionally $\la e \ra^X (m)
  \Downarrow$.
  
  (iii) Otherwise it is {\tmem{undefined}} or {\tmem{freezing}}, if $\pa{e,
  m} \notin \dom (I^X_{\infty})$, written $\la e \ra^X (m) \Uparrow$.
  
  (iv) $\la e \ra^X $is {\tmem{feedback computable in}} $X$ \ if it is
  partial feedback computable in X and total.
\end{definition}

\begin{definition}  
  The function $f \sset \omega \times 2$ is {\tmem{partial feedback computable in}}
  $X$  if it is $\la e \ra^X$ for some $e \in \omega$. $f$ is {\tmem{feedback computable in $X$}}
if it is    $\la e \ra^X$ for some $e \in \omega$ which is feedback computable in $X$.
 A relation $R$ is
  {\tmem{feedback computable in}} $X$ if the characteristic function $K_R$ is
  feedback computable in $X$.
  
  (ii)  A relation $R$ is {\tmem{feedback semi-computable in}} X if it is the
  domain of a function feedback partial computable in $X$.
\end{definition}

\begin{definition}
  \label{fsemic}{\em\cite{AFL2020}} (Def 3.4) A set $B \sset \omega$ is
  {\tmem{feedback semi-computable in $X$ if there is an $b \in \omega$ so that
  $n \in B \equi \pa{e}^X (b) \Downarrow$.}}
\end{definition}

The reader will note that this definition is saying that the f.~semi-c. sets
are those associated with partial f.c. functions and the non-freezing part of
their domains, not the part of the domains where $\pa{e}^X (b) \downarrow$,
that is, simply converge. We address this below

\begin{definition}
  {\em\cite{AFL2020}} (Def 2.8) For two functions $X, Y : \omega{\imp}2$ we say
  $X \leq Y$ (``$X$ is {\tmem{feedback computable from}} ${Y}$'')  if
  there is $e \in \omega$ so that X$= \pa{e}^Y$. To be explicit this
  means:
  
  $\all n \in \omega :$\quad both $\pa{e}^Y (n) \Downarrow$ and $\pa{e}^Y (n)
  \downarrow$; and\quad$X (n) = \pa{e}^Y (n) .$

\end{definition}
The basic notions and definitions above all come, with some minor changes of notation \etc, from \cite{AFL2020}. We now make some comments on some immediate consequences of those definitions, firstly on the notion of feedback semi-computable sets and their domains of definition.
\begin{lemma}
  There are prim. rec. functions $k_0, k_1$ so that:
  
   (i)
$  {\all}e{\all}m{\pa{e}}^X (m) \Downarrow \equi \pa{k_0 (e)}^X (m)
  \downarrow$.
  
  (ii) ${\all}e{\all}m{\pa{k\tmrsub{1}(e)}}^X (m) \Downarrow \equi \pa{e}^X
  (m) \downarrow$.
\end{lemma}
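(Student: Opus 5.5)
Part (i) is essentially the lemma already proved in the Introduction, and I would reuse that construction. Given $e$, let $k_0(e)$ be the index of the feedback machine which, on input $m$, makes the single halting query $?Q^X(e,m)$ (``does $\pa{e}^X(m)$ converge or diverge?''), receives the answer $j'\in\{\downarrow,\uparrow\}$, and then halts with output $0$. Under the operator $I$ of Definition \ref{3.8}, the triple $\pa{k_0(e),m,X}$ enters some $I_\beta$ with first component $\downarrow$ precisely when $\pa{e,m,X}\in\dom(I_{<\beta})$. It can never be $\uparrow$, since the machine halts once answered. So $\pa{k_0(e)}^X(m)\downarrow$ iff $\pa{e,m,X}\in\dom(I^X_\infty)$, that is iff $\pa{e}^X(m)\Downarrow$. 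By the $s$-$m$-$n$ construction for ordinary Turing programs, $k_0$ is primitive recursive: we simply write down the program with $e$ hard-coded.

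For (ii) the task is to freeze exactly when $\pa{e}^X(m)$ fails to converge. Let $k_1(e)$ be the program which on input $m$ first queries $?Q^X(e,m)$ and then branches on the answer. If it receives $\downarrow$, it halts, say with output $0$. If it receives $\uparrow$, it enters a self-freezing loop: it queries $?Q^X(c,m)$, where $c$ is a fixed index of a program that on any input queries itself.

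To obtain $c$, I would apply the recursion theorem for feedback machines. Alternatively, and more cheaply, I can avoid it: let $k_1(e)$ on receiving $\uparrow$ query $?Q^X(k_1(e),m)$ about its own index, with $k_1$ defined via the ordinary (Kleene) recursion theorem on program codes, which gives a primitive recursive fixed point. A program $d$ that on input $m$ only queries $\pa{d}^X(m)$ freezes. The reason is that $\pa{d,m,X}$ could enter $I_\beta$ only if it already lay in $\dom(I_{<\beta})$, and by induction on $\beta$ it never does.

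The case analysis then runs as follows. If $\pa{e}^X(m)\downarrow$, then $k_1(e)$ receives $\downarrow$ and halts, so $\pa{k_1(e)}^X(m)\downarrow$ and in particular $\Downarrow$. If $\pa{e}^X(m)\uparrow$, then the answer is $\uparrow$ and the machine makes the self-query, which freezes. If $\pa{e}^X(m)\Uparrow$, then the first query already freezes. In both of the last two cases $\pa{k_1(e)}^X(m)\Uparrow$.

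The main obstacle I expect is making the freezing argument rigorous against the operator $I$: one must check that a computation whose query target is outside $\dom(I_{<\beta})$ is not admitted at stage $\beta$. I would verify this by an induction on $\beta$ showing that the self-referential triple never enters any $I_\beta$. The remaining work is confirming that the fixed point is obtained primitive recursively uniformly in $e$, which follows from the standard proof of the recursion theorem using a primitive recursive $s$-$m$-$n$ function.
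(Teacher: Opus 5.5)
Your proposal is correct and is essentially the paper's proof: $k_0(e)$ issues the single query $?Q^X(e,m)?$ and halts on whatever answer it receives, while $k_1(e)$ issues the same query, halts on $\downarrow$, and on $\uparrow$ passes to self-referential code that freezes. Your induction on $\beta$ showing that a self-querying computation never enters any $I_\beta$ spells out what the paper leaves as ``a piece of self-referential code which freezes on all inputs''.
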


{\pf}For $k_0$: given $e$ let $P_{k_0 (e)} (m)$ be the procedure that starts
by issuing the query ?$Q^X (e, m) ?$ For those $m$ so that there is a \
response to the query $P_{k_0 (e)}$ outputs a 1. For the others, namely for
those $m$ with $\pa{e}^X (m) \Uparrow,$ $P_{k_0 (e)} (m)$ will receive no
response to the query, and so will diverge.

For $k_1$: similarly, given $e,$ let $P_{k_1 (e)} (m)$ be the procedure that
also starts by issuing the query ?$Q^X (e, m) ?$ If the response is
$\downarrow$ then $P_{k_1 (e)} (m) \downarrow 1$; but if the response is
$\uparrow$, $P_{k_1}$ launches a piece of self-referential code which freezes
on all inputs. Thus $P_{k_1 (e)} (m) \Uparrow$.{\qed}\\

This shows that although the authors of  {\cite{AFL2020}} used being the non-freezing part of the domain of a partial function to define semi-computable relations, one could equally well have used the convergent part. The following is then immediate from the lemma.

\begin{corollary}
  A set $B \sset \omega$ is {\tmem{feedback semi-computable in $X$ if and only
  if there is an $e \in \omega$ so that $n \in B \Equi \pa{e}^X (n)
  \downarrow$.}}
\end{corollary}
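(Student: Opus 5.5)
The corollary is a biconditional, and I will prove each direction by pushing an index through the preceding lemma. Throughout I use the definition of feedback semi-computability in Definition \ref{fsemic}: $B$ is feedback semi-computable in $X$ iff there is an $e$ with $n\in B \Equi \pa{e}^X(n)\Downarrow$. I read the stray ``$b$'' in that definition as the argument $n$.

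For the forward direction, suppose $e$ witnesses semi-computability, so that $n\in B\Equi \pa{e}^X(n)\Downarrow$. Apply part (i) of the preceding lemma with $m=n$ and set $e'=k_0(e)$. Then $\pa{e'}^X(n)\downarrow \equi \pa{e}^X(n)\Downarrow \equi n\in B$, so $e'$ is the required index. The procedure $P_{k_0(e)}$ from the lemma's proof outputs $1$ exactly when its query $?Q^X(e,n)?$ is answered. So I only need that a frozen query yields no answer, hence no convergence, and that both ``$\downarrow$'' and ``$\uparrow$'' answers yield output $1$.

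For the converse, suppose $n\in B\Equi \pa{e}^X(n)\downarrow$. Apply part (ii) of the lemma and set $e''=k_1(e)$. Then $\pa{e''}^X(n)\Downarrow\equi \pa{e}^X(n)\downarrow\equi n\in B$, so $e''$ witnesses semi-computability in the original sense.

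The only point needing care is in (ii). When the query returns $\uparrow$, the procedure must genuinely freeze, i.e.\ end with $\Uparrow$ rather than merely diverge with $\uparrow$. This requires a self-referential code that freezes on all inputs. I would obtain it from the feedback analogue of the recursion theorem: take an index $c$ with $\pa{c}^X(m)$ issuing the query $?Q^X(c,m)?$. Then $\langle\pa{c,m,X},\cdot\rangle$ can never enter any $I_\alpha$, since its admission would require the same pair to be already in $I_{<\alpha}$. A least-stage argument therefore shows it is never in $I^X_\infty$. This is the step I would check most carefully. With it in hand, both directions are immediate, and the maps $e\mapsto k_0(e)$ and $e\mapsto k_1(e)$ are even primitive recursive, so the equivalence is uniform in the indices.
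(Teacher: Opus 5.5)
Your proof is correct and is exactly the paper's argument: the corollary is read off from the preceding lemma, using $k_0$ for the forward direction and $k_1$ for the converse. Your explicit construction of the freezing code fills in a detail the paper leaves implicit; note that it needs the fixed point $c$ as produced by the usual $s$-$m$-$n$ proof, whose run on $m$ literally issues $?Q^X(c,m)?$, rather than merely the extensional equation of the Recursion Theorem, and with that your least-stage argument correctly shows $\langle c, m, X\rangle$ never enters $I^X_\infty$.
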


From now on we shall abbreviate ``feedback computable'' by ``f.c.'' and \
``feedback semi-computable'' by ``f.semi-c.''\\

We shall also remark that the feedback recursive relations easily include the arithmetic sets.
\begin{lemma}\label{arithmetic} Any $A\sset \, ^n\omega$ which is arithmetic (in $X$) is f.~computable (in $X$).
\end{lemma}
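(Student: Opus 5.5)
We argue by induction on the arithmetical complexity of $A$, written in prenex normal form over a relation recursive in $X$, building uniformly a feedback index for the characteristic function $K_A$. The base case is immediate: if $A$ is (Turing) recursive in $X$, then an ordinary Turing program with oracle $X$ computing $K_A$ is already a feedback program that makes no halting queries. Its computation tree is the single top node, so it is wellfounded, and it converges on every input. Hence $A$ is f.~computable in $X$.

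For the inductive step, suppose $A(\vec n)\equi \ex p\, B(p,\vec n)$ with $B$ f.~computable in $X$ via an index $b$, so that $\pa{b}^X(p,\vec n)\Downarrow$ and $\pa{b}^X(p,\vec n)\downarrow K_B(p,\vec n)$ for all arguments. First form the index $b'$ of the feedback program which, on input $\vec n$, searches $p=0,1,2,\ldots$. At each $p$ it evaluates $\pa{b}^X(p,\vec n)$ as a subroutine; this can be done either by direct simulation or by a query to the oracle followed by running the code. It halts as soon as it finds $p$ with value $1$. Since every subcall is defined, $\pa{b'}^X(\vec n)$ never freezes: it converges if $\vec n\in A$ and diverges otherwise. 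The required program for $K_A$ then issues the single halting query $?Q^X(b',\vec n)?$. The reply is $\downarrow$ or $\uparrow$, and the program outputs $1$ or $0$ accordingly. Its tree consists of the top node above the tree of $\pa{b'}^X(\vec n)$, which is wellfounded because it contains only calls to the total $b$. So the new program is total with wellfounded trees, and it computes $K_A$.

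The universal quantifier case follows by complementation. If $A$ is f.~computable via $a$, the program that runs $\pa{a}^X$ and outputs $1\dot- $ the result computes $K_{\neg A}$. Writing $\all p$ as $\neg\ex p\neg$ then completes the induction over the prefix.

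The only real obstacle is ensuring that no subcomputation freezes, since a single freezing call would make the whole computation $\Uparrow$. This is why we must carry the full hypothesis $\Downarrow$ and $\downarrow$ for \emph{all} arguments through the induction. With that hypothesis, wellfoundedness of the new tree follows because its rank is bounded by the supremum of the ranks of the subcalls plus one. If needed, this can be checked formally via the operator $I$ of Definition~\ref{3.8}: each $\langle\pa{b,(p,\vec n),X},\cdot\rangle$ lies in some $I_\beta$, hence so does the entry for $b'$, and then the entry for the final program lies in the next stage.
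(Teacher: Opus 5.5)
Your proof is correct and follows the paper's route. You take sets recursive in $X$ as the base case and get closure under existential number quantification from a search program over the total index. A single halting query $?Q^X(b',\vec n)?$ then converts its convergence/divergence into a characteristic function, and you handle the universal case by complementation, which the paper leaves implicit. Your remarks on why no subcall freezes and on the rank bound via the operator $I$ only make explicit what the paper takes for granted.
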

\pf Clearly any (Turing) recursive in $X$ set $A$ is f.c in $X$. It thus suffices to show that the f.c in $X$ relations are closed under existential number quantification $\ex^\omega$.
Let $R\sset ^{n+1}\omega$ be f.c. in $X$.  Then its characteristic function $K_R = \pa{e_R}$ for some $e_R\in\omega$. We should like $K_S$, the characteristic function of $S=\{\vec m\mid \ex n R(n,\vec m)\}$, to be f.c. in $X$. 

Let $\pa{e}^X(\vec m)$ be the f.c. in $X$ process that successively computes $K^X_R(0,\vec m)$, $ K^X_R(1,\vec m), \ldots
, K^X_R(k,\vec m), \ldots$ and which halts if it reaches an $n$ so that \linebreak$K^X_R(n,\vec m)\da
0$. Let $\pa{\bar e}^X(\vec m)$ then be the procedure with the query \linebreak $?Q \mbox{ Does }\pa{e}^X(\vec m)\da / \ua ?$ Identifying $\da$ with $0$ and $\ua$ with $1$, we have that $\pa{\bar e}^X(\vec m)$ is such a characteristic function.
\qed\\

{\em What do we do when we simulate?}

The computation $\pa{e_{0}}^{X} ( m_{0} )$ runs as a Turing machine, using
just the pure TM instruction set until there is a halting oracle query $?Q (
e_{1} ,m_{1} ) ?$ : ``Does $\pa{e_{1}}^{X} ( m_{1} )   \da$ or {$\ua$}?''. The
answer appears {\tmem{deo ex machina}} from the oracle. If we {\tmem{simulate
}}$\pa{e_{0}}^{X} ( m_{0} )  $ during some other \ computation $\pa{e'}^{X} (
k' )$ we do something different: as for an (ordinary) universal Turing
machine, we divide up the infinite tape into some infinite slices, and on one
such slice, we ``run'' a copy of $\pa{e_{0}}^{X} ( m_{0} )$ there. When a
query $?Q ( e_{1} ,m_{1} ) ?$ is encountered on this simulated run, this is to
be answered by an oracle - which in a sense is not simulable. However the
query can also be as a query of the running program of $\pa{e'}^{X} ( k' )$
and we regard $\pa{e'}^{X} ( k' )$ as making it, getting a $\da / \ua$
response (in one step) and passing it to the simulated run of \
$\pa{e_{0}}^{X} ( m_{0} )$.\\

{\bu} We note that although the halting oracle is only returning $\uparrow /
\downarrow$ information to any query, easy exercises show that a lot more can
be gleaned from such queries. Suppose, for example $?Q (e_0, m_0) ?$ returns
$\downarrow$. Then there are other indices $e'_0, e''_0$ effectively obtained
from $e_0$, so that \ $\pa{e_0'}^X (m_0, k) \downarrow$ iff $\pa{e_0}^X (m_0)
\downarrow k$, \ and $P^X_{e''_0} (e_0, m_0)$ has a loop with the queries ?Q
Does $\pa{e'_0}^X (m_0, k) \downarrow$? for an incrementing $k = 0, 1, 2,
\ldots$ (with $k$ kept in another register) until the correct output $k_0$ to
$\pa{e_0}^X (m_0)$ is found. Then by assumption $P_{e''_0} (e_o, m_0)$
discovers the value $k_0$. 
Hence a procedure $P_e^X (m)$ can be written, which when
given the right queries, knows the values of $k$ when $\pa{e'}^X (m') \downarrow
k$ for all its query calls.

We shall in the sequel consider such expanded
procedures with more generalised queries which ``return'' values as well as
$\uparrow / \downarrow$ in this way, without much further comment. As two examples, the next two lemmata are exercises, using variations on the ideas from the previous
paragraph. However, first for the record, proven in the usual manner:

\begin{theorem}[Recursion Theorem for Feedback Computation]\label{FCRT}
For\linebreak any functional $F(e,\vec m, \vec X)$ partial  f.~c., there exists $\bar{e}\in \omega$ such that $$\all \vec m, \vec X\, F(\bar e,\vec m, \vec X)=\pa{\bar e}^{\vec X}(\vec m).$$ \end{theorem}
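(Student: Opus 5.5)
The plan is to follow the classical route: prove an $S^n_m$ theorem for feedback computation, and then run Kleene's diagonal argument. The paper says this theorem is ``proven in the usual manner'', and the work is in checking that the usual manner survives the feedback setting. The point to exploit is that a feedback program is still an ordinary Turing program, extended by one extra instruction type, the query $?Q(e',m')?$. The indexing $e\mapsto P_e$ is therefore an effective enumeration of finite programs, exactly as in the classical case. Oracle access to $\vec X$ is fixed throughout, and the query mechanism is also uniform, so any program transformation that is syntactic only will commute with the semantics given by $I$ and $I^X_\infty$.

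\emph{Step 1 ($S^1_1$).} I would define a primitive recursive $s(e,a)$ giving the index of the program that first writes $a$ onto the tape next to the input $\vec m$, and then runs the instructions of $P_e$ verbatim, including its query instructions. The claim to verify is that
\[
\pa{s(e,a)}^{\vec X}(\vec m)\simeq\pa{e}^{\vec X}(a,\vec m)
\]
in the strong sense: convergent with the same output, or divergent, or freezing, in exactly the same cases. I would prove this by induction along the stages $I_\alpha$. The query calls issued by the two runs are literally the same pairs $\pa{e',m'}$, so they receive the same answers from $I_{<\alpha}$. Hence the two top-level runs are step-for-step identical apart from a finite preamble. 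In particular both belong to $I_\alpha$ at the same stage, and $\Uparrow$ is preserved because neither computation can enter the inductive definition while the other stays out.

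\emph{Step 2 (diagonalisation).} Given $F$ partial f.c., say $F(e,\vec m,\vec X)\simeq\pa{f}^{\vec X}(e,\vec m)$, I would form the partial f.c. function
\[
(d,\vec m)\mapsto F(s(d,d),\vec m,\vec X).
\]
It is obtained from $\pa{f}$ by the same kind of preamble: compute the number $s(d,d)$ ordinary-recursively, then continue as $P_f$. So it has an index $c$, again with the strong equivalence. Setting $\bar e=s(c,c)$ gives
\[
\pa{\bar e}^{\vec X}(\vec m)\simeq\pa{c}^{\vec X}(c,\vec m)\simeq F(s(c,c),\vec m,\vec X)=F(\bar e,\vec m,\vec X).
\]

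The main obstacle is justifying that the composed program in Step 2 really has the same convergence, divergence and freezing behaviour as $F$. The concern is that the preamble might interact with the query semantics. It cannot, because the preamble computes only the ordinary primitive recursive function $s$, uses no queries, and always halts. Query calls made afterwards are the same calls $P_f$ would make, so an induction on rank transfers the behaviour as in Step 1. I would also note that the equality in the statement should be read as $\simeq$, covering the $\Uparrow$ case as well.
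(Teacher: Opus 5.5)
Your proposal is correct and is exactly the ``usual manner'' the paper appeals to without writing it out: an $S^1_1$ theorem for feedback programs, followed by Kleene's diagonal choice $\bar e = s(c,c)$. The only feedback-specific point is your check that a query-free, always-halting preamble preserves convergence, divergence and freezing, and you handle it correctly (it even needs no induction, since for every $C$ the two tuples meet the membership condition for $I(C)$ in exactly the same way).
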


\begin{lemma}
  \label{L5}There is a p.r. function $f$ so that if $\pa{e}^X(m)\Downarrow$ then  $$\all k{\pa{f (e)}}^X (m,
  k) \downarrow \pa{e_k, m_k}$$ where $? Q (e_k, m_k) ?$ is the $k$'th query
  call during a run of $\pa{e}^X (m)$. (If $\pa{e}^X (m)$ makes only finitely
  many query calls we set $\pa{e_k, m_k} = \pa{e_{k_0}, m_{k_0}}$ where \linebreak$? Q
  (e_{k_0}, m_{k_0}) ?$ is the last query, for $k > k_0$.)
\end{lemma}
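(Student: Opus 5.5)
Given $e$, the procedure $P_{f(e)}^X(m,k)$ simulates the run of $\pa{e}^X(m)$ on a slice of its tape, in the sense of the simulation discussion above, keeping a counter $c$ of the number of query calls met so far, initially $0$. The choice of $f$ depends only on $e$ through a fixed piece of code with $e$ inserted as a parameter, so $f$ is primitive recursive by the usual $S^n_m$ construction.

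The simulation runs as follows. Ordinary Turing steps of $\pa{e}^X(m)$ are copied, consulting the oracle $X$ as needed. When the simulated run reaches a query $?Q(e',m')?$, $P_{f(e)}$ first compares $c$ with $k$. If $c=k$, it halts with output $\pa{e',m'}$. Otherwise it sets $c:=c+1$, issues the same query $?Q(e',m')?$ itself, and passes the answer $\da/\ua$ back to the simulated run, which then continues. If instead the simulated run halts before a $k$'th query is reached, $P_{f(e)}$ outputs the pair $\pa{e_{k_0},m_{k_0}}$ of the last query seen. To make this possible it stores the most recent query pair in a separate register, overwriting it at each new query.

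The key point is convergence, and this is where the hypothesis $\pa{e}^X(m)\Downarrow$ is used. Because $\pa{e}^X(m)$ is defined, every query it actually issues, say $?Q(e_j,m_j)?$ for $j<k$, has $\pa{e_j}^X(m_j)\Downarrow$. Hence when $P_{f(e)}$ re-issues these queries it receives answers and does not freeze. By the same reasoning its own tree of subcomputations is a subtree of that of $\pa{e}^X(m)$ and so is wellfounded.

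\textbf{Main obstacle.} The hard case is when the master run of $\pa{e}^X(m)$ diverges, $\ua$, yet is $\Downarrow$. If it makes at least $k+1$ queries, the $k$'th is reached after finitely many steps and $P_{f(e)}$ halts. If it makes only finitely many queries and never halts, the simulation, as written, would wait forever for a further query. I would handle this by testing at query time: when the simulation reaches query number $c$ with $c<k$, $P_{f(e)}$ first asks a single halting query, ``Does the continuation of the simulation from here reach another query or halt?''. This is an ordinary Turing question relative to $X$ together with the now-known answers, so it is f.c. in $X$ by Lemma \ref{arithmetic}, or simply by coding it as a query to a feedback program $\pa{e''}^X$ produced uniformly. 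If the answer is negative, $P_{f(e)}$ outputs the current pair as the last query. Arranging this last step so that it stays uniform in $e$ is the step I expect to need the most care. I would realise it via the Recursion Theorem \ref{FCRT}, so that the code of the continuation-test program is available inside $P_{f(e)}$. In the degenerate case of no queries at all, some default pair is output.
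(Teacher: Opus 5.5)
Your proposal is correct and is essentially the argument the paper has in mind. The paper gives no proof, leaving this lemma as an exercise: simulate $\pa{e}^X(m)$ as in its discussion of simulation, and use extra halting queries to effectively obtained auxiliary programs. That is exactly your counter plus continuation test, with $\pa{e}^X(m)\Downarrow$ guaranteeing that none of the re-issued queries freezes. You also do not need the Recursion Theorem here: the continuation test is one fixed query-free program applied to the coded current configuration, so its query has rank $0$ and its index can simply be hard-wired into $P_{f(e)}$.
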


\begin{lemma}
  \label{L2.12}There is a p.r. function $g \in \,^{\omega} \omega$ such that
  $$\all e, k, n \in \omega{\pa{g (e)}}^X (m, k, n) \downarrow \equi \pa{e}^X
  (k) = n.$$
\end{lemma}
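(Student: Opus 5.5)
We read the statement as follows: for all $e,m,k,n$, the computation $\pa{g(e)}^X(m,k,n)\downarrow$ holds if and only if $\pa{e}^X(k)\downarrow n$. The stray $m$ is a dummy parameter that the procedure ignores. The plan is to build $P_{g(e)}$ uniformly from $e$ using only halting-oracle queries. We first pass to an auxiliary index $e'$, obtained primitive recursively from $e$, in the manner of the ``expanded procedures'' paragraph above. The procedure $P_{e'}^X(k,n)$ does three things in turn:
\begin{itemize}
\item it simulates $\pa{e}^X(k)$, forwarding each query of the simulated run as its own query, exactly as described under ``What do we do when we simulate?'';
\item if the simulation halts with output $n'$, it compares $n'$ with $n$;
\item it halts if $n'=n$, and otherwise enters a trivial infinite loop.
\end{itemize}
Thus $\pa{e'}^X(k,n)\downarrow$ iff $\pa{e}^X(k)\downarrow n$. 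Moreover $\pa{e'}^X(k,n)\Uparrow$ iff $\pa{e}^X(k)\Uparrow$, since the tree of $e'$ is just the tree of $e$ with a new root.

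We then let $P_{g(e)}^X(m,k,n)$ be $P_{e'}^X(k,n)$, having discarded $m$. We check both directions. If $\pa{e}^X(k)=n$, the simulation halts with output $n$, so $\pa{g(e)}^X(m,k,n)\downarrow$. Conversely, suppose $\pa{g(e)}^X(m,k,n)\downarrow$. Then every query received an answer, the simulated run halted, and its output equalled $n$, since otherwise we would have looped. Hence $\pa{e}^X(k)\downarrow n$. Primitive recursiveness of $g$ follows because the construction is a fixed syntactic wrapping of the code $e$, as for an ordinary universal machine. We could equally obtain it via an $S^n_m$ argument, as in the remark before the Recursion Theorem \ref{FCRT}.

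The one point needing care is the forwarding of queries during simulation. We must make sure that the simulating computation's query $?Q(e_1,m_1)?$ receives the same $\da/\ua$ answer that the simulated run would, and that freezing of the simulated run makes the outer computation freeze rather than diverge. Under the monotone definition of $I$, a computation is in $I_\alpha$ precisely when all its oracle calls appear earlier with the stated answers. Since the outer computation issues exactly the same calls, it enters at the same stage or the next one, with the matching behaviour. No use of Stage Comparison or of the Recursion Theorem is required.

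If one instead reads the statement as requiring $\downarrow$ to be an actual halt with a value, we adjust the last step: output $0$ upon $n'=n$. The argument is otherwise unchanged.
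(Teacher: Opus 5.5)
Your proposal is correct and is essentially the argument the paper intends. The paper leaves this lemma as an exercise, pointing back to the preceding paragraph, where an index $e_0'$ with $\pa{e_0'}^X(m_0,k)\downarrow$ iff $\pa{e_0}^X(m_0)\downarrow k$ is said to be effectively obtainable from $e_0$; your construction (simulate while forwarding queries, compare the output with $n$, then halt or loop, discarding the dummy $m$) is exactly how that index is built.

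One cosmetic point: because queries are forwarded rather than a single query about $\pa{e}^X(k)$ being issued, the tree of $e'$ is the tree of $e$ with its root relabelled, not with a new root added. This matches your (correct) observation that both computations enter $I$ at the same stage.
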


We adopt the following abbreviating notation: we let greek letters such as
$ \gamma, \delta$ in the sequel stand in for f.~computations $\gamma = \pa{e}^X
(m)$ which we shall also abbreviate as simply $\gamma = (e, m, X)$ or just
$(e, m)$ if the oracle $X$ is understood. If $\gamma =_{\tmop{df}} (e, m, X)$
abbreviates the feedback computation $\la e \ra^X (m)$, which makes a query
call $Q^X (e_0, m_0) ?$ then we say that  $\la e_0 \ra^X (m_0) $ is a
{\tmem{subcomputation}} of \ $\la e \ra^X (m)$, and we abbreviate such a
subcomputation as $\gamma_p =_{\tmop{df}} (e_0, m_0, X)$.

\

We let $\gamma_p  (p < k (\gamma))$ (where $k (\gamma) \leq \omega$)
enumerate the query calls \linebreak$? Q^X (e_p, m_p) ?$ occurring in turn in the run of
a computation $\gamma$ as $\gamma_p = (e_p, m_p)$. Of course if $k (\gamma) =
\omega$ then $\gamma\uparrow\!.$ If we are considering computations relative
to different oracles $X $ and $Y$, such as $\pa{e}^X (m)$ and $\pa{f}^Y (n)$
then we'll write these also as $\gamma = (e, m, X)$ and $\delta = (f, n, Y)$.

\begin{definition}
  We set $\rho^X (e, m) =\rho  (e, m, X)= | (e, m, X) |$ to be the least $\alpha$ such that
  $\la${\pa{e,m,X}}, $\pa{j, k} \ra \in I^X_{\alpha}$ for some $\pa{j, k} .$
\end{definition}

Thus we rank convergent computations by that $\alpha$ where they appear in the
inductive definition of convergent computations. If $(e, m, X) \notin
\tmop{dom} (I_{\infty})$ it is convenient to set $\rho  (e, m, X) = \omega_1$.

\

We then have again the key result here:

\begin{theorem}[Stage Comparison]
  \label{OrdComp} There is a \ f.~s.-c. \ function $G$, such
  that for any $X, Y \in\, ^{\omega} 2$, for all $\gamma = \pa{e^0, m^0, X}$
  and $\delta = \pa{e^1, m^1, Y} :$
  
  (i) $\rho (\gamma) < \omega_1 \wedge \rho (\gamma) \leq \rho  (\delta) \imp
  G (\gamma, \delta) \simeq 0 ;$
  
  (ii) $\rho (\delta) < \omega_1 \wedge \rho (\delta) < \rho (\gamma) \imp G
  (\gamma, \delta) \simeq 1.$
\end{theorem}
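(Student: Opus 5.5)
I propose to mirror the proof of the Kleene version of Theorem \ref{OrdComp}, replacing the functionals $\me,\me^\mo$ by feedback queries and replacing Kleene's Recursion Theorem \ref{KRT} by the feedback Recursion Theorem \ref{FCRT}.

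\textbf{Rank recursion.} A convergent or divergent computation $\gamma$ enters $I$ at stage $\beta$ exactly when every one of its query calls $\gamma_p$ ($p<k(\gamma)$) already has a value in $I_{<\beta}$. Hence
\[ \rho(\gamma)=\sup{}^+\{\rho(\gamma_p)\mid p<k(\gamma)\}. \]
In particular, $\rho(\gamma)=0$ holds iff $\gamma$ makes no query calls. Unlike the Kleene case, there is no second index $i$ for the subcomputations, so the enumeration of Lemma \ref{L5} suffices. The padding convention there, which repeats the last query, is harmless for suprema.

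\textbf{Defining $F(e,\gamma,\delta)$ as a feedback procedure.}
\begin{itemize}
\item First, decide arithmetically in $X,Y$ whether $\gamma$ makes any query call. This is f.c.\ by Lemma \ref{arithmetic}. If it makes none, output $0$.
\item Otherwise, decide whether $\delta$ makes any query call. If it makes none, output $1$.
\item Otherwise, compute the clause ``$\ex p\all q\,\pa{e}(\gamma_p,\delta_q)\da 1$''. Form a procedure $\pa{a}(p)$ that queries, for $q=0,1,\ldots$ in turn, the value of $\pa{e}(\gamma_p,\delta_q)$ via the expanded queries described after Lemma \ref{arithmetic}. It halts as soon as some value is $0$. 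Next form $\pa{b}$ that, for $p=0,1,\ldots$, asks ``Does $\pa{a}(p)\da$ or $\ua$?''. It halts with output $1$ at the first $p$ answered $\ua$. If every answer is $\da$, it never halts.
\item Package $\pa{b}$ into a single query ``Does $\pa{b}\da/\ua$?'' (the trick of Lemma \ref{arithmetic}). Output $1$ if the answer is $\da$ and $0$ if it is $\ua$.
\end{itemize}
The subcomputation calls $\gamma_p,\delta_q$ are obtained uniformly by Lemma \ref{L5}. Lemma \ref{L2.12} lets us read values rather than mere $\da/\ua$.

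By Theorem \ref{FCRT}, fix $\bar e$ with $\pa{\bar e}(u,v)\simeq F(\bar e,u,v)$, and let $G=\pa{\bar e}$.

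\textbf{The main obstacle.} The nested query structure freezes as soon as some $G(\gamma_p,\delta_q)$ freezes. In the Kleene proof, clauses (A)/(B) only need totality on the relevant pairs. Here we must ensure that every query $G(\gamma_p,\delta_q)$ issued is non-freezing and convergent. Moreover, Lemma \ref{L5} needs $\gamma\Downarrow$, which may fail when $\rho(\gamma)=\omega_1$.

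I would handle this in the induction on $\sigma=\min\{\rho(\gamma),\rho(\delta)\}$ as follows.
\begin{itemize}
\item \textbf{Case $\rho(\gamma)\le\rho(\delta)$, with $\rho(\gamma)<\omega_1$.} Every pair satisfies $\min\{\rho(\gamma_p),\rho(\delta_q)\}<\sigma$. So all inner $G$ values are defined and given by (1),(2). For each $p$ some $q$ has $\rho(\gamma_p)\le\rho(\delta_q)$, which yields $0$. Hence each $\pa{a}(p)\da$, so $\pa{b}\ua$, and the output is $0$.
\item \textbf{Case $\rho(\delta)<\rho(\gamma)$.} Now $\gamma$ may be frozen, so the enumeration of $\gamma_p$ itself may freeze. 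I would first try to restructure the search by interleaving along $\gamma$'s own run. Simulate $\gamma$ step by step. At each query $\gamma_p$ it issues, before ``answering'' it, run the test ``$\all q\,G(\gamma_p,\delta_q)=1$''; this test is safe because $\rho(\delta_q)<\sigma$. Halt with $1$ as soon as a $p$ passes the test.
\end{itemize}

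Since $\rho(\delta)<\rho(\gamma)$, some first $p_0$ has $\rho(\gamma_{p_0})\ge\rho(\delta)$. Every earlier $\gamma_p$ has rank $<\rho(\delta)\le\omega_1$, hence is defined, so the simulation reaches $p_0$ without freezing. This ordered, least-witness search is the crux: it makes $G$ converge to $1$ while avoiding the frozen tail of $\gamma$. It also requires checking that the case $0$ branch is still recovered, by running the simulation to completion when $\gamma$ is defined. Finally, $G$ is f.~s.-c.\ simply by being $\pa{\bar e}$.
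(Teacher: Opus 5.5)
Your argument is correct, but it is organised differently from the paper's. You port the Kleene proof almost verbatim. $F$ decides $\ex p\,\all q\,\langle e\rangle(\gamma_p,\delta_q)\da 1$ against its negation $\all p\,\ex q\,\langle e\rangle(\gamma_p,\delta_q)\da 0$ using two nested levels of convergence queries: an inner walk $\langle a\rangle$ through $\delta$'s queries for a fixed $\gamma_p$, and an outer walk along $\gamma$'s run that consults $\langle a\rangle$ at each $\gamma_p$ before evaluating it. The only genuinely feedback-specific ingredient is then laziness: never evaluate a query before the comparison test on it has been made.

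The paper instead runs a single zig-zag process $P$. It moves forward through both query sequences at once, lazily on both sides, carrying the current ``champion'' on the output tape. This produces the three clauses (C), (A), (B). Its verification of the case $\rho(\gamma)\le\rho(\delta)$ must split into ``some $\delta_l$ bounds all the $\gamma_k$'' versus ``equal suprema'', and it needs a separate claim that the zig-zag terminates exactly at the least $k_0$.

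Your route buys a shorter verification. The case $\rho(\gamma)\le\rho(\delta)$ is just $\all p\,\ex q\,\rho(\gamma_p)\le\rho(\delta_q)$, and the case $\rho(\delta)<\rho(\gamma)$ is your least-witness argument. The price is an extra level of query nesting and re-simulating $\delta$ afresh for every $\gamma_p$, whereas the paper makes one symmetric forward pass.

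There are two points you should settle rather than leave tentative.

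First, take the lazy walk along $\gamma$'s run as \emph{the} outer search in all cases, and have it loop rather than halt if $\gamma$'s own run halts. The ``case $0$'' check is then exactly your first-case argument. When $\rho(\gamma)<\omega_1$ and $\rho(\gamma)\le\rho(\delta)$, every $\gamma_p$ is defined and every $\langle a\rangle(p)$ converges, so the walk diverges without freezing.

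Second, when $\rho(\gamma)\le\rho(\delta)=\omega_1$, Lemma \ref{L5} is not available for $\delta$. You need $\langle a\rangle(p)$ itself to walk along $\delta$'s run in order, testing each $\delta_q$ before evaluating it. It then halts no later than $\delta$'s first frozen query $\delta_{q^*}$, where $G(\gamma_p,\delta_{q^*})\simeq 0$ by (i) of the induction hypothesis, since $\rho(\gamma_p)<\sigma$.
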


{\pf} We shall assume for the sake of brevity that $X = Y$ (and as that is all
we shall use later). The proof when $X \neq Y$ only introduces notational
complexity. Since $X$ will be constant throughout, we drop it from some of our
tuples, leaving its presence understood; thus writing for example $\gamma =
\pa{e^0, m^0}$ rather than $\pa{e^0, m^0, X}$ {\etc} Let $F$ be defined in the following manner.

\nod$
\begin{array}{rcl}

 &0&\mbox{ if }\rho ( \gamma ) =0\\

&1&\mbox{ if }\rho ( \gamma ) \neq 0
\wedge   \rho ( \delta ) =0; \mbox{ otherwise: }\\

F(e,\gamma, \delta) = &1&\mbox{ if } \ex \gamma_{k} \all \delta_{l}
\pa{e}^X(\gamma_k, \delta_l) \downarrow 1;\, (C)\\

 & 0 &\mbox{ if } 
 { \ex \delta_{l} \all  
\gamma_{k}   \pa{e}^X (\gamma_k, \delta_l) \downarrow 
 0}; \,(A)\\

 &0& \mbox{ if }\all \gamma_{n} \ex \delta_{l}
\ex   \gamma_{k} (  \pa{e}^X (\gamma_n, \delta_l) \downarrow   0 \wedge \pa{e}^X (\gamma_k, \delta_l) \downarrow   1 ) ;\, (B)\\
& \ua & \mbox{ otherwise. }

\end{array}
$\\

\nod{\tmem{Claim 1} $F$ is f.~semi-c. in }$X$.

{\pf}of {\em Claim 1}. We give a procedure $P$, feedback computable  uniformly in
$X$, for computing $F$.

The first two lines in the definition of $F$ by cases, are predicates
arithmetic in $X$ and so by Lemma \ref{arithmetic}, they are f.c. in $X$. Suppose then both $\rho (
\gamma ) , \rho ( \delta ) >0$.
We outline the procedure for defining $F$ in these latter cases. We shall ask
if the following overall process $P $ converges or diverges. We are to think of
$P = P^X$ as a f.~semi-c. process, giving rise to a f.c. partial function $\la e'
\ra^{X} ( \gamma , \delta ) $. We specify $P$ as a process that itself
makes query calls, in terms of subprocesses researching queries made by $\gamma$
and $\delta$. \

(1) Write $\delta_{0}$ to the OT.
Search incrementally through $\gamma_{0} , \ldots ,
\gamma_{p_{0}}$ for a $p_{0}$ (if it exists) with the property that
$\pa{e}^{X} ( \gamma_{p_{0}} , \delta_{0} ) \da 1$.
For this once we give some more detail: $P$ simulates $\delta$ until it reaches the first query $\delta_0$ which it may then write to the output tape (without evaluating the query yet). It then simulates $\gamma$ until it reaches the first query there, $\gamma_0$, and may, without evaluating $\gamma_0$,  check if $\pa{e}^{X} ( \gamma_{0} , \delta_{0} ) \ua $ or $\da k \notin 2$. If so then $F ( e, \gamma , \delta )$ is undefined. Otherwise (if $0\neq p_0$),
$\pa{e}^{X} ( \gamma_{0} , \delta_{0} ) \da 0$. Returning to the simulation of $\gamma$, $\gamma_0$ is evaluated and the simulation proceeds, pausing when it reaches the next query $\gamma_1$; again before evaluating this, it checks if $\pa{e}^{X} ( \gamma_{1} , \delta_{0} ) \ua $ or $\da k \notin 2$. And so forth, until it reaches a suitable $p_0$ if the latter exists.

 Hence whilst conducting this search, if we have  for some least $p'$, that \linebreak
$\pa{e}^{X} ( \gamma_{p'} , \delta_{0} ) \ua $ or $\da k \notin 2$, then $F ( e, \gamma , \delta )$ is undefined.

(2) \ If this search does not succeed (\ie does not find any such
$\gamma_{p_0}$, then we are at $(A)$, with $\delta_0$ instantiating $\delta_l$) and ensure $F ( e, \gamma , \delta ) \simeq 0$, and $P$
then halts. If this search  succeeds, (we have then 
$\pa{e}^{X} ( \gamma_{p'} , \delta_{0} ) \da 0$ for $p'<p_0$ but 
$\pa{e}^{X} ( \gamma_{p_{0}} , \delta_{0} ) \da 1$)
then we 
write $\gamma_{p_{0}}$ to the OT
(overwriting $\delta_{0}$). Then $\delta_0$ is evaluated and now:

(3) Similarly search $\delta_{1} , \ldots , \delta_{q_{0}}$ for a $\delta_{q_{0}}$ so
that $\pa{e}^{X} ( \gamma_{p_{0}} , \delta_{q_{0}} ) \da 0$ (if it exists). \ If we come across  some least $q'$, with
$\pa{e}^{X} ( \gamma_{p_0} , \delta_{q'} ) \ua $ or $\da k \notin 2$, then $F ( e, \gamma , \delta )$ is again rendered undefined.

If this search does not halt (so does not find such a $\delta_{q_{0}}$ and we are at  $(C)$ with $\gamma_{p_0}$ instantiating $\gamma_k$ there), then
we ensure $F ( e, \gamma , \delta ) \simeq 1$, halting $P$. If this search
halts, (then
we have $\pa{e}^{X} ( \gamma_{p_0} , \delta_{q'} ) \da 1$ for $q'<q_0$ but 
$\pa{e}^{X} ( \gamma_{p_{0}} , \delta_{q_0} ) \da 0$)
 then
 $\delta_{q_{0}}$ replaces $\gamma_{p_{0}}$ on the OT.

(4) Returning to (1), then $\gamma_{p_{0} +1} , \ldots ,$ is searched for a
$\gamma_{p_{1}}$ so that \linebreak $\pa{e}^{X} ( \gamma_{p_{1}} , \delta_{q_{0}} ) \da
1$.\\

And so forth, zig-zagging back and forth through the query calls of both
computations. We then ask $?Q  \tmop{Does}  P^{X} ( \gamma , \delta ) \da /
\ua$?

If $\ua$ then we are at $(B)$ above, and set $F ( e, \gamma , \delta ) \simeq 0$; if $\da$ then $F (
e, \gamma , \delta )$ has a value according to (2) or (3).
{\qed} {\em Claim 1}\\

By the Feedback Recursion Theorem there is $\bar{e}$ with $\pa{\bar{e}}^{X^{}}
( u,v ) =F ( \bar{e} ,u,v )$, and we set
$$G ( u,v ) = \pa{\bar{e}}^{X} ( u,v ).$$

We now claim that $G$ satisfies (i) and (ii) of the Theorem. We have that:\\

\nod{\tmem{Claim 2}}

$
\begin{array}{lcl}

 &0&\mbox{ if }\rho ( \gamma ) =0\\

&1&\mbox{ if }\rho ( \gamma ) \neq 0
\wedge   \rho ( \delta ) =0; \mbox{ otherwise: }\\

G ( \gamma , \delta ) =&1&\mbox{ if } \ex \gamma_{k} \all \delta_{l} G (
\gamma_{k} , \delta_{l} ) \simeq 1; (C)\\

 & 0 &\mbox{ if } 
{ \ex \delta_{l} \all  
\gamma_{k}  G ( \gamma_{k} , \delta_{l} ) \simeq 0}; (A)\\

 &0&{ \mbox{ if }\all \gamma_{n} \ex \delta_{l}
\ex   \gamma_{k} (  G ( \gamma_{n} , \delta_{l} ) \simeq 0 \wedge G (
\gamma_{k} , \delta_{l} ) \simeq 1 ) .} (B)\\

\end{array}
$\\

We now prove that $G$ satisfies {\tmem{(i)}} and {\tmem{(ii)}} by induction on
$\sigma \dfs \min \{ \rho ( \gamma ) , \rho ( \delta ) \} $ $< \omega_{1}$. (If
both $\rho ( \gamma ) = \rho ( \delta ) = \omega_{1}$ there is nothing to do.)
Thus at least one of these ranks is countable.

$\sigma =0:$ then either $\rho ( \gamma ) =0$ and $G ( \gamma , \delta ) =0$
or $0= \rho ( \delta ) < \rho ( \gamma )$ and so $G  ( \gamma , \delta ) =1$.

$0< \sigma$: as inductive hypothesis we assume{\tmem{ (i)}} and {\tmem{(ii)}}
hold for all pairs $\bar{\gamma} , \bar{\delta}$ with $\min \{ \bar{\gamma} ,
\bar{\delta} \} < \sigma$.

Suppose first that $\rho ( \gamma ) < \omega_{1} \wedge \sigma = \rho ( \gamma ) \leq
\rho ( \delta )$. Then $$\all k<k^{0} \: \all l<k^{1}\min \{ \rho^{} (
\gamma_{k} ) , \rho ( \delta_{l} ) \} < \sigma,$$ and the inductive hypothesis for {\tmem{(i)}}
and {\tmem{(ii)}} applies and we therefore have:\\

(5) $\rho ( \gamma_{k} ) \leq \rho ( \delta_{l} ) \imp G ( \gamma_{k} ,
\delta_{l} ) \simeq 0;$

(6) $\rho^{} ( \delta_{l} ) < \rho ( \gamma_{k} ) \imp G ( \gamma_{k} ,
\delta_{l} ) \simeq 1.$\\

As these antecedents are mutually exclusive, we have that $\all k<k^{0} \:
\all l<k^{1}  $ $G ( \gamma_{k} , \delta_{l} ) \da .$ As $\rho^{} ( \gamma )
\leq \rho ( \delta )$ by assumption, we have $\all k<k^{0} \ex l<k^{1} G (
\gamma_{k} , \delta_{l} ) \simeq 0$. Either there is some $\delta_{l}$ with
$\rho ( \delta_{l} )$ bounding all the $\rho ( \gamma_{k} )$ - in which case
by clause (A) above $G ( \gamma , \delta ) =0$ - or else we have $\sup \{ \rho
( \gamma_{k} ) \mid k<k^{0}=\omega \} = \sup \{ \rho ( \delta_{k} ) \mid k<k^{1}=\omega \} .
$Then by clause $(B)$, \ we also have $G ( \gamma , \delta ) =0.$

Suppose secondly, that $\rho ( \delta ) < \rho ( \gamma ) \leq \omega_{1}$.
Then $\all l<k^{1} \rho ( \delta_{l} ) < \rho ( \delta ) = \sigma$, and by the
inductive hypothesis again on {\tmem{(i)}} and {\tmem{(ii)}} we have that for any $k<k^{0}
,l<k^{1}$, (5) and (6) again hold.

As $\rho ( \delta ) < \rho ( \gamma )$ there is  a $k_{0} <k^{0}$ least with $\rho (
\delta ) \leq \rho^{} ( \gamma_{k_{0}} )$. This implies $\all l<k^{1} \rho (
\delta_{l} ) < \rho ( \gamma_{k_{0}} ) .$ By using (6) again, and the defining
properties of $F,G$ at $(C)$, we have $G ( \gamma , \delta ) =1.$

(More explicitly, but perhaps a  trifle unnecessarily, we can see how the mechanics of this search for $k_0$ plays out in the terms of the procedure $P$:

{\tmem{Claim: \ if $\pa{p_{i} \mid i<n}$ are such that $\gamma_{p_{0}} <
\gamma_{p_{1}} < \cdots$ are chosen in the cycle defining $P$ above, then (a)
$n< \omega$ and (b)}} $p_{n-1} =k_{0}$.

{\pf} Note that if some $p_{j} <k_{0}$, whilst at (3) above, then there is
$\delta_{q_{j}}$ such that $G ( \gamma_{p_{j}} , \delta_{q_{j}} )
\da 0$, and we return to (1) to search for a $p_{j+1} >p_{j}$ with $G (
\gamma_{p_{j+1}} , \delta_{q_j} ) \da 1$. If $p_{j+1} \neq k_{0}$, then there is
$\delta_{q_{j+1}}$ with \ $G ( \gamma_{p_{j+1}} , \delta_{q_{j+1}} ) \da 0$.
However the $p_{j_{i}} <k_{0}$ are increasing, and eventually we reach
$p_{n-1} =k_{0}$.) {\qed}
\subsection{\normalsize Gandy Selection for Feedback Computation} \label{GSFC}

\begin{definition}
  We define the {\tmem{universal f.~semi-computable set}} by: (i) $$U (e,
  \vec m, X)  \equi_{\tmop{df}}{\pa{e}}^X (\vec m) \downarrow$$
  
  (ii) $U^X (e, \vec m)  \equi_{\tmop{df}}$ $U (e, \vec m, X)$ is then
  the {\tmem{universal f.~semi-computable in $X$ set.}} 
\end{definition}

Then our function $\rho$ gives a ranking on $U$, as does the projection
$\rho^X$ on the universal semi-computable in $X$ set $U^X$. In standard
terminology $U^X$ is $\omega${\tmem{-parametrized}} {\via}the first coordinate
$e$. Notice that it is equivalent by our conventions to say $\rho(e,\vec m,X)=\rho^X(e,\vec m)<\omega_1$ or $(e,\vec m)\in U^X$, or $(e,\vec m,X)\in U$.

Let $\Gamma^X$ be the pointclass of f.~semi-computable in $X$ relations in
$^{< \omega} \omega$. We have a norm $\rho^X $on the universal f.
semi-computable in X set $U^X$ and hence on any $A \in \Gamma^X$. We see below
that $\rho^X $is a $\Gamma^X${\tmem{-norm}} in the sense of Moschovakis,
{\cite{Mosch2009}} p.153, or, \ in that there is a f.~semi-computable in $X$
relation $\leq^{\rho}_{\Gamma}$ and a f.~co-semi-computable in $X$ relation
$\leq^{\rho}_{\neg \Gamma}$ with the property that:\\

$
\begin{array}{ll}(\ast)\,\quad\rho^X (y) < \omega_1 \imp & \\ 
\hspace{3em}
   \all X \left\{ [\rho^X (X) <
\omega_1 \wedge \rho^X (X) \leq \rho^X (y)] \equi X \leq^{\rho}_{\Gamma} y
\equi X \leq^{\rho}_{\neg \Gamma} y \right\}. &
\end{array}$

\begin{definition}
  A pointclass $\Gamma$ is said to be {\tmem{normed}} or to have the
  {\tmem{Prewellordering Property }}if every pointset $A \in \Gamma$ admits a
  $\Gamma$-norm.
\end{definition}

\begin{lemma}
  For any function $X \in \:^{\omega} 2$, the class $\Gamma^X$ of relations
  f.~semi-computable in $X$ has the prewellordering property.
\end{lemma}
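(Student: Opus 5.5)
Every $A \in \Gamma^X$ will be normed by pulling back the rank $\rho^X$ along an index, with the two comparison relations required by $(\ast)$ built from the Stage Comparison Theorem \ref{OrdComp}.

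\textbf{Setting up the norm.} Fix $A \in \Gamma^X$. By the preceding Corollary we may choose an index $a$ with
\[ \vec m \in A \Equi \pa{a}^X(\vec m)\downarrow, \]
so that $\vec m \in A \Equi (a,\vec m) \in U^X$. Define $\varphi(\vec m) = \rho^X(a,\vec m)$. This is an ordinal $<\omega_1$ exactly on $A$, and equals $\omega_1$ off $A$ by convention. (If one prefers $\varphi$ to see only convergent computations, first replace $a$ by the index $k_1(a)$ from the earlier lemma, so that freezing and divergence coincide with non-membership; the case split below is unaffected.)

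\textbf{The two comparison relations.} Let $G$ be the function of Theorem \ref{OrdComp}, taking $X=Y$. Put
\[ \vec n \leq^{\varphi}_{\Gamma} \vec m \Equi G((a,\vec n),(a,\vec m)) \simeq 0, \]
which is f.~semi-computable in $X$: we compose $G$ with a recursive substitution and run a program that halts iff the output is $0$. For the co-semi-computable relation we set
\[ \vec n \leq^{\varphi}_{\neg\Gamma} \vec m \Equi \neg\bigl(G((a,\vec m),(a,\vec n)) \simeq 0 \text{ and } \rho^X(a,\vec m)<\rho^X(a,\vec n)\bigr). \]
It is cleaner to take the negation of ``$G((a,\vec m),(a,\vec n))\simeq 1$'' with the arguments swapped. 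Concretely, $\vec n \leq^\varphi_{\neg\Gamma}\vec m$ iff $\neg\,[\vec m <^* \vec n]$, where
\[ \vec m <^* \vec n \Equi G((a,\vec n),(a,\vec m)) \simeq 1. \]
This is semi-computable by the same device (halt iff output is $1$).

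\textbf{Verifying $(\ast)$.} Assume $\varphi(\vec m)<\omega_1$. We split according to how $\varphi(\vec n)$ compares with $\varphi(\vec m)$.
\begin{itemize}
\item If $\varphi(\vec n)\leq\varphi(\vec m)$, then clause (i) of Theorem \ref{OrdComp} gives $G\simeq 0$, so $\vec n\leq_\Gamma\vec m$. Since $G$ is single-valued, $G\not\simeq 1$, so $\vec n\leq_{\neg\Gamma}\vec m$ as well.
\item If $\varphi(\vec m)<\varphi(\vec n)$, which includes the case $\varphi(\vec n)=\omega_1$, then clause (ii) gives $G\simeq 1$. Hence $G\not\simeq 0$ and $\vec n\not\leq_\Gamma\vec m$, and also $\vec m<^*\vec n$, so $\vec n\not\leq_{\neg\Gamma}\vec m$.
\end{itemize}
Thus all three conditions in $(\ast)$ are equivalent whenever $\varphi(\vec m)<\omega_1$, and $\varphi$ is a $\Gamma^X$-norm on $A$.

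\textbf{Main obstacle.} The delicate point is that $G$ is only a partial (f.~semi-computable) function. The proof therefore depends on the hypothesis $\rho(\vec m)<\omega_1$ in $(\ast)$: that is exactly when Theorem \ref{OrdComp} guarantees $G$ converges on the pair in either order. We must also make sure that ``$G(\ldots)\simeq i$'' is semi-computable in the $\downarrow$ sense rather than only in the $\Downarrow$ sense. This is handled by the program that queries $G$ and freezes or diverges unless the answer is $i$, together with the $k_0/k_1$ lemma.
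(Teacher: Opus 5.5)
Your argument is correct and is essentially the paper's proof. The paper norms $U^X$ by $\rho^X$ and takes $\gamma\leq^{\rho}_{\Gamma}\delta \Leftrightarrow G(\gamma,\delta)\simeq 0$ and $\gamma\leq^{\rho}_{\neg\Gamma}\delta \Leftrightarrow G(\gamma,\delta)\not\simeq 1$. That is exactly what your final definitions become after pulling back along the index $a$: your $<^*$ has the same argument order as $\leq^{\varphi}_{\Gamma}$. Delete the first formula you wrote, the one with the $\rho^X$ conjunct, since it is not evidently co-semi-computable. Also, the replacement of $a$ by $k_1(a)$, which you present as optional, is needed to get ``$\varphi(\vec m)<\omega_1$ exactly on $A$''. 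This is because $\rho^X$ also gives countable rank to divergent but non-freezing computations (the paper glosses over this too), whereas $\langle k_1(a)\rangle^X(\vec m)$ either converges or freezes.
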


{\pf}It suffices to show this for the universal f.~semi-c. in $X$ set $U^X
.$We verify $(\ast)$ above for $\rho = \rho^X$ and $\Gamma = \Gamma^X$. Using
the Stage Comparison Theorem \ref{OrdComp} we can take

{\hspace{3em}}$\gamma \leq^{\rho}_{\Gamma} \delta \Equi H (\gamma, \delta) =
0$\quad and\qquad$\gamma \leq^{\rho}_{\neg \Gamma} \delta \Equi H (\gamma,
\delta) \neq 1$.

{\nod}as $H$ is partial f.~computable in $X$, we are done. {\qed}

\

We now prove a key result that delivers many basic results for the f.c. and
f.~semi-c. pointclasses. 
\begin{theorem}[Gandy Selection for Feedback Computation]\label{GSTFC} For any $X \in \,^{\omega} 2$, there exists$\textsf{}$ a
  function $\tmop{Sel}$${{^X}}{\in}^{< \omega} 2$, partial
  feedback semicomputable in $X,$ such that for all $e, m$ the following are
  equivalent:
  
  (i) $\ex p \in \omega . \la e\ra^X \left( p, \vec{m} \right) \downarrow\,$ ;
  
  (ii) $ \pa{e}^X \left( \tmop{Sel}^X \left( e, \vec{m} \right), \vec{m}
  \right) \downarrow\,$.
  
  {\nod}The function $\tmop{Sel}^X$ is uniformly definable in $X$.
\end{theorem}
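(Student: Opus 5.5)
The plan is to transcribe the proof of Theorem \ref{KSelection} (Gandy Selection for Kleene Recursion) into the feedback setting. In place of the Kleene Stage Comparison function we use the feedback Stage Comparison function $G$ of Theorem \ref{OrdComp}, and in place of the Kleene Recursion Theorem \ref{KRT} we use the Feedback Recursion Theorem \ref{FCRT}. Fix $X$; everything below is uniform in $X$. As usual we write a single $m$ for $\vec m$. First fix a p.r. map $e\mapsto e^+$ with $\pa{e^+}^X(p,m)\simeq\pa{e}^X(p+1,m)$. This is obtained in the ordinary way by an $S^n_m$-style modification of code, which passes unchanged to feedback machines because queries are simply carried along. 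Since the statement concerns $\downarrow$ rather than $\Downarrow$, we measure ranks of the relevant computations via $\rho$. By the lemma giving $k_0,k_1$ we may, if convenient, replace $e$ by an index whose non-freezing domain is exactly the $\downarrow$-domain of $\pa{e}^X$. Then $\rho(\gamma)<\omega_1$ iff $\gamma\downarrow$, which is what the comparisons with $G$ require.

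Define a feedback procedure $F(f,e,m)$ by three steps.

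Step 1: compute $G(\pa{e,(0,m)},\pa{f,(e^+,m)})$. If the value is $0$, output $0$.

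Step 2: if the value is $1$, obtain $q=\pa{f}^X(e^+,m)$. This requires actually running that computation, which converges because Stage Comparison has told us its rank is countable. Then compute $G(\pa{e,(0,m)},\pa{e,(q+1,m)})$. If this is $0$, output $0$.

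Step 3: otherwise output $q+1$.

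The claim that $F$ is f.~semi-computable in $X$ uses only the fact that $G$ is, together with closure under composition and sequencing. That closure is immediate for feedback machines: they just run the subprocesses, and freeze whenever a subprocess freezes. By Theorem \ref{FCRT} take $\bar f$ with $\pa{\bar f}^X(e,m)\simeq F(\bar f,e,m)$ (the self-referential call is $\pa{\bar f}^X(e^+,m)$), and set $\tmop{Sel}^X(e,m)=\pa{\bar f}^X(e,m)$.

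The verification of (i)$\Rightarrow$(ii) goes by induction on the least $p$ with $\pa{e}^X(p,m)\downarrow$, exactly as in the Kleene proof. The direction (ii)$\Rightarrow$(i) is trivial.

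In Case $p=0$, $\rho(e,(0,m))<\omega_1$, so $G$ in Step 1 is defined. If it returns $1$, then $\rho(\bar f,(e^+,m))<\rho(e,(0,m))$, so $q$ exists. The second comparison is then defined too, since one argument has countable rank. A return of $0$ gives output $0$. A return of $1$ gives $\rho(e,(q+1,m))<\omega_1$, so the output $q+1$ is good.

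In Case $p>0$, the induction hypothesis applied to $e^+$ gives $\tmop{Sel}^X(e^+,m)=q$ with $\pa{e}^X(q+1,m)\downarrow$. Since $\rho(e,(0,m))=\omega_1$, both comparisons return $1$, and the output is $q+1$.

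The main obstacle is the possibility of freezing. The difference from the Kleene setting is that a feedback computation with $\rho=\omega_1$ may freeze rather than diverge, and $G$ is only guaranteed defined when one argument has countable rank. I must check that in every branch the procedure only evaluates $G$ on pairs where Theorem \ref{OrdComp} guarantees a value, and only runs $\pa{\bar f}^X(e^+,m)$ after learning its rank is countable. Otherwise the query inside Step 2 would freeze the whole computation, and (ii) would fail even though (i) holds. This is the point where care with $\downarrow$ versus $\Downarrow$, handled via the $k_0,k_1$ lemma, is needed.
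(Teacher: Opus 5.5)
Your route is the paper's. You transcribe the proof of Theorem~\ref{KSelection}, substituting the feedback Stage Comparison function $G$ and the Feedback Recursion Theorem, and induct on the least $p$ with $\pa{e}^X(p,m)\da$. You also go beyond the paper, which leaves the definedness checks to the reader ``for analogous reasons'': you notice that feedback rank satisfies $\rho(\gamma)<\omega_1$ iff $\gamma\Downarrow$, not iff $\gamma\da$. Normalising $e$ via $k_1$ handles this for the computations $\pa{e}^X(\cdot,m)$. This works provided you apply it once at the top, setting $\tmop{Sel}^X(e,m)\simeq\pa{\bar f}^X(k_1(e),m)$, and observe that $(k_1(e))^+$ again satisfies $\Downarrow\Leftrightarrow\da$, so the induction stays among normalised indices.

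The gap is that you then use, for $\bar f$, exactly the inference you rejected for $e$. In Case $p=0$, if Step~1 returns $1$, Stage Comparison gives only $\rho(\bar f,(e^+,m))<\rho(e,(0,m))<\omega_1$, i.e.\ $\pa{\bar f}^X(e^+,m)\Downarrow$. Your claim that it ``converges because Stage Comparison has told us its rank is countable'' does not follow, since a non-freezing feedback computation may diverge. If it did, running it in Step~2 would make $F(\bar f,e,m)$ diverge, and $\tmop{Sel}^X(e,m)$ would be undefined although (i) holds. Your obstacle paragraph guards only against freezing, but countable rank excludes freezing, not divergence. Either of two patches closes this.

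(a) In Step~1, compare $\pa{e,(0,m)}$ with $\pa{k_1(f),(e^+,m)}$ instead of $\pa{f,(e^+,m)}$. A value $1$ then yields $\pa{\bar f}^X(e^+,m)\da$. In Case $p>0$ the induction hypothesis $\pa{\bar f}^X(e^+,m)\da q$ still gives $\rho(k_1(\bar f),(e^+,m))<\omega_1=\rho(e,(0,m))$, so Step~1 still returns $1$.

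(b) Alternatively, perform each evaluation in Steps~1--2 by first querying whether the subcomputation converges, freezing if the answer is $\ua$, and only then extracting its value. Then any non-freezing run of $\pa{\bar f}^X$ has each such query answered $\da$, every extraction terminates, and the run halts. So for $\bar f$ too, countable rank implies convergence.

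With either patch, the rest of your case analysis goes through as written.
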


{\pf} The argument is pretty much that of the Selection Theorem for Kleene recursion Theorem \ref{KSelection} above with the obvious notational adjustments. The proof does not depend intrinsically on the difference between the two modes of oracle usage. Again there is a p.r. function $k$ so that for any index $e$, $k(e)  = e^+$ is
an index so that
$$\pa{e^+ }^X  (p, m) \simeq \pa {e} ^X (p + 1, m)\mbox{ for }p \in \omega.$$ 
We now
take $G(u, v)$ from the feedback computational Stage Comparison Theorem \ref{OrdComp}, and define
$F (f, p, m, X)$ to be the functional which accords with the following
procedure.

Step 1 It computes $G( \la e, 0, m, X \ra, \pa{f, e^+, m, X})$.

Step 2 If this value is $0$, then that is the final value for $F (f, p, m,
X)$. If this value is $1$, it then computes $G( \la e, 0, m, X \ra, \la e,
\pa{ f }^X(e^+, m) + 1, m \ra )$.

Step 3 If this latter value is 0, then that is the final value. If this value
is $1$, it then computes $\pa{ f }^X (e^+, m) + 1$ as its final value.\\

\noindent By the Feedback Computability Recursion Theorem there is $\bar{f}$ so that $\pa{ \overline{f} } ^{X} (p,
m) \simeq F (\bar{f}, p, m,X)$. We take ${{ \tmop{Sel} }}^{X}  = \pa{
\overline{f} }^{X} $. The argument  that this works is as for Kleene Recursion.

Referring back to the proof of Theorem \ref{KSelection}, let $p (e, m,X)$ be the least $p$ such that $\pa{e }^{X}(p, m)\da$ if there is
such a $p$. We show the equivalence of {\em(i)} with {\em{(ii)}} by induction on
$  p (e  , m, X)$. {\em(ii)} $ \Imp$ {\em(i)} is immediate.  
Again we split into cases considering if 
({\em Case 1})  $p (e, m, X) = 0$ or is ({\em Case 2}) $ >0$. We let the reader now check through the argument, which really involves substituting $\pa{e}^X(e,m)$ for $\{e\}^\mj(e,m,X)$ wherever the former or its variants occur; and verifying that expressions such as $G( \la e, 0, m, X \ra, \pa{\bar{f}, e^+, m, X} ) =
i$ are defined for the analagous reasons.
\mbox{ }{\qed}\\

With the  Selection Theorem established we now can use it to get a number of
results about the structure of partial computable functions and
semi-computable sets. The first is immediate from this theorem.

\begin{lemma}
  \label{existclosure}For any relation $R \textsf{}$ that is semi-computable
  in $\textsf{}$X, for any $m$ we have:
   $$\ex q R (q, m) \equi R (\tmop{Sel}^X (m), m).$$ 
\end{lemma}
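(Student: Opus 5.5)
The plan is to fix a semi-computable index for $R$ and apply Theorem \ref{GSTFC} to it; the only work is in saying exactly what ``$\tmop{Sel}^X(m)$'' means. By the Corollary following the lemma on $k_0,k_1$, $R$ being f.~semi-computable in $X$ gives an index $e_R$ with
$$R(q,m)\Equi \pa{e_R}^X(q,m)\downarrow \quad\mbox{for all } q,m.$$
The $\Downarrow$ and $\downarrow$ versions of semi-computability were shown equivalent via $k_0$, so this is no loss of generality. We then read $\tmop{Sel}^X(m)$ in the statement as $\tmop{Sel}^X(e_R,m)$. The index $e_R$ is fixed once $R$ is, so suppressing it is harmless, and the resulting function of $m$ is still partial f.~semi-computable in $X$, for instance by the $S^n_m$/substitution argument underlying the Recursion Theorem \ref{FCRT}.

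For the direction ($\Leftarrow$), suppose $R(\tmop{Sel}^X(e_R,m),m)$ holds. Then in particular $\tmop{Sel}^X(e_R,m)\downarrow$ with some value $q_0\in\omega$, and $R(q_0,m)$ holds, so $\ex q\,R(q,m)$ is witnessed by $q_0$. This is just (ii)$\Imp$(i) of Theorem \ref{GSTFC}.

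For the direction ($\Rightarrow$), suppose $\ex q\,R(q,m)$. Then $\ex p\,\pa{e_R}^X(p,m)\downarrow$, which is clause (i) of Theorem \ref{GSTFC} with $e=e_R$ and $\vec m=m$. By (i)$\Imp$(ii) we get $\pa{e_R}^X(\tmop{Sel}^X(e_R,m),m)\downarrow$, and by the choice of $e_R$ this says $R(\tmop{Sel}^X(e_R,m),m)$.

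There is no real obstacle here. The one point to check is that (ii) of Theorem \ref{GSTFC} is read with the convention that $\pa{e}^X(t,\vec m)\downarrow$ for a partial term $t$ entails $t\downarrow$. This holds because the computation must first evaluate $\tmop{Sel}^X$ before it can proceed. So when $\ex q\,R(q,m)$ fails, the right-hand side is false whether or not $\tmop{Sel}^X(e_R,m)$ is defined, and the equivalence holds outright rather than as a Kleene-equality between possibly undefined expressions.
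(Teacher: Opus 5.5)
Your argument is correct and is the same as the paper's, which records the lemma as immediate from the Selection Theorem~\ref{GSTFC} applied to an index for $R$, with $\tmop{Sel}^X(m)$ read as $\tmop{Sel}^X(e_R,m)$. Your conversion of a $\Downarrow$-index into a $\downarrow$-index (via $k_0$ and the Corollary), and your remark that clause (ii) already forces $\tmop{Sel}^X(e_R,m)\downarrow$, make explicit what the paper leaves tacit.
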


It might be $\textsf{}$tempting to claim that the union of two semi-computable
in $X$ relations, $R$ and ${S}$ say, is semi-computable in $X$, is
established by running a procedure $P$ that simulates both functions
$\pa{e}^X$ and $\pa{f}^X$ simultaneously whose domains are $R$ and $S$
respectively until, if possible, an $m$ falls into one domain or the other (or
neither). But this will not work since we may have $m \in \tmop{dom} \left(
\pa{e}^X \right) \back \tmop{dom} \left( \pa{f}^X \right)$, but the
computation $\pa{f}^X (m)$ being simulated is not just divergent, but has
$\mathfrak{T}^X (f, m)$ illfounded. Moreover this illfoundedness is revealed
by the combined process freezing before $m$'s membership in $\dom \pa{e}^X$ is
revealed. \ This would render $P$ undefined. The use of the Selection Theorem
neatly gets around this.

\begin{lemma}
  \label{closure}
  
  The class of relations f.~semi-computable in $X$ is closed under: (a) finite
  unions; (b) bounded existential, and existential number quantification `$\ex
  n$'; (c) definitions by cases.
\end{lemma}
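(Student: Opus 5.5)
The plan is to reduce everything to the Selection Theorem \ref{GSTFC}, in the form of Lemma \ref{existclosure}, using semi-computability defined by convergence $\downarrow$ (which the Corollary following the lemma on $k_0,k_1$ shows is equivalent to the $\Downarrow$ definition). Throughout, let $R=\dom\pa{e}^X$ and $S=\dom\pa{f}^X$ (relative to convergence). The argument will proceed in the order (b), (a), (c).

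\emph{Part (b).} Suppose $R(q,\vec m)\Equi \pa{e}^X(q,\vec m)\downarrow$. Let $P$ be the procedure which on input $\vec m$ first computes $s=\tmop{Sel}^X(e,\vec m)$ and then runs $\pa{e}^X(s,\vec m)$, halting if that halts. By Theorem \ref{GSTFC}, $P(\vec m)\downarrow$ iff $\ex q\, R(q,\vec m)$. Note that when $\ex q R(q,\vec m)$ fails, $\tmop{Sel}^X$ may freeze; this is harmless, since we only need non-convergence there.

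For bounded existential quantification $\ex q<n\, R(q,\vec m)$, apply the same argument to the relation $R'(q,n,\vec m)\Equi q<n\wedge R(q,\vec m)$. This is semi-computable because the procedure first tests $q<n$ (an ordinary recursive check) and only then runs $\pa{e}^X(q,\vec m)$, diverging otherwise. The index in each case is obtained primitive recursively from $e$.

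\emph{Part (a).} Define $T(i,\vec m)$ to hold iff ($i=0\wedge R(\vec m)$) or ($i=1\wedge S(\vec m)$). $T$ is semi-computable: the procedure reads $i$ and then runs only the one relevant computation, $\pa{e}^X(\vec m)$ or $\pa{f}^X(\vec m)$, and diverges (say, by looping) if $i>1$. Then $R\cup S=\{\vec m\mid \ex i\, T(i,\vec m)\}$, which is semi-computable by (b). This avoids the freezing problem described before the lemma, because no single computation ever simulates both $\pa{e}^X$ and $\pa{f}^X$ side by side; the selection function does the choosing through stage comparison.

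\emph{Part (c).} Suppose $R_0,\ldots,R_{n}$ are semi-computable and $C_0,\ldots,C_n$ are f.c. in $X$ conditions which partition the domain. Define $R(\vec m)$ by: $R(\vec m)$ holds iff $R_i(\vec m)$ for the $i$ with $C_i(\vec m)$. The procedure computes the characteristic functions $K_{C_i}(\vec m)$ in turn; these are total and non-freezing by the definition of f.c. It then runs the semi-computation for the unique $i$ with $C_i(\vec m)$. Alternatively, write $R$ as $\bigcup_i (C_i\cap R_i)$ and use (a), noting that the intersection of an f.c. set with a semi-computable set is semi-computable by running sequentially.

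I expect the only real subtlety to lie in (a) and (b). One must check that $\tmop{Sel}^X$ freezing or diverging outside the domain does no harm, and that the combined procedures remain feedback procedures, uniformly obtained via the Feedback Recursion Theorem \ref{FCRT} and the $S^n_m$ machinery. The remaining steps are routine.
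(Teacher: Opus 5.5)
Your proposal is correct and follows essentially the paper's route: everything is reduced to the Selection Theorem \ref{GSTFC}. Your $T(i,\vec m)$ plays the role of the paper's $H(p,m)=\pa{F(p)}^X(m)$, and part (c), which the paper leaves as an exercise, is handled routinely. The one difference is in (a): you derive it from (b) and take the domain of the composite $\pa{e}^X(\tmop{Sel}^X(e,\vec m),\vec m)$, whereas the paper takes $R\cup S=\dom(\tmop{Sel}^X(h,\cdot))$ directly. Your version is slightly more careful, since it uses only the equivalence (i)$\Leftrightarrow$(ii) of the theorem. It does not need the unstated fact that $\tmop{Sel}^X$ is undefined when no witness exists.
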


{\pf}(a) Suppose $R = \tmop{dom} \left( \pa{e}^X \right)$ and $S = \tmop{dom}
\left( \pa{f}^X \right)$ are two semi-computable relations on $\omega$.

Let $F$ be the (ordinary) Turing function defined by $F (p) = e$ if $p = 0$
and $F (p) = f$ if $p > 0$. Set $H (p, m) = \la F (p) \ra^X (m)$. Then $m \in
R \cup S$ iff $\ex p H (p, m) \downarrow$. If $\{ h \} = H$, then $\ex p H (p,
m) \equi H (\tmop{Sel}^X (h, m), m, X)$. Let $\pa{s}^X (m) \simeq \tmop{Sel}^X
(h, m)$. Then $R \cup S = \tmop{dom} \left( \pa{s}^X \right)$. 

(b) This follows from Lemma \ref{existclosure}. (c) EXercise.{\qed}

\begin{lemma}
  A relation $R$ is computable in $X$ iff both $R$ and $\neg R$ are f.~semi-computable in $X$. Moreover a code for $R$ as a f.~computable relation, is then
  effectively obtainable from any pair of codes $(e_0,e_1)$ for $R$ and $\neg R$ as f.~semi-computable relations.
  \end{lemma}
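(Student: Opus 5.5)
The forward direction is immediate. If $R$ is f.~computable in $X$ with index $e$, then every computation $\pa{e}^X(\vec m)$ converges with value $K_R(\vec m)$. Composing with a check of the output, which gives a p.r. index, yields procedures that converge exactly on $R$ and exactly on $\neg R$ respectively; on the other inputs they enter a self-referential freezing loop, as in the construction of $k_1$ in the lemma above. By the Corollary, convergence and non-freezing give the same semi-computable sets, so $R$ and $\neg R$ are both f.~semi-computable in $X$.

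For the converse and its effective part I will use Gandy Selection (Theorem \ref{GSTFC}). Suppose we are given $e_0,e_1$ with $R=\dom(\pa{e_0}^X)$ and $\neg R=\dom(\pa{e_1}^X)$, where we use convergence $\downarrow$, converting by the p.r. maps $k_0$ of the earlier lemma if needed. Exactly as in the proof of Lemma \ref{closure}(a), let $h=h(e_0,e_1)$ be a p.r.-obtained index for
$$H(p,\vec m)\simeq \pa{F(p)}^X(\vec m),$$
where $F(0)=e_0$ and $F(p)=e_1$ for $p>0$. Since $R\cup\neg R$ is everything, for every $\vec m$ there is some $p$ with $H(p,\vec m)\downarrow$. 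Theorem \ref{GSTFC} then gives that $\tmop{Sel}^X(h,\vec m)$ is defined, so it is a value $p$ with $H(p,\vec m)\downarrow$.

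Next define the procedure $\pa{c}^X(\vec m)$: compute $p=\tmop{Sel}^X(h,\vec m)$, output $1$ if $p=0$, and output $0$ otherwise. Since $\tmop{Sel}^X=\pa{\bar f}^X$ is a fixed index, the index $c$ is obtained primitive recursively from $h$, and hence from $(e_0,e_1)$, by the $S^m_n$-theorem for feedback computation, which is implicit in Theorem \ref{FCRT}. Correctness follows from the choice of $h$. If $p=0$ then $\pa{e_0}^X(\vec m)\downarrow$, so $\vec m\in R$. If $p>0$ then $\pa{e_1}^X(\vec m)\downarrow$, so $\vec m\notin R$. Thus $\pa{c}^X=K_R$. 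It is total and convergent, so in particular $\pa{c}^X(\vec m)\Downarrow$ for all $\vec m$, as the definition of f.~computability requires.

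The main obstacle is the step claiming that $\tmop{Sel}^X(h,\vec m)$ actually converges, rather than merely being non-freezing, whenever (i) of Theorem \ref{GSTFC} holds. The statement of that theorem only gives $\pa{h}^X(\tmop{Sel}^X(h,\vec m),\vec m)\downarrow$, which implicitly presupposes that $\tmop{Sel}^X(h,\vec m)\downarrow$. I would verify this by inspecting the selection proof: in both cases of the induction on $p(e,m,X)$, the value is produced by defined calls to $G$ and by the inductively defined $\pa{\bar f}^X(e^+,m)$. The point to check is that the Stage Comparison function $G$, which is only f.~semi-computable, converges on the relevant pairs, because at least one rank in each pair is below $\omega_1$. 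Once that is confirmed, the rest of the argument is routine index bookkeeping.
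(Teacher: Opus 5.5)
Your proposal is correct and follows the paper's proof. Both apply Gandy Selection (Theorem \ref{GSTFC}) to the two-way function $p\mapsto\pa{F(p)}^X(\vec m)$ that chooses between $e_0$ and $e_1$. The only, inessential, difference is that the paper first normalises $e_0,e_1$ to output $1$ and $0$ and returns the value of the selected computation, whereas you read $K_R$ off the selected index itself.

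The convergence of $\tmop{Sel}^X(h,\vec m)$ that you flag as the main obstacle is already part of clause (ii) of that theorem, which the paper simply invokes. If you do check it against the selection proof, note that feedback ranks below $\omega_1$ witness only $\Downarrow$, not $\downarrow$. So you should first normalise both codes so that they only converge or freeze: use $k_0$ for $\Downarrow$-codes and $k_1$ for $\downarrow$-codes.
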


{\pf}$\left( \Imp \right)$: If $R$ is f.~computable in $X$, so is $\neg R$,
and both are f.~semi-computable in $X$.

$(\Leftarrow)$: As both $R$ and $\neg R$ are the domains of some partial f.~computable in $X$ functions, we can choose indices $e_0, e_1$, modifying those
partial computable functions if need be, so that

\hspace{0.7em}$R (m, X) \equi \pa{e_0}^X (m, X) \downarrow 1$

$\neg R (m, X) \equi \pa{e_1}^X (m, X) \downarrow 0.$

\nod Let $f$ be computable with $f (0) = e_0$, $f (1) = e_1$ and $f (k) \uparrow$ for
$k > 1$. Let $G (i, m) = \pa{f (i)}^X (m)$. Suppose that $G = \pa{g}^X$. Then
for all $(m, X)$ $K (m) =_{\tmop{df}} G (\tmop{Sel}^X (g, m), m) )$
is defined, and then $K$ is the total computable characteristic function of $R$.
Hence the latter is f.~c. in $X.$ {\qed}

\begin{lemma}
  For any f.~partial function F,
  
  (i) F is f.~semi-computable in $X$ iff $\tmop{Gr} (F)$ is f.~semi-computable
  in $X$.
  
  (ii) F is f.~computable in $X$ iff $F$ is total and $\tmop{Gr} (F)$ is f.
  computable in $X$.
\end{lemma}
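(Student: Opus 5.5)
For (i), the statement ``$F$ is f.~semi-computable in $X$'' will be read as ``$F$ is partial feedback computable in $X$'', i.e.\ $F=\pa{e}^X$ for some $e$. The plan is to prove each direction by an explicit procedure, using Lemma \ref{L2.12} for the forward direction of (i), the Selection Theorem \ref{GSTFC} for the reverse direction of (i), and Lemma \ref{closure} together with the preceding lemma for (ii).

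\emph{(i), $\Imp$.} Suppose $F=\pa{e}^X$. Lemma \ref{L2.12} supplies $g(e)$ with $\pa{g(e)}^X(m,k,n)\da \equi \pa{e}^X(k)=n$. Hence
\[\tmop{Gr}(F)=\{(k,n)\mid \pa{e}^X(k)=n\}\]
is the domain of a partial f.~computable function, after the trivial reindexing that drops the dummy argument $m$. By the Corollary following the lemma on $k_0,k_1$, being the convergent part of a domain suffices for f.~semi-computability, so $\tmop{Gr}(F)$ is f.~semi-computable in $X$.

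\emph{(i), $\Leftarrow$.} Suppose $\tmop{Gr}(F)=\dom(\pa{a}^X)$. Let $\pa{b}^X(n,k)\simeq\pa{a}^X(k,n)$, with the search variable $n$ moved to the first slot. Then define
\[\pa{s}^X(k)\simeq \tmop{Sel}^X(b,k).\]
By Theorem \ref{GSTFC}, $\pa{s}^X(k)\da$ iff $\ex n\,(k,n)\in\tmop{Gr}(F)$, i.e.\ iff $k\in\dom F$. Moreover, whenever it converges, the value $n=\pa{s}^X(k)$ satisfies $\pa{b}^X(n,k)\da$, i.e.\ $(k,n)\in\tmop{Gr}(F)$. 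Since $F$ is a function, $n=F(k)$. Thus $F=\pa{s}^X$.

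This is the step I expect to be the main obstacle. A naive search through $n=0,1,2,\ldots$ fails: some $\pa{a}^X(k,n')$ with $n'<F(k)$ may freeze, which would freeze the whole search. This is exactly the union problem discussed before Lemma \ref{closure}, and Selection is what circumvents it. The remaining work is to check that $\tmop{Sel}^X$ really returns a witness and not merely a convergence signal; that is what clause (ii) of Theorem \ref{GSTFC} gives.

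\emph{(ii).} If $F=\pa{e}^X$ is f.~computable, it is total. By (i), $\tmop{Gr}(F)$ is f.~semi-computable. Its complement is
\[\{(k,n)\mid \ex n'\,(n'\neq n\wedge (k,n')\in \tmop{Gr}(F))\},\]
which is f.~semi-computable by Lemma \ref{closure}(b), because $n'\neq n$ is computable and the class is closed under $\ex n'$ and conjunction with computable relations. The preceding lemma then makes $\tmop{Gr}(F)$ f.~computable. Conversely, if $F$ is total and $\tmop{Gr}(F)$ is f.~computable, then $\tmop{Gr}(F)$ is in particular f.~semi-computable. By (i), $F=\pa{s}^X$, and totality of $F$ gives $\pa{s}^X(k)\da$ for all $k$, so $F$ is f.~computable. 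Alternatively, one can search $n=0,1,\ldots$ using the total characteristic function of $\tmop{Gr}(F)$; totality of that characteristic function means no freezing can occur, so the naive search is safe here.
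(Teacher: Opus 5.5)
Your approach is presumably what the paper intends: its own proof is only ``Straightforward'', and your route (Lemma \ref{L2.12} for the graph, $\tmop{Sel}^X$ to recover $F$ from its graph, complements for (ii)) is how the paper itself uses $\tmop{Sel}^X$ in Lemma \ref{closure}(a). However, the step you single out as the crux does not follow from the result you cite. Theorem \ref{GSTFC} only gives the equivalence $\ex p\,\pa{e}^X(p,\vec m)\da \iff \pa{e}^X(\tmop{Sel}^X(e,\vec m),\vec m)\da$. Clause (ii) tells you that $\tmop{Sel}^X(b,k)$ is a witness \emph{when some witness exists}. Nothing in that statement stops $\tmop{Sel}^X(b,k)$ from converging to an arbitrary value when $k\notin\dom F$. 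Nor does the Stage Comparison Theorem behind it, which leaves $G(\gamma,\delta)$ unspecified when both ranks are $\omega_1$. So two of your claims are unsupported: the direction ``$\pa{s}^X(k)\da$ only if $k\in\dom F$'', and the claim that every output of $\pa{s}^X$ is a witness. Without them $\pa{s}^X$ may be defined where $F$ is not, and $F=\pa{s}^X$ fails.

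The repair is a one-line verification step. Let $\pa{s}^X(k)$ compute $n=\tmop{Sel}^X(b,k)$, then run $\pa{b}^X(n,k)$, and output $n$ only once that halts. Then $\pa{s}^X(k)\da$ iff $\pa{b}^X(\tmop{Sel}^X(b,k),k)\da$, iff $\ex n\,(k,n)\in\tmop{Gr}(F)$, iff $k\in\dom F$. Any output $n$ satisfies $(k,n)\in\tmop{Gr}(F)$, hence $n=F(k)$. The paper's Lemma \ref{closure}(a) leans on the same unstated property and is fixed the same way.

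Part (ii) is unaffected: for total $F$ a witness always exists, and your alternative naive search through the total characteristic function is sound anyway. For the forward direction of (ii) you can skip complements and the preceding lemma. Compute $K_{\tmop{Gr}(F)}(k,n)$ directly by running the total $\pa{e}^X(k)$ and comparing its output with $n$.
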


{\pf}Straightforward.

\

We are also going to have an {\em effective choice principle}, a straightforward application of the Selection Theorem.

\begin{theorem}
If $R(p,\vec m, X)$ is a f.~semi-computable relation, and if \linebreak $\all\vec m\all X\ex p R(p,\vec m, X)$, there there is a  f.~semi-c. choice function $F$ so that \\$\all\vec m\all X R(F(\vec m,  X),\vec m,\vec X)$.
\end{theorem}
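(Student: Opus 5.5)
The plan is to apply the Gandy Selection Theorem \ref{GSTFC} directly to an index for $R$. First, since $R$ is f.~semi-computable, by the Corollary following the $k_0,k_1$ lemma there is an index $e$ such that
$$R(p,\vec m,X)\equi \pa{e}^X(p,\vec m)\downarrow .$$
Here $X$ is treated as the oracle and $p$ is the first integer argument. If necessary, we first reorder arguments via an $S^n_m$-style manipulation so that $p$ is the distinguished first coordinate, as in the statement of Theorem \ref{GSTFC}. We fix this $e$ once and for all; it does not depend on $\vec m$ or $X$.

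Next, define
$$F(\vec m,X)\simeq \tmop{Sel}^X(e,\vec m),$$
where $\tmop{Sel}^X$ is the selection function of Theorem \ref{GSTFC}. Since that theorem states that $\tmop{Sel}^X$ is partial f.~semi-computable in $X$ and uniformly definable in $X$, i.e.\ given by a single feedback index $\bar f$ independent of $X$, $F$ is a single partial feedback computable functional $\pa{s}^X(\vec m)$. Here $s$ is obtained from $\bar f$ by fixing the parameter $e$ (an $S^n_m$ step). This gives the required f.~semi-c.\ status of $F$.

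Finally, verify the choice property. Given $\vec m, X$, the hypothesis $\all\vec m\all X\ex p R(p,\vec m,X)$ yields clause (i) of Theorem \ref{GSTFC}, namely $\ex p\,\pa{e}^X(p,\vec m)\downarrow$. Hence clause (ii) holds: $\pa{e}^X(\tmop{Sel}^X(e,\vec m),\vec m)\downarrow$. This presupposes that $\tmop{Sel}^X(e,\vec m)$ is itself defined, since otherwise the composed computation would freeze or diverge. Therefore $F(\vec m,X)\downarrow$ and $R(F(\vec m,X),\vec m,X)$ holds. This is just Lemma \ref{existclosure} read as a statement about the selector. Moreover, $F$ is total, so it is in fact a f.~computable choice function once we check convergence through $\Downarrow$ as well; that follows because convergence implies a wellfounded tree.

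The main point requiring care is uniformity in $X$. One must check that the Recursion-Theorem-produced index $\bar f$ in the proof of Theorem \ref{GSTFC} does not depend on $X$, so that $F$ is a single functional of $(\vec m,X)$ rather than a family indexed by $X$. I would verify this by noting that the Stage Comparison function $G$ was built uniformly in $X$ (the operator $I$ is defined for all $X$ at once) and that Theorem \ref{FCRT} is stated for functionals with real arguments $\vec X$.
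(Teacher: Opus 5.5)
Your proposal is correct and takes the paper's route. The paper gives no separate proof and calls this a straightforward application of the Selection Theorem, which is exactly your choice of $F(\vec m,X)\simeq\tmop{Sel}^X(e,\vec m)$ for a $\downarrow$-index $e$ of $R$; your further remarks are correct refinements of what the paper leaves implicit, namely that uniformity in $X$ comes from the uniform construction of $G$ and $\tmop{Sel}^X$ via the Feedback Recursion Theorem, and that $F$ is in fact total and hence f.~computable.
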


\begin{lemma}
  For any function $X \in \bai$ the f.~semi-computable in $X$ relations form a
  Spector class.
\end{lemma}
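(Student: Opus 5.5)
We use the standard definition of a Spector class, as in Moschovakis. A pointclass $\Gamma^X$ is a Spector class if it satisfies four conditions:
\begin{enumerate}
\item it contains all recursive (indeed arithmetic) relations and is closed under $\wedge$, $\vee$, $\all^\omega$, $\ex^\omega$ and recursive substitution;
\item it is $\omega$-parametrized, \ie has a universal set;
\item it has the prewellordering property;
\item it is closed under the relevant form of recursive (partial function) substitution.
\end{enumerate}
The plan is to check these clauses one at a time for $\Gamma^X$, the f.~semi-computable in $X$ relations. Almost every clause is already an earlier result, so the proof is mostly bookkeeping.

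\textbf{Step 1.} Recursive substitution and the presence of arithmetic relations come from Lemma~\ref{arithmetic}, since any arithmetic-in-$X$ relation is f.c.\ and hence f.~semi-c.\ in $X$. Recursive substitution itself is obtained by precomposing the input with an ordinary Turing computation.

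\textbf{Step 2.} Closure under conjunction is handled by a procedure that issues the query for the first domain and then the query for the second, converging iff both converge. A freeze in either query correctly freezes the combined procedure, because then $m$ is not in the intersection anyway. Disjunction and $\ex^\omega$ are Lemma~\ref{closure}(a),(b), which are the Gandy Selection consequences.

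\textbf{Step 3.} For $\all^\omega$, given $R=\dom\pa{e}^X$ with $R\sset\omega^{n+1}$, we write a procedure which, on input $\vec m$, successively issues the queries $?Q(e,(k,\vec m))?$ for $k=0,1,2,\ldots$. This procedure never halts, so it is the wrong shape as it stands. The fix is to replace the loop by a single query applied to a subsidiary process $\pa{e'}^X(\vec m)$ that performs this loop. Each $(k,\vec m)\in R$ is then answered $\da$, and we let $\pa{e'}$ run forever. Then $\pa{e'}^X(\vec m)\Downarrow$ (indeed $\ua$) iff every subquery is non-freezing, \ie iff $\all k\,R(k,\vec m)$. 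By the earlier lemma converting $\Downarrow$ into $\da$ (the $k_0$ of that lemma), the outer procedure converges exactly on $\all k R(k,\vec m)$. The one care needed is that $R$ is presented as the set where $\pa{e}\da$ rather than where it is merely non-freezing. To handle this, first pass to $k_1(e)$ so that divergence becomes freezing.

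\textbf{Step 4.} The universal set $U^X$ of Definition (universal f.~semi-computable set) provides $\omega$-parametrization; this uses the Recursion Theorem~\ref{FCRT} / $S^n_m$ setup. The prewellordering property is exactly the preceding lemma, which derives from Stage Comparison Theorem~\ref{OrdComp}.

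\textbf{Main obstacle.} The principal subtlety is the $\all^\omega$ clause together with the convergence/freezing distinction. The whole argument hinges on arranging, via $k_1$ and $k_0$, that "not in the relation" always manifests as freezing, so that a single $\Downarrow$ query over the infinite loop captures the universal quantifier. Everything else is citation of earlier lemmas.
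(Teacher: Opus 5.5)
Your proposal is correct and follows the same route as the paper, which simply checks each Spector-class clause against earlier results: arithmetic relations, Lemma~\ref{closure} (via Gandy Selection) for $\vee$ and $\exists^\omega$, the universal set $U^X$, and the prewellordering property obtained from Stage Comparison. You also give an explicit $\forall^\omega$ argument (pass to $k_1(e)$ so that non-membership becomes freezing, wrap the infinite query loop as a single subcomputation, then apply $k_0$) and a direct treatment of $\wedge$; these usefully fill in steps the paper asserts as ``established above'', whereas it only actually states $\forall^\omega$-closure in the Kleene setting.
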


{\pf} To form a Spector class the relations must be closed under $\wedge,
\vee$, bounded and unbounded existential and universal number quantification,
$\ex^{\omega}_{\leq}, \ex^{\omega}, \all^{\omega}_{\leq}, \all^{\omega} .$ It
must have $\omega$-parametized universal classes in each arity and have the
Prewellordering Property. These have now all been established above. \qed

\section{\normalsize The equivalence of feedback computability with Kleene recursion in $ ^2\mathsf{E}$}

We now prove the two Theorems \ref{ftok} and \ref{ktof} which demonstrate directly the
equivalence of the two kinds of
recursion. Firstly that Feedback recursion is an example of Kleene Recursion
in  Theorem \ \ref{ftok}.

\begin{lemma}
  \label{FtoK}(Feedback to Kleene) There is a p.r. function $k_1 \in \bai$ so that
  for any $X \in \bai$, any $e, m \in \omega$
  
  {\hspace{9em}}$\{ k_1 (e) \} ^\me (m, X) \simeq \pa{e}^X (m)$.
\end{lemma}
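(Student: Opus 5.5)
We plan to use the Kleene Recursion Theorem \ref{KRT} in $\me$: we construct a Kleene procedure $F(c,e,m,X)$, partial recursive in $\me$, which simulates the feedback computation $\pa{e}^X(m)$ step by step and uses the parameter $c$ as a name for ``itself''. A fixed point $\bar c$ then yields $k_1(e)$ as an $S^n_m$-instance of $\bar c$ applied to $e$, which is primitive recursive in $e$. The Turing part of $\pa{e}^X(m)$ (tape moves, oracle reads of $X$) is copied verbatim. The only non-trivial instruction is a halting query $?Q^X(e',m')?$, which must receive the answer $\downarrow$ or $\uparrow$ correctly whenever $\pa{e'}^X(m')\Downarrow$, and must produce a non-returning subcomputation call exactly when $\pa{e'}^X(m')\Uparrow$.

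To answer the query we apply the idea of Lemma \ref{Kconvergence}, relativised to the feedback simulation. Let $g_0$ be the index for which $\{g_0\}^\me(((e',m'),k),X)$ runs the simulation $\{\bar c\}^\me((e',m'),X)$ for $k$ steps, returning $0$ if it has halted and $1$ otherwise. Simulating for $k$ steps means that any query met within those steps is again answered by a subcomputation call. The query instruction is then replaced by the single Kleene call $?Q(g_0,(e',m'),X)?$, followed by $\me$ applied to $\lambda k.\{g_0\}^\me(((e',m'),k),X)$. The value $0$ is read as $\downarrow$ and $1$ as $\uparrow$, and the simulation of $\pa{e}^X(m)$ resumes.

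Correctness is checked by induction on the feedback rank $\rho(e,m,X)$ of the inductive definition of $I$. The inductive claim is the following: if $\pa{e}^X(m)\Downarrow$ with rank $\alpha$, then every $\{g_0\}^\me(((e,m),k),X)$ is defined with the correct halting status, so the Kleene query returns the correct $\downarrow/\uparrow$; and if $\pa{e}^X(m)\downarrow n$, then $\{k_1(e)\}^\me(m,X)\downarrow n$. If $\pa{e}^X(m)\uparrow$, the top-level simulation runs forever, so the Kleene computation diverges while its subqueries still converge. The converse direction is proved by induction on the Kleene rank $\rho^\me$: every convergent Kleene computation of this shape has its queries answered correctly, so the feedback computation has a wellfounded tree with the same value.

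For the freezing case, suppose some $\pa{e'}^X(m')\Uparrow$ is queried. Then, as in the note after Lemma \ref{Kconvergence}, the Kleene tree below that query is illfounded, so the Kleene computation has no value, matching the $\simeq$.

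The main obstacle is making the step-counting subcomputation $g_0$ coherent. Its $k$-step simulations issue their own nested queries, which must themselves be genuine Kleene calls (and not simulated). We must also check that a frozen branch anywhere in $\mathfrak T^X(e,m)$ leads to non-convergence of some $\{g_0\}$ instance, and hence to failure of the $\me$-call. We would handle this by proving the two inductions simultaneously and matching the stages of $I$ with those of $\Gamma^\me$.
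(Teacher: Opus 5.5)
Your proposal is correct and follows the paper's proof essentially step for step. The paper likewise takes a Kleene Recursion Theorem fixed point that copies the Turing part of $\pa{e}^X(m)$, replaces each feedback query by a Kleene query to the $k$-step simulation index $g_0$ of Lemma \ref{Kconvergence} (reading $\me$'s $0/1$ as $\downarrow/\uparrow$), argues by induction on feedback rank, and obtains $k_1$ by $S^n_m$. One point to tidy: $g_0$ must take the self-index as an argument (the paper writes $\{g_0\}(h,(\bar e,\bar m),X,k)$), since otherwise its definition refers to $\bar c$ before the fixed point exists. Your explicit treatment of the $\Uparrow$ case, via illfoundedness of the Kleene tree, usefully fills in a direction that the paper's induction leaves implicit.
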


{\pf} We devise a Kleene partial recursive function $F (h, e, m, X) $ which acts to simulate $\pa{e}^X (m)$. \ Any query initiating a f.~subcomputation call $? Q^X (e_i, m_i) ?$ which occurs during a run of
$\pa{e}^X (m) $ receives back a $\da/ \ua$ response, which we'll here identify with $0
/ 1$ and we shall have $F (h, e, m, X)$ mirror these $0 / 1$ responses, as
outputs from applications of  a Kleene computation.

$F (h, e, m, X)$ proceeds by running a simulation of $\pa{e}^X (m)$, using
the non-query f.c. instructions coded in $\pa{e}^X$ but considered as Kleene
recursion instructions - after all, they can be considered identical. $F$
writes the output from this copy of $\pa{e}^X (m)$ to its own output tape.
When the simulation reaches a query $? Q^X (\bar{e}, \bar{m})$? it replaces it
with a Kleene style query $? Q  (g, h, (\bar{e}, \bar{m}), X) ?$ constructed
as follows.

Let $g_0$ be the index for the Kleene recursive in partial function $\{ g_0 \}=\{ g_0 \}^{\me} $ of Lemma \ref{Kconvergence}. We use that in the following.

Recall that $\{ g_0 \}  (h, (\bar{e}, \bar{m}), X, k)$ will simulate a run of computation of  \linebreak $\{
h \}  ((\bar{e}, \bar{m}), X) $ for $k$ steps.  If after the $k$ steps are completed this simulation of $\{
h \}  ((\bar{e}, \bar{m}), X) $ has halted, then  $\{ g_0 \}  (h,
(\bar{e}, \bar{m}), X, k)\da 0$. Otherwise, and if illfoundedness of \linebreak $\mathfrak{T}^\me( (h ,  \bar{e}, \bar{m}), X)$ has not been encountered, we have that $\{ g_0 \}  (h, (\bar{e},
\bar{m}), X, k)\da 1$. 

By the Kleene Recursion Theorem, let $\bar{h}$ be such that $F (\bar{h}, e, m,
X) \simeq \linebreak \{ \bar{h} \}  ((e, m), X).$

\

{\tmem{Claim}}: \ For any $X \in \can$ any $e, m \in \omega$

\nod (2){\hspace{9em}}$\{ \bar{h} \} ^\me ((e, m), X) \simeq \pa{e}^X (m)$.

In particular

\nod (3)\quad$\ex k$ $\{ \bar{h} \} ^\me ((e, m), X)\da$ in $k$ steps $\Equi  \ex k
\,\,\pa{e}^X (m)\da$ in $k$ steps.

\

{\pf} By induction on the feedback rank $\rho = \rho^X$. If $\rho (e, m) =
0$, then \ $\pa{e}^X (m)$ makes no query calls, and then neither does $F
(\bar{h}, e, m, X) \simeq \linebreak  \{ \bar{h} \}^\me  ((e, m), X)$, and the two outcomes on
their respective output tapes are identical. Suppose then as inductive
hypothesis that (2), and (3), hold for all $(e', m')$ with $\rho (e', m') <
\alpha$. Let $\rho (e, m) = \alpha$. Then any query $? Q^X (e_i, m_i) ?$
occurring during a run of $\pa{e}^X (m)$ has $\rho (e_i, m_i) < \alpha$. Hence
we have (2) and (3) holding for $((e_i, m_i), X)$.

Now running $F (\bar{h}, e, m, X) = \{ \bar{h} \}  ((e, m), X)$,
 we shall have that, by (3) and definition of $\{
g \} $, that for any query $? Q^X (e_i, m_i) ?$ occurring in $\{ \bar{h} \} 
((e, m), X)$'s simulation of $\pa{e}^X (m)$, that,  with $y= \lambda k. \{ g \}  (\bar{h}, (e_i, m_i), X, k)$:\\

{\em $? Q  (g, \bar{h} $, $(e_i, m_i), X$)$?$ returns $E (y) = 0 / 1$ to
the computation $\{ \bar{h} \}  ((e, m), X)$ if and only if $? Q^X (e_i,
m_i) ?$ returns $\da/\ua$ to the computation $\pa{e}^X (m)$.}\\

As apart from queries, the Turing programmes are the same on both sides, the
outcomes of $\{ \bar{h} \}^\me  ((e, m), X)$ and $\pa{e}^X (m)$ are the same, and
hence (2) and (3) hold for $((e, m), X)$.{\qed} {\tmem{Claim}}

\

For the Lemma's statement we let $k_1$ be some p.r. function so that \\ $\{ k_1
(e) \}  (m, X) \simeq \{ \bar{h} \}  ((e, m), X)$. \qed (Theorem \ref{ftok})

\

Secondly, we have that in fact, feedback computation can emulate the specific
case of Kleene recursion in $^2\mathsf{E}$.

\begin{lemma}
  \label{KtoF}(Kleene to Feedback) There is a p.r. function $k_0 \in \bai$ so that
  for any $X \in 
  \bai$ any $e, m \in \omega$
  $$\pa{k_0 (e)}^X (m) \simeq \{ e \} ^\mathsf{^2 E} (m, X).$$
\end{lemma}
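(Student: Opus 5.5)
We mirror the proof of Lemma \ref{FtoK} in the other direction. We build a feedback procedure $F(h,e,m,X)$ that simulates the run of $P_e^{\me}(m,X)$ step by step. Ordinary Turing instructions and oracle calls to $X$ are copied verbatim. Whenever the simulated Kleene computation reaches a query $?Q(e',m',X)?$ asking for $\me(\lambda k.\{e'\}^{\me}((m',k),X))$, $F$ replaces it by feedback queries built from the self-index $h$. By the Feedback Recursion Theorem \ref{FCRT} we then take $\bar h$ with $\pa{\bar h}^X((e,m)) \simeq F(\bar h,e,m,X)$, and let $k_0$ be a p.r. function with $\pa{k_0(e)}^X(m)\simeq \pa{\bar h}^X((e,m))$.

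The key construction is how a single $\me$-query is answered. Let $\pa{d}^X((e',m'))$ be the feedback procedure that, for $k=0,1,2,\dots$ in turn, computes $\pa{\bar h}^X((e',(m',k)))$ as a value. It first issues the halting query on it; if the answer is $\da$ it recovers the output by the value-extraction trick of Lemma \ref{L2.12} and the remarks before it. It halts as soon as some value $0$ is found. If for some $k$ the answer is $\ua$, it deliberately freezes by launching self-referential freezing code, as in the proof of $k_1$ in the $\Downarrow/\da$ lemma. Then $F$ issues $?Q$ \emph{Does} $\pa{d}^X((e',m'))\da$ \emph{or} $\ua$? and passes $0$ back to the simulation on $\da$ and $1$ on $\ua$.

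This behaves correctly in all cases:
\begin{itemize}
\item If every $\{e'\}^{\me}((m',k),X)$ converges, then by induction $d$ either halts, exactly when some value is $0$, giving $\me=0$, or runs forever without freezing, giving $\ua$ and $\me=1$.
\item If some $\{e'\}^{\me}((m',k),X)$ diverges, the Kleene query has no answer. Correspondingly, $d$ freezes (either directly or because a subcomputation freezes), so $F$ freezes too.
\end{itemize}
Thus $\pa{\bar h}^X((e,m))$ neither converges nor diverges exactly when the Kleene computation diverges. This matches the claim $\simeq$, read as agreement on convergence and value.

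The verification is by induction on the Kleene rank $\rho^{\me}(e,m,X)$ from Definition \ref{3.8}. The claim is that if $\{e\}^{\me}(m,X)\da n$ then $\pa{\bar h}^X((e,m))\da n$. Each subcall $\gamma_{p,i}$ has smaller rank, so the inductive hypothesis makes every feedback query in the simulation receive the correct answer. Outside queries the two programs are the same Turing program, so their outputs agree.

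Conversely, we show that if $\pa{\bar h}^X((e,m))\da n$ then $\{e\}^{\me}(m,X)\da n$. This is by induction on the feedback rank $\rho^X$, exactly as in the Claim of Lemma \ref{FtoK}. The point is that convergence of the query to $d$, or a $\ua$ answer, forces every $\pa{\bar h}^X((e',(m',k)))$ to have converged with smaller feedback rank.

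The main obstacle is the case split above. Kleene divergence of a subcall must never turn into feedback divergence $\ua$, which would wrongly give the answer $\me=1$. It must instead be forced into freezing, and so for the master computation into non-convergence. The main work is checking that the forced freezing of $d$, together with the unbounded search over $k$, correctly separates the three outcomes: some $y(k)=0$, all $y(k)\neq 0$, and $y$ undefined.
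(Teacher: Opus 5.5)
\textbf{There is a genuine gap in how you answer a single $\me$-query.} Your overall scheme is the paper's: simulate $P_e^{\me}$ verbatim, replace each $\me$-query by halting queries about the self-index from the Feedback Recursion Theorem, and verify by induction on rank. The problem is your procedure $d$. It halts as soon as it finds a $k$ with $y(k)=0$, without ever checking that $y$ is total.

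By the definition of $\Gamma^{\mi}$, a Kleene query $Q(e',m',X)$ returns a value only if $\{e'\}^{\me}((m',k),X)$ converges for \emph{every} $k$. So your second bullet is false. Suppose $\{e'\}^{\me}((m',0),X)\da 0$ but $\{e'\}^{\me}((m',1),X)\ua$.
\begin{itemize}
\item The Kleene query gets no answer, so $\{e\}^{\me}(m,X)$ diverges.
\item Your $d$ halts at $k=0$. It never reaches $k=1$, so it never freezes.
\item $F$ therefore receives $\da$, feeds $0$ back, and the simulation may go on to halt.
\end{itemize}
Thus $\pa{k_0(e)}^X(m)\da$ while $\{e\}^{\me}(m,X)\ua$. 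Your converse argument breaks at exactly the sentence claiming that convergence of $d$ forces every $\pa{\bar h}^X((e',(m',k)))$ to have converged. That holds only for the $k$ up to the first zero. The three outcomes you list at the end are also not disjoint: ``some $y(k)=0$'' is compatible with ``$y$ undefined''. Your forward direction, by induction on Kleene rank, is fine.

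\textbf{The missing idea.} No halting search can certify that $y$ is total. Totality must be certified by the \emph{divergence} answer of a separate query, asked before any zero is acted on. This is what the paper does, and it explicitly remarks that a single zero-searching query fails for this reason. The repair is a two-stage query:
\begin{enumerate}
\item $F$ first asks about a procedure $P$ which, for $k=0,1,2,\ldots$, asks whether $\pa{\bar h}^X((e',(m',k)))$ converges or diverges. It moves to $k+1$ on $\da$ and deliberately freezes on $\ua$. If some subcall freezes, $P$'s own query goes unanswered, so $P$ freezes. Hence $P$ never halts, the only possible answer about $P$ is $\ua$, and that answer arrives exactly when every $y(k)$ is defined. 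Otherwise $F$ freezes, as it should.
\item Only after receiving $\ua$ does $F$ ask about the zero-search procedure; your $d$ can serve here. It now cannot freeze, and its $\da/\ua$ answer gives $\me(y)=0/1$.
\end{enumerate}
With this two-stage query in place of $d$, the rest of your argument goes through.
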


{\pf}We define a f.~semi-comp. in $X$ function $F (\bar{e}, e, m) = F^X
(\bar{e}, e, m)$ which acts as follows.

$F$ runs as $\{ e \}  ^\mathsf{ E} (m, X)$ using the underlying Turing program until a
query $Q^{\mathsf{E}}  (e_0, m_0, X)$ is reached. Recall that this Kleene query then computes \linebreak  $\{e_0\}^\mathsf{E}((m_0,k),X)\da
y(k)$ for increasing $k=0,1,\ldots$ 

\

Now let $P$ be the f.c. procedure that queries $? \la \bar{e} \ra^X
(( e_0, ( m_0 , k))) \da / \ua$? in turn,
starting with $k = 0$, then by incrementing $k : = k + 1$ when it learns
that $\la \bar{e} \ra^X (( e_0, ( m_0 , k)))
\da$. If for some (least) $k_0$, $\la \bar{e} \ra^X (e_0, (
m_0 , k_0) ) \ua$,  $P$ then, on receipt of this information, invokes some
self-referential program for example, in order to ensure that $P\Uparrow$, that is,
becomes undefined.\\

$F$ then asks the following query (still in the language of feedback
computation):\\

$Q_1$``{\tmem{ If }}$P${\tmem{ is the f.c. procedure described above, does}}
$P \da /\ua$?''\\

If, for some $k$, $\la \bar{e} \ra^X (( e_0, ( m_0
, k))) $ $\Uparrow$, then the query remains unanswered. If
for some $k$ $\la \bar{e} \ra^X (( e_0, ( m_0 ,
k))) \ua$  then we have also ensured that $F (\bar{e}, e, m) \Uparrow$
as no response is received in $Q_1$. Otherwise $P$ cannot halt.

However, on the answer ``$\ua$'' to $Q_1 \tmop{we}$ then know that $\la \bar{e}
\ra^X (( e_0, ( m_0 , k)))\downarrow$ for all $k$. $F$ then
proceeds to ask:

\

$Q_2$ ``{\tmem{Let $P_1$ be the f.c. procedure that computes $\la \bar{e}
\ra^X (( e_0, ( m_0 , k)))\da y (k$) and so that}}

{\tmem{$P_1\da 0$, {\ie}halts with $0$, if }}$ y (k) = 0$.

\tmem{Otherwise if $y (k) \neq 0$, $P_1$ increments $k : = k + 1$ and then
computes }

$\la \bar{e} \ra^X (( e_0, ( m_0 , k + 1)))
= {y(k+1)}$; \tmem{ again halting if } $y(k+1)=0$ { \em and otherwise returning and incrementing to } $k+2$ \etc {\em Does }$P_1 \da / \ua $?''

\

Comment: If the answer to $Q_2$ is ``$\ua$'', then, setting $y = (\vec{y (k)})$, we have that $E (y) = 1. \quad \tmop{If} Q_2$ answers
``$\da$'' then $E (y)$ would be $0$, if we calculated all the values $y (k),$ and
hence these f.~c. responses mirror those from  of the Kleene recursion.

(At first glance $P $ might look superfluous: why does $P_1$ as a query not
suffice? However we might have $\la \bar{e} \ra^X (( e_0, (
m_0 , k))) \da 0$ whilst $\la \bar{e} \ra^X (( e_0,
( m_0 , k + 1))) {\Uparrow}$. We do need to
ask first whether $\la \bar{e} \ra^X (( e_0, ( m_0
, j))) \da $ for all $j$.)

After this, $F$ runs as $\{ e \}  ^\mathsf{ E} (m, X)$ using the underlying Turing program until the next 
query (if any) $Q^{\mathsf{E}}  (e_1, m_1, X)$ is reached. It then acts as above asking the same queries now about the pair $e_1,m_1$. And so forth.

\

By the Feedback Recursion Theorem there is $\tilde{e}$ so that $F (\tilde{e}, e, m)
= \la \tilde{e} \ra^X (e , m)$. We then take $k_0$ so that for any $e, \pa{k_0
(e)}^X (m) = \la \tilde{e} \ra^X (e , m)$.

\

\nod{\tmem{Claim:}} $k_0$ {\tmem{satisfies the Lemma.}}

\

{\pf}Note first that $\pa{k_0
(e)}^X (m) $ halts if and only if $\{ e \} ^\mathsf{^2 E} (m, X)$ does, The reason being that, by design, the  $\pa{k_0
(e)}^X (m) $ computation is getting the same information from its oracle for $\{\da,\ua\}$, once identifying these with $\{0,1\}$ respectively, as the  $\{ e \} ^\mathsf{^2 E} (m, X)$ does. The rest of the Turing instructions are the same.  Hence the identical halting behaviour.

 The {\em Claim } can be proven by induction on the Kleene rank $\rho^\me $. If $\rho ^{\me}
(e, m, X)$ is zero, then $\la \tilde{e} \ra^X (e , m)$ makes no query calls
and so the lemma holds easily; if, as an inductive hypothesis, the lemma holds
for all $e_0, \vec{m}_0, X$ with $\rho ^{\me} (e_0, \vec m_0 , X) <
\alpha$, and $\rho^{\me}  (e, m, X) = \alpha$, then all query calls of the form $Q ^{\me}
(e_0, (m_0, k), X)$ occurring in $\{ e \} ^\mathsf{^2 E} (m, X)$, are faithfully translated from being about $\{ e_0 \} ^{\me}
((m_0, k), X)$ to being about  $\pa{k_0 (e_0)}^X ((m_0, k))$, as the former are
of smaller Kleene rank than $\alpha$. \ If for some $k$, $\{ e_0 \} ^{\me} ((m_0,
k), X)\ua$ then so does $\la \tilde{e} \ra^X (e_0, ( m_0 ,
k$))$\Uparrow$, and both $\pa{e}^X (m) $ and $ \{ e \} ^{\me} (m, X)$ are
undefined. Otherwise by the reasoning of the last paragraph, for all $k$ $\la \tilde{e} \ra^X
(e_0, ( m_0 , k)) \simeq \{ e_0 \}^\me  ((m_0, k), X)$, with
both sides defined.{\qed} (Theorem \ref{ktof})

\bibliographystyle{plain}
\bibliography{settheory10v}

\ed

\end{document}